\theoremstyle{definition}
\newtheorem{thm}{Theorem}[section]
\newtheorem{prop}[thm]{Proposition}
\newtheorem{cor}[thm]{Corollary}
\newtheorem{lemma}[thm]{Lemma}
\newtheorem*{remark}{Remark}
\numberwithin{equation}{section}
\newcommand{\p}{\partial}
\renewcommand{\l}{\left}
\renewcommand{\r}{\right}
\renewcommand{\det}[1]{~\mbox{det}\left(#1\right)}
\newcommand{\eps}{\varepsilon}
\newcommand{\mR}{\mathbb{R}}
\newcommand{\abs}[1]{\lvert #1 \rvert}
\newcommand{\norm}[1]{\lVert #1 \rVert}
\newcommand{\br}[1]{\langle #1 \rangle}
\newcommand{\mA}{\mathcal{A}}
\newcommand{\mS}{\mathscr{S}}
\newcommand{\mdiv}{\mathrm{div}}
\title[$p$-harmonic coordinates for H\"older metrics]{$p$-harmonic coordinates for H\"older metrics and applications}
\author{Vesa Julin}
\address{Department of Mathematics and Statistics, University of Jyv\"askyl\"a}
\email{vesa.julin@jyu.fi}
\author{Tony Liimatainen}
\address{Department of Mathematics and Statistics, University of Jyv\"askyl\"a}
\email{tony.t.liimatainen@jyu.fi}
\author{Mikko Salo}
\address{Department of Mathematics and Statistics, University of Jyv\"askyl\"a}
\email{mikko.j.salo@jyu.fi}
\begin{document}

\begin{abstract}
We show that on any Riemannian manifold with H\"older continuous metric tensor, there exists a $p$-harmonic coordinate system near any point. When $p=n$ this leads to a useful gauge condition for regularity results in conformal geometry. As applications, we show that any conformal mapping between manifolds having $C^{\alpha}$ metric tensors is $C^{1+\alpha}$ regular, and that a manifold with $W^{1,n} \cap C^{\alpha}$ metric tensor and with vanishing Weyl tensor is locally conformally flat if $n \geq 4$. The results extend the works \cite{LS_MRL, LS_Bach} from the case of $C^{1+\alpha}$ metrics to the H\"older continuous case. In an appendix, we also develop some regularity results for overdetermined elliptic systems in divergence form.
\end{abstract}

\maketitle

\vspace{-5pt}

\section{Introduction} \label{sec_intro}

Various regularity results in Riemannian geometry can be achieved via the use of harmonic coordinates. If $(M,g)$ is a Riemannian manifold, a local coordinate system $x = (x^1,\ldots,x^n)$ is called harmonic if 
\[
\Delta_g x^j = 0, \qquad 1 \leq j \leq n,
\]
where $\Delta_g$ is the Laplace-Beltrami operator. The notion of harmonic coordinates goes back to Einstein \cite{Einstein}, and these coordinates have the property that the Ricci and Riemann curvature tensors become elliptic operators when written in harmonic coordinates \cite{DeTurck_Kazdan}. The Ricci and Riemann tensors are invariant under diffeomorphisms, and fixing a harmonic coordinate system may be thought of as a gauge condition  which makes these operators elliptic. Harmonic coordinates are useful when studying regularity of isometries or flatness of low regularity metrics \cite{Taylor}. Also, the DeTurck trick for local wellposedness of the Ricci flow \cite{ChowKnopf} is related to harmonic coordinates, see \cite{GrahamLee}.

Harmonic coordinates are well suited to studying regularity results in Riemannian geometry, due to the fact that harmonic functions are preserved by isometries. In this paper we are interested in regularity results in conformal geometry. For this purpose it is useful to have a gauge condition which is preserved by conformal transformations. Such a condition was introduced in the recent work \cite{LS_MRL}, which 
established the existence of $p$-harmonic coordinates on any smooth manifold with $C^s$, $s > 1$, Riemannian metric. If $p=n$, the corresponding $n$-harmonic gauge condition is conformally invariant, and \cite{LS_MRL} applied $n$-harmonic coordinates to give a new proof of certain regularity results for conformal mappings. The subsequent article \cite{LS_Bach} showed that $n$-harmonic coordinates provide a gauge condition for ellipticity of conformal curvature tensors, and characterized local conformal flatness of manifolds of dimension $\geq 4$ having $C^s$, $s > 1$, metric tensor.

The aim of the present article is to extend the results of \cite{LS_MRL} to $C^s$, $s > 0$, metric tensors and the local conformal flatness characterization of \cite{LS_Bach} to $W^{1,n} \cap C^s$, $s > 0$,  metric tensors. Let us state the desired results in more detail.

In this paper $M$ will be a smooth ($=C^{\infty}$) manifold of dimension $n \geq 2$, and $g$ will be a Riemannian metric on $M$ with finite regularity. If $1 < p < \infty$, the $p$-Laplace operator on $(M,g)$ is the operator $u \mapsto -\delta(\abs{du}^{p-2} du)$ where $d$ is the exterior derivative, $\delta$ is the codifferential and $\abs{du}$ is the $g$-norm of the cotangent vector $du$. This operator is given in local coordinates by 
\begin{equation} \label{p_harm_eq}
-\delta(\abs{du}^{p-2} du) = \abs{g}^{-1/2} \partial_j (\abs{g}^{1/2} g^{jk} (g^{ab} \partial_a u \partial_b u)^{\frac{p-2}{2}} \partial_k u)
\end{equation}
where $(g_{jk})$ is the metric in local coordinates, $(g^{jk})$ is the inverse matrix of $(g_{jk})$, and $\abs{g} = \mathrm{det}(g_{jk})$. Throughout the paper we use the Einstein summation convention, where a repeated index in upper and lower position is summed from $1$ to $n$. If $p=2$, \eqref{p_harm_eq} is the usual Laplace-Beltrami operator, whereas for $p \neq 2$ the $p$-Laplace operator is a quasilinear degenerate elliptic operator. We refer to \cite{HKM, Lindqvist} for more details on $p$-Laplace and more general $\mA$-harmonic operators.

We will state our results in terms of $C^s$ spaces for $s \geq 0$. If $s$ is a nonnegative integer, we understand that $C^s(\mR^n)$ is the space of functions in $\mR^n$ whose derivatives up to order $s$ are bounded and continuous. If $s = k+\alpha$ where $k \geq 0$ is an integer and $0 < \alpha < 1$, then $C^s = C^{k+\alpha}$ is the usual H\"older space with norm 
\[
\norm{f}_{C^{k+\alpha}} = \sum_{\abs{\gamma} \leq k} \norm{\partial^{\gamma} f}_{L^{\infty}(\mR^n)} + \sum_{\abs{\gamma} = k} \sup_{x \neq y} \frac{\abs{\partial^{\gamma} f(x) - \partial^{\gamma} f(y)}}{\abs{x-y}^{\alpha}}.
\]
We will also consider the Zygmund spaces $C^s_*(\mR^n)$ for $s \in \mR$, which are defined via the norm 
\[
\norm{f}_{C^s_*} = \sup_{j \geq 0} 2^{js} \norm{\psi_j(D) f}_{L^{\infty}(\mR^n)}
\]
where $(\psi_j(\xi))_{j=0}^{\infty}$ is a Littlewood-Paley partition of unity. One has 
\[
C^s = C^s_*, \qquad \text{$s > 0$ not an integer},
\]
and $C^k \subset C^k_*$ if $k$ is a nonnegative integer. Most often we will use local versions of the above spaces, and in local coordinate statements will write $C^s$ instead of $C^s_{\mathrm{loc}}$ etc. See \cite{Taylor3} for more details on the above spaces.

The first main theorem states that whenever $(M,g)$ is a Riemannian manifold with $C^s$ metric tensor where $s > 0$, then near any point of $M$ there exist local coordinates all of whose coordinate functions are $p$-harmonic. This extends a result from \cite{LS_MRL} from the case $s > 1$ to $s > 0$. 

\begin{thm}[$p$-harmonic coordinates] \label{mainthm_pharmonic_coordinates}
Let $(M,g)$ be a Riemannian manifold whose metric is of class $C^s$, $s>0$, in a local coordinate chart about some point $x_0 \in M$. Let also $1 < p < \infty$. There exists a local coordinate chart near $x_0$ whose coordinate functions are $p$-harmonic and have $C^{s+1}_*$ regularity, and given any $\eps > 0$, one can arrange so that metric satisfies $\abs{g_{jk}(x_0) - \delta_{jk}} <  \eps$ for $j,k=1,\ldots,n$. Moreover, all $p$-harmonic coordinates near $x_0$ have $C^{s+1}_*$ regularity.
\end{thm}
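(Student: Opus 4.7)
After a linear change of coordinates we may assume $g_{jk}(x_0) = \delta_{jk}$ on some small Euclidean ball around $x_0$, which we identify with the origin, and we write $y = (y^1,\ldots,y^n)$ for the ambient Euclidean coordinates. For each index $j$ and each small radius $r>0$, let $u^j_r$ be the minimizer of $v \mapsto \int_{B_r} \abs{dv}_g^p \, dV_g$ over $v \in W^{1,p}(B_r)$ with boundary values $v = y^j$ on $\partial B_r$. Existence follows by the direct method of the calculus of variations, and $u^j_r$ is a weak solution of the $p$-harmonic equation \eqref{p_harm_eq}.

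The central step is to show that for all sufficiently small $r$, the map $u_r = (u^1_r,\ldots,u^n_r)$ is a valid coordinate chart near $x_0$ whose Jacobian at $x_0$ is close to the identity. I would carry this out by a blow-up argument. Define the rescaled functions $v^j_r(y) = r^{-1} u^j_r(x_0 + ry)$ on the unit ball $B_1$. Then $v^j_r$ is $p$-harmonic for the rescaled metric $g^r_{ab}(y) = g_{ab}(x_0 + ry)$, with boundary values $v^j_r(y) = y^j$, and $g^r \to \delta$ uniformly on $B_1$ as $r \to 0$. By the interior $C^{1,\alpha}$ regularity theory for $p$-Laplace type equations (DiBenedetto, Tolksdorf, Lieberman), which requires only $L^{\infty}$ bounds on the coefficient metric, the family $(v^j_r)$ is precompact in $C^1_{\mathrm{loc}}(B_1)$. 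Any subsequential limit $w^j$ is $p$-harmonic for the Euclidean metric on $B_1$ with linear boundary data $y^j$; by strict convexity of the Euclidean $p$-energy, the unique such minimizer is the affine function $w^j(y) = y^j$. It follows that the Jacobian $(\partial v^j_r/\partial y^i)$ converges to the identity uniformly on $B_{1/2}$, and hence for $r$ small the map $u_r$ is a diffeomorphism onto its image and the transformed metric satisfies $\abs{g_{jk}(x_0) - \delta_{jk}} < \eps$ for any prescribed $\eps>0$.

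Once the non-degeneracy $\abs{D u^j_r} \geq c > 0$ has been established near $x_0$, the $p$-Laplace equation is uniformly elliptic along $u^j_r$. Rewriting \eqref{p_harm_eq} in non-divergence form gives a quasilinear equation $a^{ik}(x,Du) \partial_i \partial_k u + b(x,Du) = 0$ whose coefficients are smooth in $Du$ (away from zero) and depend on $g$, hence are $C^s$ in $x$. A Schauder-type bootstrap in Hölder–Zygmund spaces, as formulated in \cite{Taylor3}, then yields $u^j_r \in C^{s+1}_*$; the Zygmund formulation is what gives a clean statement when $s$ is a positive integer. The same argument applies to any $p$-harmonic coordinate system near $x_0$, since its Jacobian is automatically nondegenerate. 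The main obstacle I expect is the blow-up step: it requires uniform interior $C^{1,\alpha}$ bounds for $v^j_r$ depending only on the ellipticity constants of $g^r$ (so that no derivative of $g$ enters the compactness argument), together with the uniqueness fact for the flat $p$-Laplace equation with affine boundary data. Once these are in place, the rest of the argument is a standard elliptic bootstrap.
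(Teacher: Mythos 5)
Your blow-up argument for the nondegeneracy of the Jacobian is a legitimate alternative to the paper's route (the paper instead proves a quantitative comparison estimate, Lemma \ref{prop_two_equations_close}, between the solution and the frozen-coefficient solution $v^j(x)=x^j$, which gives $\norm{\nabla u^j - e_j}_{L^\infty} \leq C r^{2\alpha\beta/(n+2\beta)}$ directly). One caveat there: interior $C^{1,\beta}$ estimates for $\mdiv\,\mA(x,\nabla u)=0$ do \emph{not} hold for merely $L^\infty$ dependence of $\mA$ on $x$ — Hölder (or Dini) continuity in $x$ is needed for gradient regularity. Your compactness step survives anyway because the rescaled metrics $g^r(y)=g(x_0+ry)$ have Hölder seminorm $r^\alpha[g]_{C^\alpha}\to 0$, so the estimates are uniform in $r$; but the justification as you state it is wrong, and this uniformity-under-scaling is precisely what Proposition \ref{Aharmonic_regularity} of the paper is set up to record.

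The genuine gap is in your final regularity step, and it is exactly the difficulty this paper exists to overcome. For $0<s<1$ the nondivergence rewriting $a^{ik}(x,Du)\partial_i\partial_k u + b(x,Du)=0$ is not available: the lower-order term is $b(x,Du)=\partial_{x_i}\mA^i(x,Du)$, which requires differentiating the metric in $x$, and a $C^\alpha$ metric has no such derivative (nor is a $p$-harmonic function for a $C^\alpha$ metric known to be $W^{2,2}$, which one would need even to interpret the identity weakly). The nondivergence Schauder bootstrap works only for $s>1$ (this is the content of \cite{LS_MRL}) and, with extra work, for $s=1$. For $s\in(0,1)$ the paper keeps the equation in divergence form and proves $C^{1+\alpha}$ regularity of $u$ on the set where $\nabla u\neq 0$ by a Campanato iteration (Propositions \ref{campanato} and \ref{Aharmonic_regularity_nonvanishing}): on each small ball one compares $\nabla u$ in $L^2$ with the gradient of the frozen-coefficient solution, uses the decay estimate \eqref{camp for v} for the constant-coefficient problem, and applies Lemma \ref{lemma-algebra}. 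Without an argument of this type your proof establishes only the $C^{1+\beta}$ regularity with the small, non-optimal exponent $\beta$ coming from the DiBenedetto--Tolksdorf theory, not the claimed $C^{s+1}_*$ regularity.
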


The next result states that if $\phi$ is a $1$-quasiregular map between two manifolds with $C^s$ metric tensors where $s > 0$ (in particular $\phi$ could be a $C^1$ conformal map), then $\phi$ is in fact a $C^{s+1}$ local conformal diffeomorphism. This extends the results of \cite{Iwaniec_thesis, LS_MRL} from the case $s > 1$ to $s > 0$. The earlier result \cite{Ferrand} considered $C^{\infty}$ Riemannian metrics, and \cite{Taylor, CapognaLeDonne} give analogous results for Riemannian and subRiemannian isometries.

\begin{thm}[Regularity of conformal maps] \label{mainthm_conformal_regularity}
Let $(M,g)$ and $(N,h)$ be Riemannian manifolds, $n\geq 3$, where $g, h\in C^s$ for some $s>0$, $s \neq 1$. Let $\phi: M\rightarrow N$ be a non-constant mapping. Then the following are equivalent:
\begin{align}
 &\phi  \mbox{ is a Riemannian $1$-quasiregular mapping}, \label{1qc_anal} \\
 &\phi  \mbox{ is locally bi-Lipschitz and } \phi^*h=c\,g \mbox{ a.e.,}  \label{weak_form} \\ 
 &\phi  \mbox{ is a local $C^1$ diffeomorphism and } \phi^*h=c\,g, \label{1dif} \\
 &\phi  \mbox{ is a local $C^{s+1}$ diffeomorphism and } \phi^*h=c\,g. \label{rdif}
\end{align}
\end{thm}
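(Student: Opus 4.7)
The plan is to establish the cycle $(4) \Rightarrow (3) \Rightarrow (2) \Rightarrow (1) \Rightarrow (2) \Rightarrow (4)$, with essentially all the new content of the theorem carried by the final implication. The implications $(4) \Rightarrow (3) \Rightarrow (2) \Rightarrow (1)$ are essentially definitional: a $C^{s+1}$ diffeomorphism is a $C^1$ diffeomorphism; a $C^1$ diffeomorphism with $\phi^*h = cg$ is locally bi-Lipschitz; and a locally bi-Lipschitz map satisfying $\phi^*h = cg$ a.e.\ lies in $W^{1,n}_{\mathrm{loc}}$ with $\abs{D\phi}^n \leq C\,J_\phi$ a.e., which is the definition of a Riemannian $1$-quasiregular mapping.

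For $(1) \Rightarrow (2)$ I would invoke the Reshetnyak-type theory of $1$-quasiregular mappings for continuous metric tensors. In dimension $n \geq 3$, a non-constant $1$-quasiregular map between Riemannian manifolds with continuous metrics is locally bi-Lipschitz and satisfies $\phi^*h = cg$ a.e. This step was carried out in \cite{LS_MRL} under the assumption $s > 1$, but the argument only requires continuity of the metric tensors, so it applies verbatim in the range $s > 0$.

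The main implication is $(2) \Rightarrow (4)$, which is where Theorem~\ref{mainthm_pharmonic_coordinates} enters. Fix $x_0 \in M$ and set $y_0 = \phi(x_0)$. By Theorem~\ref{mainthm_pharmonic_coordinates} applied on $(N,h)$, choose $n$-harmonic coordinates $y = (y^1,\ldots,y^n)$ near $y_0$; these form a $C^{s+1}_*$ chart with non-vanishing Jacobian at $y_0$. Set $u^j := y^j \circ \phi$. Since $\phi$ is locally bi-Lipschitz and each $y^j$ is $C^1$, $u^j$ is locally Lipschitz and hence lies in $W^{1,n}_{\mathrm{loc}}(M)$. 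In dimension $n$, the integrand $\abs{dw}_g^{n-2}\,\la dw, d\varphi\ra_g\,dV_g$ is invariant under the conformal rescaling $g \mapsto cg$ (the powers $c^{-(n-2)/2}\cdot c^{-1}\cdot c^{n/2}$ cancel). Combining this pointwise invariance with the identity $\phi^*h = cg$ a.e.\ and the change-of-variables formula for the bi-Lipschitz map $\phi$, one verifies that for every Lipschitz test function $\varphi$ compactly supported near $x_0$,
\[
\int_M \abs{du^j}_g^{n-2}\,\la du^j, d\varphi\ra_g\,dV_g \,=\, \int_N \abs{dy^j}_h^{n-2}\,\la dy^j, d\tilde\varphi\ra_h\,dV_h \,=\, 0,
\]
where $\tilde\varphi = \varphi \circ \phi^{-1}$ is Lipschitz and compactly supported near $y_0$, and the last equality uses that $y^j$ is weakly $n$-harmonic on $(N,h)$ (and that Lipschitz test functions suffice). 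Thus $u^j$ is a weak $n$-harmonic function on $(M,g)$, and the regularity conclusion of Theorem~\ref{mainthm_pharmonic_coordinates} promotes $u^j$ to $C^{s+1}_*$. Writing $\phi = y^{-1} \circ (u^1,\ldots,u^n)$, where $y^{-1}$ is itself $C^{s+1}_*$ with non-vanishing Jacobian, yields $\phi \in C^{s+1}_*$ near $x_0$; for the stated range $s > 0$, $s \neq 1$, this coincides with $C^{s+1}$, and $\phi$ is a local diffeomorphism by the inverse function theorem.

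The principal obstacle is justifying the conformal invariance identity in the display above at the merely bi-Lipschitz regularity available in (2). Beyond the pointwise calculation, one must handle the change-of-variables formula for a bi-Lipschitz (not yet $C^1$) map, the pointwise identity $\phi^*h = cg$ holding only almost everywhere, and the fact that Lipschitz compactly supported functions are admissible test functions in the weak formulation of the $n$-Laplace equation. A secondary technical point is the identification of $C^{s+1}_*$ with $C^{s+1}$ when $s$ is a positive integer $\geq 2$; this can be handled by a brief additional bootstrap once the first round of regularity yields $\phi \in C^1$.
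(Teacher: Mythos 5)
Your overall architecture is the same as the paper's: pull back $n$-harmonic coordinates $v$ from the target, show $u = v\circ\phi$ is $n$-harmonic on the source via the conformal invariance of the $n$-Dirichlet integrand, and write $\phi = v^{-1}\circ u$ with both factors controlled by Theorem \ref{mainthm_pharmonic_coordinates}. The equivalence of \eqref{1qc_anal}--\eqref{1dif} is indeed handled exactly as you say, by reference to the corresponding argument in \cite{LS_MRL}.

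There is, however, a genuine gap at the integer values $s = 2,3,4,\ldots$, which the theorem statement covers (only $s=1$ is excluded). You assert that ``for the stated range $s>0$, $s\neq 1$, [$C^{s+1}_*$] coincides with $C^{s+1}$''; this is false when $s$ is an integer, since $C^{k}\subsetneq C^{k}_*$ for integers $k$, and Theorem \ref{mainthm_pharmonic_coordinates} only delivers $C^{s+1}_*$ regularity of the coordinates (moreover $C^{s+1}_*$ is not closed under composition in a way that lets you conclude $v^{-1}\circ u \in C^{s+1}$). Your proposed repair --- ``a brief additional bootstrap once the first round yields $\phi\in C^1$'' --- does not close: the natural bootstrap uses the Christoffel transformation formula $\partial_i\partial_j\phi^m = \Gamma_{ij}^k(cg)\,\partial_k\phi^m - \Gamma_{kl}^m(h)|_\phi\,\partial_i\phi^k\partial_j\phi^l$, but the conformal factor $c = g^{-1}\phi^*h$ involves $D\phi$ and so is a priori only $C^{s-\eps}$, giving $\Gamma(cg)\in C^{s-1-\eps}$ and hence only $\phi\in C^{s+1-\eps}$; the loop never terminates at $C^{s+1}$. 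The paper's proof (Theorem \ref{thm_c1_conformal_regularity}) closes it by first proving that $c$ is exactly $C^s$: one applies the conformal transformation laws \eqref{cnfcng} for the scalar curvature and the Ricci tensor, interpreted distributionally, to $\phi^*h = e^{2f}g$, solves for $\Delta f$ and then for $\partial_{ij}f$ to get $f\in C^s$, and only then runs the Christoffel bootstrap. This step is precisely the ``lapse'' in \cite[Theorem 3.1]{LS_MRL} that the present paper was written to repair, so it cannot be waved away as a secondary technical point.
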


The third result states that the vanishing of the Weyl tensor on a manifold with $W^{1,n} \cap C^{\alpha}$ metric tensor is equivalent with local conformal flatness. The Weyl tensor $W(g)$ of $(M,g)$ is the $4$-tensor 
\[
W_{abcd} = R_{abcd} + P_{ac} g_{bd} - P_{bc} g_{ad} + P_{bd} g_{ac} - P_{ad} g_{bc}
\]
where $R_{abcd} = g(R(\partial_a, \partial_b)\partial_c, \partial_d)$ is the Riemann curvature tensor, and $P_{ab}$ is the Schouten tensor given by 
\[
P_{ab} = \frac{1}{n-2} \left( R_{ab} - \frac{R}{2(n-1)} g_{ab} \right)
\]
where $R_{bc} = g^{ad} R_{abcd}$ is the Ricci tensor and $R = g^{rs} R_{rs}$ is the scalar curvature. If $n=3$ the Weyl tensor is always zero, and if $n \geq 4$ the Weyl tensor is conformally invariant in the sense that 
\[
W(cg) = c W(g)
\]
if $c$ is a smooth positive function. It is a classical result that for $C^3$ metrics, the vanishing of the Weyl tensor is equivalent with the manifold being locally conformally flat in dimensions $n \geq 4$ (see for instance \cite[Chapter 4]{Aubin}). The question of having a low regularity counterpart for this result has been raised in \cite[Section 2.7]{Iwaniec} and \cite[Section 1]{Martin}. Such a result was proved for $C^s$ metrics with $s > 1$ in \cite{LS_Bach}. Here we improve it to $W^{1,n} \cap C^s$ metrics with $s > 0$, where $W^{k,p}$ is the usual (local) Sobolev space of functions whose derivatives up to order $k$ are (locally) in $L^p$.

\begin{thm}[Local conformal flatness] \label{mainthm_weyl_flatness}
Let $M$ be a smooth $n$-dimensional manifold with $n \geq 4$, and let $g \in W^{1,n} \cap C^{\alpha}$ in some local coordinates near $x_0 \in M$ where $0 < \alpha < 1$. If $W(g) = 0$ near $x_0$, then $g_{ab} = c \delta_{ab}$ for some positive $W^{1,n} \cap C^{\alpha}$ function $c$ in some $n$-harmonic coordinates near $x_0$.
\end{thm}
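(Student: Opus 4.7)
The plan is to combine the $n$-harmonic gauge with elliptic regularity to reduce to the classical smooth case. First, apply Theorem~\ref{mainthm_pharmonic_coordinates} with $p=n$ to obtain $n$-harmonic coordinates $y^{1},\ldots,y^{n}$ near $x_{0}$ of class $C^{\alpha+1}_{*}$, in which $g$ inherits $W^{1,n}\cap C^{\alpha}$ regularity through the $C^{\alpha+1}_{*}$ change of variables. This will be the working gauge in which the conformal curvature equations behave elliptically.

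Second, combine $W(g)=0$ with the gauge condition. As shown in \cite{LS_Bach}, the $n$-harmonic gauge makes the conformal curvature equations elliptic on $g$, and the differential Bianchi identity additionally puts the resulting system in overdetermined divergence form. Because $g\in W^{1,n}$, the Christoffel symbols lie in $L^{n}$ and their pointwise products in $L^{n/2}$, so the weak formulation of the Weyl equation is meaningful at this regularity; this is where the critical Sobolev exponent $W^{1,n}$ (rather than merely $C^{\alpha}$) enters essentially. Applying the regularity theorems from the appendix on overdetermined divergence-form systems, one bootstraps $g$ from $W^{1,n}\cap C^{\alpha}$ successively to $C^{1+\alpha}$, $C^{2+\alpha}$, and so on, until $g$ reaches $C^{3}$ or better in the $y$-coordinates.

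Once $g$ is classically smooth with $W(g)=0$ in dimension $n\geq 4$, the classical theorem (see e.g.\ \cite[Chapter 4]{Aubin}) produces local coordinates $z$ near $x_{0}$ in which $g=\tilde c\,\delta$ for a positive smooth $\tilde c$. By the conformal invariance of the $n$-Laplacian (the $n$-Dirichlet energy $\int|du|^{n}\,dV_{g}$ is conformally invariant in dimension $n$), the coordinate functions $z^{j}$ are themselves $n$-harmonic with respect to $g$: in the $z$-coordinates the Euclidean metric is conformal to $g$ and the $z^{j}$ are trivially $n$-harmonic for the Euclidean metric. Thus $z$ is an $n$-harmonic coordinate system for $g$, and $c:=\tilde c$ has the required $W^{1,n}\cap C^{\alpha}$ regularity.

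The main obstacle is the bootstrap in the second step. The Weyl tensor nominally involves two derivatives of $g$ and so makes sense only distributionally when $g\in W^{1,n}\cap C^{\alpha}$; formulating $W(g)=0$ as a genuine overdetermined elliptic divergence-form system in $n$-harmonic coordinates and extracting the first regularity gain (say from $C^{\alpha}$ to $C^{1+\alpha}$) via the appendix's results is the technical crux. Once this first gain is in hand, the iteration is standard and the final assembly in the third step is then essentially formal.
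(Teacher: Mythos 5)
Your overall architecture (pass to $n$-harmonic coordinates, bootstrap via the appendix's overdetermined elliptic estimates, invoke the classical Weyl theorem for smooth metrics, and note that conformal invariance of the $n$-Laplacian makes the resulting flat coordinates $n$-harmonic) matches the paper, and your last observation usefully fills in a detail the paper leaves implicit. But your second step contains a genuine error: you bootstrap $g$ itself to $C^{3}$. The $n$-harmonic gauge alone does \emph{not} make $W(g)=0$ elliptic in $g$: the Weyl tensor is conformally invariant, so every conformal rescaling of a solution is again a solution and the conformal factor is a zero mode of the linearization; no elliptic estimate can regularize it. Indeed, any $W^{1,n}\cap C^{\alpha}$ conformal multiple of a smooth conformally flat metric is a counterexample to the claim that $g\in C^{3}$. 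The paper fixes the conformal gauge by passing to the determinant-normalized metric $\hat g_{ab}=\abs{g}^{-1/n}g_{ab}$ (Lemma \ref{lemma_weyl_elliptic}); it is $\hat g$, not $g$, that satisfies an overdetermined elliptic divergence-form system and bootstraps to $C^{\infty}$, while $g=\abs{g}^{1/n}\hat g$ retains only $W^{1,n}\cap C^{\alpha}$ regularity --- which is exactly why the theorem concludes only $c\in W^{1,n}\cap C^{\alpha}$ rather than $c\in C^{\infty}$.

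A second, smaller gap: you correctly flag the first regularity gain as the crux but do not supply it, and it does not follow from the appendix's statements alone. The nonlinearity $B(u,Du)$ is quadratic in $Du$, so $Du\in L^{n}$ gives only $B\in L^{n/2}$, and the Calder\'on--Zygmund estimate returns $u\in W^{2,n/2}$, i.e.\ $u\in W^{1,n}$ again by Sobolev embedding --- the iteration is stuck at the critical exponent. The paper's claim \eqref{wont_claim} breaks this deadlock by interpolating $W^{2,t/2}$ against $C^{\alpha}\subset W^{\eps,q}$, producing a definite gain $\delta=\frac{\alpha}{1-\alpha}\frac{n}{4}$ in the integrability exponent at each step; this is precisely where the H\"older hypothesis enters and must be supplied for your bootstrap to start. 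With these two repairs (normalize the determinant before bootstrapping, and carry out the critical-exponent interpolation), your third step goes through as written.
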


In three dimensions, local conformal flatness of sufficiently regular metrics is characterized by the vanishing of the Cotton tensor. An analogue of Theorem \ref{mainthm_weyl_flatness} for $n=3$ was proved in \cite{LS_Bach} for $C^s$ metrics with $s > 2$. It is likely that the regularity assumption could be improved slightly, but we will not consider this here.

We also remark that there are alternative gauge conditions for ellipticity of conformal curvature tensors, based on solving the Yamabe problem locally to achieve constant scalar curvature and using harmonic coordinates. This procedure may require more regularity than the method of $n$-harmonic coordinates presented in this paper, see \cite{LS_Bach} for further discussion.

This paper is organized as follows. Section \ref{sec_intro} is the introduction. In Section \ref{sec_pharmonic_coordinates} we prove Theorem \ref{mainthm_pharmonic_coordinates}, and more generally we show the existence of $\mA$-harmonic coordinates for any $\mA$ satisfying certain structural assumptions including H\"older continuity in $x$. The main point in extending the result of \cite{LS_MRL} from $s > 1$ to $s >0$ is that instead of working with strong solutions and nondivergence form equations, we keep the equations in divergence form and apply Campanato and Schauder estimates  in order to show regularity of weak solutions. In Section \ref{sec_pharmonic_coordinates} we also give an additional $W^{2,q} \cap C^{1+\alpha}$ regularity result for the $p$-harmonic coordinates if $g$ is $W^{1,q} \cap C^{\alpha}$ where $0 < \alpha < 1$ and $q \geq 2$.

Section \ref{sec_regularity_conformal} considers the regularity of conformal mappings and establishes Theorem \ref{mainthm_conformal_regularity}. Given Theorem \ref{mainthm_pharmonic_coordinates}, the argument is analogous to \cite{LS_MRL} and is based on representing the conformal map locally in $n$-harmonic coordinates in the target manifold. We also supply some details related to integer values of $s$ that were missing in \cite{LS_MRL}.

In Section \ref{sec_weyl} we discuss the Weyl tensor for low regularity metrics and prove Theorem \ref{mainthm_weyl_flatness}. This uses the fact proved in \cite{LS_Bach} that the Weyl tensor becomes elliptic in $n$-harmonic coordinates after a conformal normalization. The vanishing of the Weyl tensor can be interpreted as a quasilinear overdetermined elliptic system involving a nonlinearity with quadratic growth, and to deal with this we will require regularity results for overdetermined elliptic systems.

There are many regularity results for square elliptic systems available in the literature, see for instance \cite{Giaquinta, GM}. In the case of sufficiently smooth coefficients, a linear overdetermined elliptic system $Pu = f$ can be reduced to the square system $P^* P u = P^* f$, and regularity results will follow from the corresponding results for square systems. Some results for overdetermined systems with nonsmooth coefficients (mostly in nondivergence form) are given in \cite{Taylor3, TaylorToolsForPDE, LS_Bach}. However, we need to deal with certain systems in divergence form which do not seem to be covered by these results. For this reason, in Appendix \ref{sec_appendix_regularity} we establish the basic Calder\'on-Zygmund and Schauder regularity results for linear overdetermined elliptic systems in divergence form. Complete proofs for these results are included, based on freezing coefficients and Fourier arguments.

\subsection*{Acknowledgements}

V.J.\ was supported by the Academy of Finland grant  268393. T.L.\ and M.S.\ were partly supported by the Academy of Finland (Centre of Excellence in Inverse Problems Research) and an ERC Starting Grant (grant agreement no 307023). M.S.\ was also supported by CNRS, and T.L.\ and M.S.\ would like to acknowledge support from the Institut Henri Poincar\'e Program on Inverse Problems in 2015.

\section{$p$-harmonic coordinates} \label{sec_pharmonic_coordinates}

In this section we will prove Theorem \ref{mainthm_pharmonic_coordinates}. Since this theorem is a local statement, it is sufficient to consider a corresponding result in $\mR^n$. 
Locally, $p$-harmonic functions in $(M,g)$ are solutions of the $\mA$-harmonic equation
\begin{equation} \label{A harmoninen}
\mdiv \,\mA(x,\nabla u) = 0
\end{equation}
in some open subset $\Omega \subset \mR^n$ where 
\begin{equation} \label{A_def}
\mA^j(x,\xi) = \abs{g(x)}^{1/2} g^{jk}(x) (g^{ab}(x) \xi_a \xi_b)^{\frac{p-2}{2}} \xi_k.
\end{equation}
The next result shows the existence of $\mA$-harmonic coordinates for any $\mA$ which satisfies the following  structural assumptions for some $M, \delta > 0$:
\begin{gather}
\mA(x,t\xi) = t^{p-1} \mA(x,\xi) \text{ for $t \geq 0$}, \label{mA_assumptions1} \\
\abs{\partial_{\xi_j} \mA^k(x,\xi)} \leq M \abs{\xi}^{p-2}, \label{mA_assumptionsoneandsome} \\
\abs{\mA(x,\xi) - \mA(y,\xi)} \leq M \abs{x-y}^\alpha \abs{\xi}^{p-1}, \label{mA_assumptionsoneandhalf}\\
(\mA(x,\xi)-\mA(x,\zeta)) \cdot (\xi-\zeta) \geq \delta (\abs{\xi} + \abs{\zeta})^{p-2} \abs{\xi-\zeta}^2. \label{mA_assumptions2}
\end{gather}

\begin{thm}[$\mathcal{A}$-harmonic coordinates]
\label{A-harm-coord}
Let $\Omega$ be an open set in $\mR^n$ and let $1 < p < \infty$ and  $0 < \alpha < 1$. Suppose 
$\mA: \Omega \times \mR^n \to \mR^n$ is a vector field which satisfies  \eqref{mA_assumptions1}--\eqref{mA_assumptions2}.
Given any point $x_0 \in \Omega$, there is an $C^{1+\alpha}$ diffeomorphism $U$ from some neighborhood of $x_0$ onto an open set in $\mR^n$ such that all coordinate functions of $U$ are $\mA$-harmonic. 
Moreover, any $C^1$ diffeomorphism whose coordinate functions are $\mA$-harmonic has this regularity. Given any invertible matrix $S$ and any $\eps > 0$, there is $U$ defined near $x_0$ such that 
$$
\norm{DU(x_0) - S} < \eps.
$$
\end{thm}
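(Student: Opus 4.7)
The plan is to construct $U$ by solving a Dirichlet problem with linear boundary data on a small ball, and then to use regularity theory for divergence-form $\mA$-harmonic equations to show that $U$ is a $C^{1+\alpha}$ diffeomorphism with $DU(x_0)$ as close to $S$ as desired. Given $x_0 \in \Omega$, $\eps > 0$, and an invertible matrix $S$, for each $i=1,\ldots,n$ set $\ell^i(x) = S^i_j(x^j - x_0^j) + x_0^i$. On a small ball $B_r = B_r(x_0)$ I would solve
\[
\mdiv \mA(x, \nabla u^i) = 0 \text{ in } B_r, \qquad u^i - \ell^i \in W^{1,p}_0(B_r).
\]
A weak solution exists and is unique by standard monotone operator theory, using coercivity from \eqref{mA_assumptions1} together with strict monotonicity \eqref{mA_assumptions2}. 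Basic interior $C^{1+\beta}$ regularity for some $\beta > 0$ (possibly much smaller than $\alpha$) then follows from the classical theory of $\mA$-harmonic equations with H\"older coefficients, giving us a starting regularity to work with.

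Next I would establish $C^1$-closeness of $u^i$ to $\ell^i$, which will simultaneously give $\norm{DU(x_0) - S} < \eps$ and set up the linearization used below. Writing $\xi_i = \nabla \ell^i$, the difference $w^i = u^i - \ell^i$ satisfies
\[
\mdiv \bigl(\mA(x, \xi_i + \nabla w^i) - \mA(x, \xi_i)\bigr) = - \mdiv \bigl(\mA(x, \xi_i) - \mA(x_0, \xi_i)\bigr),
\]
where by \eqref{mA_assumptionsoneandhalf} the right-hand side is bounded in $W^{-1,p'}(B_r)$ by $C r^\alpha$. Testing against $w^i$ and using the strong monotonicity \eqref{mA_assumptions2} yields smallness of $\nabla w^i$ in $L^p(B_r)$; rescaling to the unit ball and invoking the interior $C^{1+\beta}$ bounds upgrades this to uniform $C^1$-smallness on $B_{r/2}$. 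Consequently, for $r$ small one has $\norm{DU(x_0) - S} < \eps$, and $DU$ remains invertible in a neighborhood of $x_0$, so $U$ is a local diffeomorphism.

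To obtain the sharp regularity $u^i \in C^{1+\alpha}$, I would linearize the equation. Since $\nabla u^i$ is close to the nonzero vector $\xi_i$, the matrix
\[
a^{jk}(x) = \int_0^1 \p_{\xi_k} \mA^j(x, \xi_i + t \nabla w^i)\,dt
\]
is uniformly elliptic by \eqref{mA_assumptions2} and bounded by \eqref{mA_assumptionsoneandsome}, and $w^i$ satisfies the divergence-form equation $\p_j (a^{jk}(x) \p_k w^i) = -\mdiv \mA(x, \xi_i)$, whose right-hand side is (distributionally) the divergence of a $C^\alpha$ vector field by \eqref{mA_assumptionsoneandhalf}. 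Starting from $a^{jk} \in C^{\min(\alpha,\beta)}$ (the $x$-dependence enters through $\mA$ and through $\nabla w^i \in C^\beta$) and applying divergence-form Schauder/Campanato estimates, a standard bootstrap raises the exponent to $\alpha$, giving $w^i \in C^{1+\alpha}$ and hence $u^i \in C^{1+\alpha}$. The same linearization applied to any $C^1$ diffeomorphism $V$ with $\mA$-harmonic components shows $V \in C^{1+\alpha}$, since invertibility of $DV$ ensures all component gradients are nonvanishing so the linearization is uniformly elliptic on the domain of $V$. The main difficulty is this final bootstrap: for merely H\"older coefficients one cannot pass to a nondivergence-form strong formulation as was done in \cite{LS_MRL}, and everything must be carried out in divergence form via Campanato/Schauder estimates for weak solutions, which is exactly the change of viewpoint emphasized in the introduction.
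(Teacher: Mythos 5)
Your construction (Dirichlet problem on a small ball with affine boundary data, closeness of $\nabla u^i$ to the constant $\nabla \ell^i$ via the energy comparison with the frozen equation interpolated against the uniform $C^{1+\beta}$ bound, then the inverse function theorem) matches the paper's proof, which packages the comparison step as Lemma \ref{prop_two_equations_close}. The problem is the final step, where you upgrade from $C^{1+\beta}$ to $C^{1+\alpha}$ on the set where the gradient does not vanish.

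Your linearization-and-bootstrap argument has a genuine gap. First, the coefficients $a^{jk}(x)=\int_0^1 \partial_{\xi_k}\mA^j(x,\xi_i+t\nabla w^i(x))\,dt$ need not be H\"older at all under the stated hypotheses: \eqref{mA_assumptionsoneandsome} only bounds $\partial_\xi\mA$, and neither H\"older continuity of $\partial_\xi\mA$ in $x$ nor even continuity in $\xi$ is assumed (compare \eqref{mA_assumptionsoneandhalf}, which controls $\mA$ itself, not its $\xi$-derivative). Second, and more fundamentally, even granting $a^{jk}\in C^{\min(\alpha,\beta)}$, the divergence-form Schauder estimate for $\partial_j(a^{jk}\partial_k w)=\partial_j F^j$ with $a\in C^{\gamma}$, $F\in C^{\alpha}$ yields only $\nabla w\in C^{\min(\gamma,\alpha)}$; with $\gamma=\beta<\alpha$ this returns $\nabla w\in C^{\beta}$, i.e.\ exactly what you started with. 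Since the coefficient regularity is tied to the regularity of $\nabla w^i$ itself, the iteration never moves the exponent, and "a standard bootstrap raises the exponent to $\alpha$" has no mechanism behind it. The paper avoids this bottleneck by never linearizing: in Proposition \ref{Aharmonic_regularity_nonvanishing} one compares $u$ on each small ball $B(z,r)$ with the solution $v$ of the \emph{nonlinear} frozen equation $\mdiv\,\mA(z,\nabla v)=0$ with the same boundary values. The error $\mA(x,\nabla u)-\mA(z,\nabla u)$ is controlled by \eqref{mA_assumptionsoneandhalf} alone, giving the full $r^{\alpha}$ gain with no loss from the regularity of $\nabla u$; since $\nabla v$ is nonvanishing, $v$ is smooth and its Campanato excess decays like $(\rho/r)^{n+2}$, and Lemma \ref{lemma-algebra} then gives $\nabla u\in C^{\alpha}$ in one step. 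You should replace your linearization step with this nonlinear frozen-coefficient comparison (or impose, and then verify for \eqref{A_def}, additional regularity of $\partial_\xi\mA$ together with a genuinely different improvement mechanism).
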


We remark that \eqref{mA_assumptions1}--\eqref{mA_assumptions2} imply the conditions (for $x \in \Omega$ and $\xi, h \in \mR^n$) 
\begin{gather*}
\mA(x,\xi) \cdot \xi \geq \delta \abs{\xi}^p, \\
\partial_{\xi_k} \mA^j(x,\xi) h^j h^k \geq \delta \abs{\xi}^{p-2} \abs{h}^2,
\end{gather*}
which appear in the $C^{1 +\beta}$ regularity results for the $\mA$-harmonic equation \eqref{A harmoninen} \cite{Manfredi}.

We follow the argument of \cite{LS_MRL} except for one part of the proof. In \cite{LS_MRL}, we used the fact if $\mA$ is $C^s$ in the $x$ variable where $s > 1$, then an $\mA$-harmonic function whose gradient is nonzero must be $C^{s+1}$. The assumption $s > 1$ was used to first show that the $\mA$-harmonic function is $W^{2,2}$ by a difference quotient argument, and then the $\mA$-harmonic equation was brought into nondivergence form where Schauder estimates could be applied. In this paper, we will work under the assumption $s > 0$ and apply Campanato type estimates and a frozen coefficient argument to the divergence form equation instead.

We recall the following classical result by Campanato. To that aim we denote the mean value of $u$ in a ball  $B(x_0,r)$ by $u_{x_0,r}$. 
The proof can be found in \cite{Giusti}.
\begin{prop}
\label{campanato}
Suppose that $u \in L^2(\Omega)$ satisfies
\[
\int_{B(x,r)} |u-u_{x,r}|^2 \leq C r^{n+ 2 \gamma} \qquad \text{for every }\, B(x,r) \subset \Omega, \,\, r \leq r_0,
\]
for some $\gamma \in (0,1)$ and $r_0>0$. Then $u \in C^\gamma(\Omega)$.
\end{prop}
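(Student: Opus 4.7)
The plan is to reconstruct the Lebesgue representative of $u$ pointwise as the limit of the spherical means $u_{x,r}$ as $r \to 0$, and then to estimate the difference of this representative at two points $x,y$ by comparing their means on a common enclosing ball.

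First I would control how the mean changes when the radius is halved. If $B(x,r) \subset \Omega$ with $r \leq r_0$, then by Cauchy–Schwarz and the hypothesis,
\[
|u_{x,r/2} - u_{x,r}|^2 \leq \frac{1}{|B(x,r/2)|}\int_{B(x,r/2)} |u - u_{x,r}|^2\,dy \leq \frac{C\, 2^n}{|B(x,r)|}\, r^{n+2\gamma} \leq C' r^{2\gamma}.
\]
Iterating and summing the geometric series in $r/2^k$ shows that $(u_{x,r/2^k})_{k\geq 0}$ is Cauchy, with limit $\tilde u(x)$, and moreover
\[
|u_{x,r} - \tilde u(x)| \leq C'' r^{\gamma}
\]
for every admissible ball. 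By the Lebesgue differentiation theorem $\tilde u = u$ almost everywhere, so $\tilde u$ is a legitimate representative of the $L^2$ class.

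Next I would estimate $|\tilde u(x) - \tilde u(y)|$ for $x,y$ in a compact subset $\Omega' \Subset \Omega$ with $d := |x-y|$ small enough. Put $r = 2d$ and choose the enclosing ball $B^* := B(x, 3r)$, which contains both $B(x,r)$ and $B(y,r)$ and lies in $\Omega$ once $d$ is small enough. The same halving argument, applied now to concentric balls of comparable radii, gives $|u_{x,r} - u_{B^*}| \leq C r^{\gamma}$ and $|u_{y,r} - u_{B^*}| \leq C r^{\gamma}$ (one bounds $|u_{x,r}-u_{B^*}|^2$ by $|B(x,r)|^{-1} \int_{B^*} |u - u_{B^*}|^2$, and the integral is $\leq C (3r)^{n+2\gamma}$). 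Thus
\[
|\tilde u(x) - \tilde u(y)| \leq |\tilde u(x) - u_{x,r}| + |u_{x,r} - u_{B^*}| + |u_{B^*} - u_{y,r}| + |u_{y,r} - \tilde u(y)| \leq C''' r^{\gamma} = C'''' d^{\gamma}.
\]
This shows $\tilde u \in C^{\gamma}$ locally and hence $u \in C^\gamma(\Omega)$.

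The only nontrivial point is the geometric-series bookkeeping together with picking the enclosing ball $B^*$ with a radius comparable both to $|x-y|$ and to the two original balls, so that all three Hölder-type estimates come out with the same exponent $\gamma$. The hypothesis $\gamma < 1$ is not actually used for the summability (any positive exponent suffices), but it is essential if one wanted to push the conclusion beyond Hölder (for $\gamma = 1$ one only recovers Lipschitz-type oscillation control, and for $\gamma > 1$ one enters the realm of $\mathcal{L}^{2,\lambda}$ with $\lambda > n+2$, yielding $C^{0,1}$ with vanishing mean oscillation improvements, not discussed here).
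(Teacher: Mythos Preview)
Your argument is correct and is precisely the standard proof of Campanato's embedding: control of $|u_{x,r/2}-u_{x,r}|$ via Jensen and the hypothesis, a geometric-series summation to produce the H\"older representative $\tilde u$, and then the triangle inequality through a common enclosing ball to compare $\tilde u(x)$ and $\tilde u(y)$. The paper does not give its own proof of this proposition; it simply cites \cite{Giusti}, and the argument there is essentially the one you have written.

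One small remark on your closing comment: for $\gamma>1$ the Campanato space $\mathcal{L}^{2,n+2\gamma}$ on a connected domain actually collapses to constants, not merely to a Lipschitz-type class; this is the reason the statement is restricted to $\gamma\in(0,1)$. This does not affect the validity of your proof for the range at hand.
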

We will also need the following standard algebraic lemma (see \cite[Lemma 7.3]{Giusti}).
\begin{lemma}
\label{lemma-algebra}
Let $\phi$ be a nonnegative and nondecreasing function on $[0,r_0]$ such that
\[
\phi(\rho) \leq A \left( \frac{\rho}{r}\right)^\alpha \phi(r) + Br^\beta \qquad \text{  for every }\, 0 <\rho \leq r  \leq r_0,
\]
where $A,B, \alpha, \beta$ are nonnegative constants such that $\beta < \alpha$. Then for every $\gamma \in (\beta, \alpha)$ there 
exists a constant $C$ such that  it holds
\[
\phi(\rho) \leq C \left( \left( \frac{\rho}{r}\right)^\gamma \phi(r) +  B\rho^\beta \right)  \qquad \text{  for every }\, 0 <\rho \leq r \leq r_0.
\]
\end{lemma}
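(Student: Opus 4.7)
The plan is to prove the iteration lemma by the standard dyadic-decay bootstrap, as in Giusti's book. The heart of the argument is to choose a geometric ratio $\tau \in (0,1)$ at which the hypothesis already gives polynomial decay at rate $\gamma$, and then to chain the inequality along the sequence $\tau^k r$ and transfer the resulting bound back to arbitrary $\rho$ by monotonicity of $\phi$.

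Concretely, since $\gamma < \alpha$ I would first fix $\tau \in (0,1)$ small enough that $A\tau^{\alpha} \leq \tau^{\gamma}$ (for instance $\tau = \min(1/2,(2A)^{-1/(\alpha-\gamma)})$ works). Applying the hypothesis with $\rho = \tau r$ then yields
\[
\phi(\tau r) \leq \tau^{\gamma}\phi(r) + B r^{\beta}, \qquad 0 < r \leq r_0.
\]
Iterating this inequality along the sequence $r_k := \tau^{k}r$ and writing $a_k := \phi(r_k)$ produces the recursion $a_{k+1} \leq \tau^{\gamma} a_k + B r^{\beta}\tau^{k\beta}$, which unfolds to
\[
a_k \leq \tau^{k\gamma}\phi(r) + B r^{\beta}\sum_{j=0}^{k-1}\tau^{(k-1-j)\gamma + j\beta}.
\]
The geometric sum on the right is estimated, after the substitution $i = k-1-j$, by $\tau^{(k-1)\beta}/(1-\tau^{\gamma-\beta})$, where convergence uses exactly the hypothesis $\beta < \gamma$. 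Hence
\[
\phi(\tau^k r) \leq \tau^{k\gamma}\phi(r) + C_{\tau}\, B\,(\tau^{k}r)^{\beta}
\]
for an explicit constant $C_\tau$ depending only on $\tau,\beta,\gamma$.

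Finally, for an arbitrary $\rho \in (0, r]$ I would pick the unique integer $k \geq 0$ with $\tau^{k+1}r < \rho \leq \tau^{k}r$. Monotonicity of $\phi$ then gives $\phi(\rho) \leq \phi(\tau^{k}r)$, and the two elementary estimates $\tau^{k\gamma} \leq \tau^{-\gamma}(\rho/r)^{\gamma}$ and $(\tau^k r)^\beta \leq \tau^{-\beta}\rho^{\beta}$ convert the bound at the discrete scale $\tau^k r$ into the desired bound in terms of $\rho$, with a constant $C = C(A,\alpha,\beta,\gamma)$ absorbing the powers of $\tau$ and the factor $1/(1-\tau^{\gamma-\beta})$.

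There is no real obstacle here beyond the geometric-series bookkeeping: the only delicate point is making sure that the decay rate $\tau^{\gamma}$ gained from the choice of $\tau$ still dominates the "error" rate $\tau^{\beta}$ coming from the inhomogeneous term $Br^{\beta}$, which is precisely what the assumption $\beta < \gamma < \alpha$ guarantees. Because $\tau$ depends only on $A, \alpha, \gamma$, and the final sum depends only on $\beta,\gamma,\tau$, the final constant $C$ has the claimed form.
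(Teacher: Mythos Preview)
Your argument is correct and is exactly the standard dyadic iteration from Giusti's \emph{Direct Methods in the Calculus of Variations}, Lemma~7.3, which is precisely the reference the paper cites in lieu of giving its own proof. There is nothing to add; the only cosmetic point is that your choice $\tau = \min(1/2,(2A)^{-1/(\alpha-\gamma)})$ presumes $A>0$, but when $A=0$ the hypothesis already gives $\phi(\rho)\leq Br^\beta$ and the conclusion is immediate.
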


The next result concerns $C^{1+\beta}$ regularity of $\mA$-harmonic functions, and requires the assumption that $\mA$ is $C^{\alpha}$ with respect to $x$ for some $\alpha > 0$. We recall that even in Euclidean space, solutions of the $p$-Laplace equation always have $C^{1+\beta}$ regularity for some $\beta > 0$, but $\beta$ could be very small and, in particular, solutions are in general not $C^2$ regular.

We write $B_r = B(0,r)$ for the open ball of radius $r > 0$ centered at the origin in $\mR^n$, and $[f]_{C^{\beta}} = \sup_{x \neq y} \frac{\abs{f(x)-f(y)}}{\abs{x-y}^{\beta}}$.

\begin{prop} \label{Aharmonic_regularity}
Suppose $u \in W^{1,p}(B(x_0,r))$ is a weak solution of 
\[
\mdiv \,\mA(x,\nabla u) = 0
\]
in $B(x_0,r)$ and  $\mA$ satisfies \eqref{mA_assumptions1}--\eqref{mA_assumptions2}. Then there are constants $C > 0$ and $\beta \in (0,1)$ only depending on $n$, $p$, $\alpha$, $\delta$ and  $M$ such that $u$ is $C^{1+\beta}$ regular in $B(x_0,r)$ and 
$$
\norm{\nabla u}_{L^{\infty}(B(x_0,r/2))} + r^{\beta} [\nabla u]_{C^{\beta}(B(x_0,r/2))} \leq C r^{-n/p} \norm{\nabla u}_{L^{p}(B(x_0,r))}.
$$
\end{prop}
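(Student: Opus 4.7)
The plan is to run a frozen-coefficient/Campanato iteration, which is the standard route to $C^{1+\beta}$ regularity for $\mA$-harmonic equations with H\"older-continuous coefficients (cf.\ \cite{Manfredi}). By the rescaling $\tilde u(z) := u(x_0 + rz)/r$, under which $\tilde u$ becomes a weak solution of $\mdiv \tilde \mA(z,\nabla \tilde u) = 0$ on $B_1$ for the vector field $\tilde\mA(z,\xi) := \mA(x_0 + rz,\xi)$, and which preserves \eqref{mA_assumptions1}--\eqref{mA_assumptions2} (the H\"older seminorm in $z$ only improving by a factor $r^\alpha \leq 1$), we may reduce to the case $x_0 = 0$, $r = 1$. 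A standard Moser iteration applied to the equation satisfied by the partial derivatives of $u$ (cf.\ the $\mA$-harmonic regularity theory in \cite{HKM, Manfredi}) furnishes the a priori bound
$$
\norm{\nabla u}_{L^\infty(B_{3/4})} \leq C \norm{\nabla u}_{L^p(B_1)}.
$$

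Fix $y \in B_{1/2}$ and $\rho \in (0,1/8]$, and let $v \in u + W^{1,p}_0(B_\rho(y))$ solve the frozen homogeneous problem $\mdiv \mA(y,\nabla v) = 0$. Because $\xi \mapsto \mA(y,\xi)$ has no $x$-dependence, the classical interior $C^{1+\beta_0}$-regularity for $\mA$-harmonic functions \cite{Manfredi} gives the Campanato decay
$$
\int_{B_\sigma(y)} \abs{\nabla v - (\nabla v)_{y,\sigma}}^p \,dx \leq C \Bigl(\frac{\sigma}{\rho}\Bigr)^{n+p\beta_0} \int_{B_\rho(y)} \abs{\nabla v - (\nabla v)_{y,\rho}}^p \,dx
$$
for $0 < \sigma \leq \rho/2$, with $\beta_0 = \beta_0(n,p,\delta,M) \in (0,1)$. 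Subtracting the weak formulations of the equations for $u$ and $v$, testing with $u - v \in W^{1,p}_0(B_\rho(y))$, and using the monotonicity inequality \eqref{mA_assumptions2} on the left-hand side together with the H\"older bound \eqref{mA_assumptionsoneandhalf} and an energy estimate $\norm{\nabla v}_{L^p(B_\rho(y))} \leq C\norm{\nabla u}_{L^p(B_\rho(y))}$ on the right-hand side yields the comparison estimate
$$
\int_{B_\rho(y)} \abs{\nabla u - \nabla v}^p \,dx \leq C\rho^{\tau} \int_{B_\rho(y)} \abs{\nabla u}^p \,dx
$$
for some $\tau = \tau(p,\alpha) > 0$ (concretely $\tau = p\alpha$ when $1 < p \leq 2$ and $\tau = p\alpha/(p-1)$ when $p \geq 2$).

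Writing $\phi(y,\sigma) := \int_{B_\sigma(y)} \abs{\nabla u - (\nabla u)_{y,\sigma}}^p$, one combines the two displays via triangle inequality (transferring the decay from $\nabla v$ to $\nabla u$ up to the comparison error) and uses the first step's $L^\infty$ bound to estimate $\int_{B_\rho(y)} \abs{\nabla u}^p \leq C \rho^n$, arriving at
$$
\phi(y,\sigma) \leq C \Bigl(\frac{\sigma}{\rho}\Bigr)^{n + p\beta_0}\phi(y,\rho) + C\rho^{n + \tau}
$$
uniformly in $y \in B_{1/2}$ and $0 < \sigma \leq \rho \leq 1/8$. Choosing $\beta \in (0,1)$ with $n + p\beta$ strictly between $n$ and $\min(n+p\beta_0,\, n+\tau)$, Lemma \ref{lemma-algebra} yields $\phi(y,\sigma) \leq C \sigma^{n + p\beta}$, and the $L^p$ variant of Campanato's theorem (Proposition \ref{campanato} applied componentwise to $\nabla u$; since $\nabla u \in L^\infty$ by the first step, the $L^2$ and $L^p$ formulations are equivalent) gives $\nabla u \in C^\beta(B_{1/2})$ with norm controlled by $\norm{\nabla u}_{L^p(B_1)}$. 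Undoing the scaling yields the stated inequality on $B(x_0,r)$.

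The main delicate point is the comparison estimate when $1 < p < 2$: in this range \eqref{mA_assumptions2} controls the degenerate expression $\int (\abs{\nabla u} + \abs{\nabla v})^{p-2} \abs{\nabla u - \nabla v}^2$, not the $L^p$ norm of $\nabla u - \nabla v$ directly. The remedy is to write $\abs{\nabla u - \nabla v}^p = \bigl((\abs{\nabla u}+\abs{\nabla v})^{p-2}\abs{\nabla u - \nabla v}^2\bigr)^{p/2}(\abs{\nabla u}+\abs{\nabla v})^{p(2-p)/2}$ and apply H\"older's inequality with exponents $2/p$ and $2/(2-p)$, the weight $(\abs{\nabla u}+\abs{\nabla v})^p$ then being absorbed by the energy estimate on $\nabla v$ together with the a priori gradient bound on $\nabla u$.
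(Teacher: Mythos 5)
Your proposal is correct in outline but follows a genuinely different route from the paper. The paper's proof is short: rescale to the unit ball (so that $\mA_r(\tilde x,\xi)=\mA(r\tilde x,\xi)$ satisfies \eqref{mA_assumptions1}--\eqref{mA_assumptions2} with the same constants), normalize $v=(\tilde u-\tilde u_{0,1})/\norm{\tilde u-\tilde u_{0,1}}_{W^{1,p}(B_1)}$ so that $\norm{v}_{W^{1,p}(B_1)}=1$, invoke the interior $C^{1+\beta}$ estimate of \cite{Manfredi} (see also \cite{Serrin}, \cite{DiBenedetto}) for the normalized solution, and undo the scaling via the Poincar\'e inequality. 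You instead reconstruct the content of that citation by the frozen-coefficient/Campanato scheme --- which is essentially Manfredi's own argument. What the paper's route buys is brevity and a clean separation of the quantitative scaling from the qualitative regularity theory; what yours buys is self-containedness, and your comparison estimate (including the H\"older trick for $1<p<2$ and the exponents $\tau$) is correct and matches the computation the paper carries out later in Lemma \ref{prop_two_equations_close}.

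Two steps in your argument are asserted rather than proved, and one of the justifications is wrong. First, the bound $\norm{\nabla u}_{L^\infty(B_{3/4})}\le C\norm{\nabla u}_{L^p(B_1)}$ cannot be obtained by ``Moser iteration applied to the equation satisfied by the partial derivatives of $u$'': under \eqref{mA_assumptionsoneandhalf} the coefficients are only $C^\alpha$ in $x$, so the equation cannot be differentiated and $\partial_i u$ satisfies no usable PDE. For $x$-dependent H\"older coefficients this gradient bound is itself part of the conclusion of \cite{Manfredi} and is normally derived from the same freezing argument (a Morrey-type decay of $\int_{B_\rho}\abs{\nabla u}^p$ obtained by comparison), so using it as an input makes your proof mildly circular unless you either cite it or add that preliminary iteration. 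Second, the excess decay for the frozen problem with the excess $\int_{B_\rho}\abs{\nabla v-(\nabla v)_{y,\rho}}^p$ (rather than $\int_{B_\rho}\abs{\nabla v}^p$) on the right-hand side does not follow in one line from interior $C^{1+\beta_0}$ regularity, since $v-(\nabla v)_{y,\rho}\cdot x$ is not a solution of the nonlinear autonomous equation; it requires the standard degenerate/non-degenerate dichotomy (or the quantitative estimate $\sup_{B_{\rho/2}}\abs{\nabla v-(\nabla v)_{y,\rho}}\le C(\fint_{B_\rho}\abs{\nabla v-(\nabla v)_{y,\rho}}^p)^{1/p}$). Both statements are true and citable, so these are gaps in justification rather than fatal errors, but as written your first step does not stand on its own.
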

\begin{proof}
We may assume that $x_0 = 0$ and $u$ solves $\mdiv\,\mA(x,\nabla u) = 0$ in $B_r$. Define scaled coordinates $\tilde{x} = x/r$ and a scaled function $\tilde{u}(\tilde{x}) = r^{-1} u(r \tilde{x})$. Then $\tilde{u} \in W^{1,p}(B_1)$ is a solution of 
\[
\mdiv\,\mA_r(\tilde{x},\nabla \tilde{u}(\tilde{x})) = 0 \quad \text{in } B_1
\]
where $\mA_r(\tilde{x},\tilde{\xi}) = \mA(r\tilde{x}, \tilde{\xi})$. Now $\mA_r$ satisfies \eqref{mA_assumptions1}--\eqref{mA_assumptions2} with the same constants $\delta$ and $M$ (this uses that $r \leq 1$).

We define 
\[
v = \frac{\tilde{u} - \tilde{u}_{0,1}}{\norm{\tilde{u} - \tilde{u}_{0,1}}_{W^{1,p}(B_1)}}.
\]
Then $v \in W^{1,p}(B_1)$ is a solution of $\mdiv\,\mA_r(\tilde{x}, \nabla v(\tilde{x})) = 0$ in $B_1$ with $\norm{v}_{W^{1,p}(B_1)} = 1$. It follows from \cite{Manfredi} (see also \cite{Serrin}, \cite{DiBenedetto})
 that there are $C > 0$ and $\beta > 0$, only depending on $n, p, \alpha, \delta$ and $M$, such that 
\[
\norm{v}_{C^{1+\beta}(B_{1/2})} \leq C.
\]
In particular, this implies that 
\[
\norm{\nabla v}_{L^{\infty}(B_{1/2})} + [\nabla v]_{C^{\beta}(B_{1/2})} \leq C.
\]
The Poincar\'e inequality yields $\norm{\tilde{u} - \tilde{u}_{0,1}}_{W^{1,p}(B_1)} \leq C \norm{\nabla \tilde{u}}_{L^p(B_1)}$. Consequently 
\[
\norm{\nabla \tilde{u}}_{L^{\infty}(B_{1/2})} +  [\nabla \tilde{u}]_{C^{\beta}(B_{1/2})} \leq C \norm{\nabla \tilde{u}}_{L^p(B_1)}.
\]
Undoing the scaling proves the result.
\end{proof}

The following lemma will be useful for the Schauder estimate that we  use later.
We will apply it in the case where either the solution of the $\mA$-harmonic equation or the solution of the equation with frozen coefficient
 has nonvanishing gradient, and $L^2$ estimates will be natural in this context.

\begin{lemma} \label{prop_two_equations_close}
Suppose $\mA$ satisfies \eqref{mA_assumptions1}--\eqref{mA_assumptions2} and   
$u_1, u_0 \in W^{1,p}(B(x_0,r))$, $r \leq 1$, are such that $u_1-u_0 \in  W^{1,p}_0(B(x_0,r))$ and $u_1$  is a weak solution  of
\[
\mdiv \,\mA(x,\nabla u_1) = 0
\]
and $u_0$ is a weak solution of the equation with coefficients frozen at $x_0$, 
\[
\mdiv \,\mA(x_0,\nabla u_0) = 0.
\]
If either $u_1$ or $u_0$  satisfies  the condition 
\begin{equation} \label{c_condition}
\text{$c_0 \leq \abs{\nabla u_i} \leq 1/c_0$ \, \, in $B(x_0,r)$}
\end{equation}
for some $c_0>0$, then we have the two estimates 
\begin{align}
\norm{\nabla u_1 - \nabla u_0}_{L^2(B(x_0,r/2))} &\leq C r^{\frac{n}{2}+\alpha}, \label{c_estimate1} \\
\norm{\nabla u_1 - \nabla u_0}_{L^{\infty}(B(x_0,r/4))} &\leq C  r^{\frac{2\alpha\beta}{n+2\beta}}. \label{c_estimate2}
\end{align}
The constants $C$ and $\beta$ depend on $c_0$, $n$, $p$, $\alpha$, $\delta$ and $M$.
\end{lemma}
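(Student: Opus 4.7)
The plan is to follow the standard comparison-with-frozen-coefficients strategy for divergence-form quasilinear equations. I would take $u_1-u_0 \in W^{1,p}_0(B(x_0,r))$ as a test function in the weak formulations of both equations, subtract, and split the flux difference as
\[
\mA(x,\nabla u_1) - \mA(x_0,\nabla u_0) = [\mA(x,\nabla u_1) - \mA(x_0,\nabla u_1)] + [\mA(x_0,\nabla u_1) - \mA(x_0,\nabla u_0)].
\]
This produces the identity
\[
\int_{B(x_0,r)} [\mA(x_0,\nabla u_1) - \mA(x_0,\nabla u_0)] \cdot (\nabla u_1 - \nabla u_0)\,dx = \int_{B(x_0,r)} [\mA(x_0,\nabla u_1) - \mA(x,\nabla u_1)] \cdot (\nabla u_1 - \nabla u_0)\,dx.
\]
Monotonicity \eqref{mA_assumptions2} bounds the left side below by $\delta \int (\abs{\nabla u_1}+\abs{\nabla u_0})^{p-2}\abs{\nabla u_1 - \nabla u_0}^2\,dx$, while the H\"older-in-$x$ condition \eqref{mA_assumptionsoneandhalf} bounds the right side above by $Mr^\alpha \int \abs{\nabla u_1}^{p-1}\abs{\nabla u_1 - \nabla u_0}\,dx$.

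To extract \eqref{c_estimate1}, the degenerate weight $(\abs{\nabla u_1}+\abs{\nabla u_0})^{p-2}$ must be bounded below and the factor $\abs{\nabla u_1}^{p-1}$ above. Supposing $u_1$ satisfies \eqref{c_condition} (the other case is symmetric), the upper bound is immediate. For the weight I would first invoke Proposition \ref{Aharmonic_regularity} on $u_0$ to obtain $\norm{\nabla u_0}_{L^\infty(B(x_0,r/2))} \leq C$ with $C$ independent of $r$; the required $L^p$ control of $\nabla u_0$ on $B(x_0,r)$ follows by testing $u_0$'s own equation with $u_0 - u_1$ and using the coercivity implied by \eqref{mA_assumptions2} together with the pointwise bound on $\nabla u_1$. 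Combined with \eqref{c_condition}, this gives $0 < c \leq \abs{\nabla u_1}+\abs{\nabla u_0} \leq C$ on $B(x_0,r/2)$, which makes the weight bounded below there by a constant uniform in $r$ (in both the $p\geq 2$ and the $p<2$ regimes). Dropping the nonnegative left-hand integrand outside $B(x_0,r/2)$ and applying Cauchy--Schwarz on the right, together with the a priori $L^p$ bounds on both gradients, then yields a self-improving inequality that closes to give the claim $\norm{\nabla u_1 - \nabla u_0}_{L^2(B(x_0,r/2))} \leq C r^{n/2+\alpha}$.

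The pointwise bound \eqref{c_estimate2} follows by interpolating \eqref{c_estimate1} against the gradient H\"older regularity. Proposition \ref{Aharmonic_regularity} applied to $u_1$ and $u_0$, combined with the scaling $\tilde u(\tilde x) = r^{-1} u(r\tilde x)$, yields $[\nabla u_i]_{C^\beta(B(x_0,r/4))} \leq C r^{-\beta}$, so $f := \nabla u_1 - \nabla u_0$ satisfies $[f]_{C^\beta(B(x_0,r/4))} \leq C r^{-\beta}$ along with $\norm{f}_{L^2(B(x_0,r/2))} \leq C r^{n/2+\alpha}$. For $x \in B(x_0,r/4)$ and any $\rho \leq r/4$, averaging $f$ on $B(x,\rho) \subset B(x_0,r/2)$ gives
\[
\abs{f(x)} \leq [f]_{C^\beta}\, \rho^\beta + C \rho^{-n/2} \norm{f}_{L^2(B(x,\rho))} \leq C r^{-\beta} \rho^\beta + C \rho^{-n/2} r^{n/2+\alpha},
\]
and balancing the two terms via $\rho^{n/2+\beta} = r^{n/2+\alpha+\beta}$ produces precisely the exponent $2\alpha\beta/(n+2\beta)$ appearing in \eqref{c_estimate2}.

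The main obstacle is the degeneracy/singularity of the $p$-Laplace-type structure when $p \neq 2$: the weight $(\abs{\nabla u_1}+\abs{\nabla u_0})^{p-2}$ vanishes for $p>2$ and blows up for $p<2$ at points where the gradients are small, yet hypothesis \eqref{c_condition} only supplies a two-sided gradient bound for one of the two solutions. Propagating the nondegeneracy to the other solution requires combining the scale-invariant $L^\infty$ gradient estimate of Proposition \ref{Aharmonic_regularity} (which only lives on a concentric half-ball) with a careful split of the monotonicity integral between $B(x_0,r/2)$, where both gradients are uniformly controlled, and the annulus $B(x_0,r) \setminus B(x_0,r/2)$, where only integral information is available; this is the reason \eqref{c_estimate1} is stated on the inner ball $B(x_0,r/2)$ rather than on all of $B(x_0,r)$.
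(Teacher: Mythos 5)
Your overall strategy matches the paper's: test both equations with $u_1-u_0$, play the monotonicity \eqref{mA_assumptions2} against the H\"older-in-$x$ condition \eqref{mA_assumptionsoneandhalf}, transfer the $L^p$ gradient bound from the controlled solution to the other via an energy comparison, use Proposition \ref{Aharmonic_regularity} for interior $L^\infty$ and $C^\beta$ gradient bounds, and obtain \eqref{c_estimate2} by $L^2$--$C^\beta$ interpolation. Your derivation of \eqref{c_estimate2}, including the balancing of $\rho$ that produces the exponent $2\alpha\beta/(n+2\beta)$, is correct and is in substance the paper's argument (the paper cites an interpolation lemma rather than reproving it).

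There is, however, a genuine gap in your route to \eqref{c_estimate1}, caused by the order of operations. You propose to first bound the weight $w=(\abs{\nabla u_1}+\abs{\nabla u_0})^{p-2}$ from below on $B(x_0,r/2)$, discard the left-hand integrand outside that ball, and only then apply Cauchy--Schwarz to the right-hand side $Mr^\alpha\int_{B_r}\abs{\nabla u_1}^{p-1}\abs{\nabla u_1-\nabla u_0}$. After Cauchy--Schwarz the right side carries $\norm{\nabla u_1-\nabla u_0}_{L^2(B_r)}$ over the \emph{full} ball, while the left side only controls $\norm{\nabla u_1-\nabla u_0}_{L^2(B_{r/2})}^2$ over the half ball, so there is nothing to absorb and the inequality does not ``close''. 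The best available substitute is the a priori bound $\norm{\nabla u_1-\nabla u_0}_{L^2(B_r)}\leq Cr^{n/2}$, which yields only $\norm{\nabla u_1-\nabla u_0}_{L^2(B_{r/2})}\leq Cr^{n/2+\alpha/2}$, i.e.\ half of the claimed gain in $r$. The repair is the paper's \emph{weighted} Cauchy--Schwarz performed before any localization: with $I=\int_{B_r}w\abs{\nabla u_1-\nabla u_0}^2$ and $E=\mA(x_0,\nabla u_1)-\mA(x,\nabla u_1)$, estimate $\abs{\int_{B_r}E\cdot(\nabla u_1-\nabla u_0)}\leq\bigl[\int_{B_r}w^{-1}\abs{E}^2\bigr]^{1/2}I^{1/2}$, absorb $I^{1/2}$ to obtain $I\leq\delta^{-2}\int_{B_r}w^{-1}\abs{E}^2\leq Cr^{2\alpha}\int_{B_r}w^{-1}\abs{\nabla u_1}^{2p-2}$, and only afterwards restrict the left side to $B_{r/2}$ where $w$ is bounded below. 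The remaining integral is $\leq Cr^n$: immediately for $p\geq2$ since then $w^{-1}\abs{\nabla u_1}^{2p-2}\leq\abs{\nabla u_1}^p\leq c_0^{-p}$, and for $1<p<2$ via H\"older with exponents $p/(2-p)$ and $p/(2p-2)$ together with the $L^p$ bounds on both gradients (this is exactly where the paper uses that splitting). With this correction the rest of your argument goes through.
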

\begin{proof}
Without loss of generality we may assume that $x_0 = 0$. We prove the result by assuming that $u_0$ satisfies \eqref{c_condition} since in the other case the argument is similar. Write $\mA_1 = \mA$ and $\mA_0(x,\xi) = \mA(x_0,\xi)$.

Let us first prove \eqref{c_estimate1}. We use the assumptions on $\mA$ and   the fact that we may use $u_1-u_0$ as a test function in the equations for $u_1$ and $u_0$ to conclude that 
\begin{align*}
I &:= \int_{B_r} (\abs{\nabla u_1} + \abs{\nabla u_0})^{p-2} \abs{\nabla u_1 - \nabla u_0}^2 \,dx \\
 &\leq \delta^{-1} \int_{B_r} ( \mA_1(x,\nabla u_1) - \mA_1(x,\nabla u_0) ) \cdot (\nabla u_1 - \nabla u_0) \,dx \\
 &= -\delta^{-1} \int_{B_r} ( \mA_1(x,\nabla u_0) - \mA_0(x,\nabla u_0) ) \cdot (\nabla u_1 - \nabla u_0) \,dx \\
  &\leq \delta^{-1} \left[ \int_{B_r} (\abs{\nabla u_1} + \abs{\nabla u_0})^{2-p} \abs{ \mA_1(x,\nabla u_0) - \mA_0(x,\nabla u_0)}^2 \,dx \right]^{1/2} I^{1/2}.
\end{align*}
Thus by \eqref{mA_assumptionsoneandhalf} 
\begin{align} \label{i_estimate}
I &\leq \delta^{-2} \int_{B_r} (\abs{\nabla u_1} + \abs{\nabla u_0})^{2-p} \abs{ \mA_1(x,\nabla u_0) - \mA_0(x,\nabla u_0)}^2 \notag \\
 &\leq \frac{M^2}{\delta^2} r^{2\alpha} \int_{B_r} (\abs{\nabla u_1} + \abs{\nabla u_0})^{2-p} \abs{\nabla u_0}^{2p-2}.
\end{align}

Assume first that $p \geq 2$. Then \eqref{c_condition} and  \eqref{i_estimate} imply 
\begin{align*}
\int_{B_{r/2}} \abs{\nabla u_1 - \nabla u_0}^2 \leq C  r^{2\alpha} \int_{B_r} \abs{\nabla u_0}^p  
\end{align*}
which is \eqref{c_estimate1}.

Let us  assume $1<p<2$. We estimate the right hand side of \eqref{i_estimate} by H\"older inequality with exponents $q = \frac{p}{2-p}$ and $q' = \frac{p}{2p-2}$ 
and obtain  
\begin{align}
 &\int_{B_r} (\abs{\nabla u_1} + \abs{\nabla u_0})^{2-p} \abs{\nabla u_0}^{2p-2} \notag \\
 &\leq \left( \int_{B_r} (\abs{\nabla u_1} + \abs{\nabla u_0})^p \right)^{\frac{2-p}{p}} \left( \int_{B_r} \abs{\nabla u_0}^p \right)^{\frac{2p-2}{p}} \notag \\
 &\leq (\norm{\nabla u_1}_{L^p(B_r)} + \norm{\nabla u_0}_{L^p(B_r)})^{2-p} \norm{\nabla u_0}_{L^p(B_r)}^{2p-2}. \label{b_estimate2}
\end{align}
Moreover, since $\int_{B_r} \mA_1(x,\nabla u_1) \cdot \nabla(u_1-u_0) \,dx = 0$, we have 
\begin{align*}
\delta \int_{B_r} \abs{\nabla u_1}^p \,dx &\leq \int_{B_r} \mA_1(x,\nabla u_1) \cdot \nabla u_1 \,dx = \int_{B_r} \mA_1(x,\nabla u_1) \cdot \nabla u_0 \,dx \\
 &\leq M \int_{B_r} \abs{\nabla u_1}^{p-1} \abs{\nabla u_0} \,dx \leq M \norm{\nabla u_1}_{L^p(B_r)}^{p-1} \norm{\nabla u_0}_{L^p(B_r)}.
\end{align*}
Hence
\begin{equation} \label{b_estimate3}
\norm{\nabla u_1}_{L^p(B_r)} \leq \frac{M}{\delta} \norm{\nabla u_0}_{L^p(B_r)}.
\end{equation}
Therefore the  the right hand side of \eqref{i_estimate} can be estimated by \eqref{b_estimate2} and  \eqref{b_estimate3}
\begin{equation} \label{b_estimate4}
 r^{2\alpha} \int_{B_r} (\abs{\nabla u_1} + \abs{\nabla u_0})^{2-p} \abs{\nabla u_0}^{2p-2} \leq C r^{2\alpha}  \int_{B_r} \abs{\nabla u_0}^p
\end{equation}
To estimate the left hand side of  \eqref{i_estimate} we use Proposition \ref{Aharmonic_regularity} and \eqref{b_estimate3} show that on $B_{r/2}$ 
\[
\begin{split}
\abs{\nabla u_0} + \abs{\nabla u_1} &\leq 1/c_0 + C r^{-n/p} \norm{\nabla u_1}_{L^p(B_r)} \\
&\leq 1/c_0 + C r^{-n/p} \norm{\nabla u_0}_{L^p(B_r)} \\
&\leq C.
\end{split}
\]
Therefore \eqref{i_estimate}  and  \eqref{b_estimate4} yield
\[
\int_{B_{r/2}} \abs{\nabla u_1 - \nabla u_0}^2  \leq C  r^{2\alpha} \int_{B_r} \abs{\nabla u_0}^p
\]
and  we again  have \eqref{c_estimate1}.

It remains to prove \eqref{c_estimate2} assuming \eqref{c_condition} for $u_0$. We first invoke Proposition \ref{Aharmonic_regularity} which shows that 
\[
r^{\beta} [\nabla u_j]_{C^{\beta}(B_{r/2})} \leq C r^{-n/p} \norm{\nabla u_j}_{L^{p}(B_{r})}, \quad j=0,1.
\]
Using \eqref{b_estimate3} and the fact that $\abs{\nabla u_0} \leq 1/c_0$ in $B_r$, we obtain 
\[
r^{\beta} [\nabla u_j]_{C^{\beta}(B_{r/2})} \leq C, \quad j=0,1.
\]
Combining this with \eqref{c_estimate1} and increasing the constant slightly gives the two estimates 
\begin{align*}
[r^{\beta}(\nabla u_1 - \nabla u_0)]_{C^{\beta}(B_{r/2})} &\leq C 4^{\frac{n}{2}+\beta}, \\
\norm{r^{\beta}(\nabla u_1 - \nabla u_0)}_{L^2(B_{r/2})} &\leq C  4^{\frac{n}{2}+\beta} \frac{r^{\frac{n}{2}+\alpha+\beta}}{4^{\frac{n}{2}+\beta}}.
\end{align*}
Interpolating these two estimates by \cite[Lemma A.1]{LS_MRL} (with the choices $\Omega = B_{r/2}$, $K = \overline{B_{r/4}}$, $M = C 4^{\frac{n}{2}+\beta}$ and $f = r^{\beta}(\nabla u_1 - \nabla u_0)$) gives 
\begin{align*}
 &\norm{r^{\beta}(\nabla u_1 - \nabla u_0)}_{L^{\infty}(B_{r/4})} \\
 &\quad \leq C_{n,p,\beta} (C 4^{\frac{n}{2}+\beta})^{\frac{n}{n+2\beta}} \norm{r^{\beta}(\nabla u_1 - \nabla u_0)}_{L^2(B_{r/2})}^{\frac{2\beta}{n+2\beta}} \\
 &\quad \leq C r^{\beta + \frac{2\alpha \beta}{n+2\beta}}.
\end{align*}
This proves \eqref{c_estimate2}.
\end{proof}

The next result shows that if $\mA$ is $C^{\alpha}$ with respect to $x$, then one can upgrade the $C^{1+\beta}$ regularity of a weak solution $u$ (recall that $\beta$ could be very small) to $C^{1+\alpha}$ regularity in any open set where $\nabla u$ is nonvanishing.

\begin{prop} \label{Aharmonic_regularity_nonvanishing}
Suppose $u \in W^{1,p}(B_1)$ is a weak solution of 
\[
\mdiv \,\mA(x,\nabla u) = 0
\]
and  $\mA$ satisfies \eqref{mA_assumptions1}--\eqref{mA_assumptions2}. 
If $\nabla u$ is nonvanishing in some open set $U \subset B_1$, then $u$ is $C^{1+\alpha}$ in $U$.
\end{prop}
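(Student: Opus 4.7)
The plan is to run a Campanato-type frozen-coefficient argument around each point where $\nabla u \neq 0$. Fix $x_0 \in U$. By Proposition \ref{Aharmonic_regularity} the gradient $\nabla u$ is continuous, so since $\nabla u(x_0) \neq 0$ we can choose $r_0 > 0$ and $c_0 > 0$ so that $B(x_0, 2r_0) \subset U$ and $c_0 \leq \abs{\nabla u(x)} \leq 1/c_0$ on $B(x_0, 2r_0)$. For each $y \in B(x_0, r_0)$ and each $0 < r \leq r_0$, let $u_0 \in u + W^{1,p}_0(B(y,r))$ solve the frozen Dirichlet problem $\mdiv \mA(y,\nabla u_0) = 0$ on $B(y,r)$. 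Lemma \ref{prop_two_equations_close}, applied with condition \eqref{c_condition} on $u_1 = u$, gives both the $L^2$ closeness
\[
\norm{\nabla u - \nabla u_0}_{L^2(B(y,r/2))}^2 \leq C r^{n+2\alpha}
\]
and an $L^\infty$ bound of the form $\norm{\nabla u - \nabla u_0}_{L^\infty(B(y,r/4))} \leq Cr^\eta$ for some $\eta > 0$. Shrinking $r_0$ if necessary, the latter forces $c_0/2 \leq \abs{\nabla u_0} \leq 2/c_0$ throughout $B(y,r/4)$ for every $r \leq r_0$.

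The main technical step is a Campanato decay estimate for the frozen solution in its nondegenerate region: for all $0 < \rho \leq r/8$,
\[
\int_{B(y,\rho)} \abs{\nabla u_0 - (\nabla u_0)_{y,\rho}}^2 \, dx \leq C \Bigl(\frac{\rho}{r}\Bigr)^{n+2} \int_{B(y,r/8)} \abs{\nabla u_0 - (\nabla u_0)_{y,r/8}}^2 \, dx.
\]
This is the part I expect to be the principal obstacle. In $B(y,r/4)$ the autonomous equation $\mdiv \mA(y,\nabla u_0) = 0$ is uniformly elliptic with structure function $\mA(y,\cdot)$ that is $C^\infty$ in $\xi$ away from $0$ (recall $\mA$ comes from \eqref{A_def} and we are in the nondegenerate regime $\abs{\nabla u_0} \geq c_0/2$). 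Rescaling $v(z) = r^{-1} u_0(y+rz)$ on $B_{1/4}$ and applying classical interior regularity for quasilinear uniformly elliptic equations with smooth structure yields $v \in C^{2,\gamma}$ on $B_{1/8}$ with an a priori bound depending only on $c_0, n, p, \delta, M$. Undoing the scaling gives $\norm{D^2 u_0}_{L^\infty(B(y,r/8))} \leq C/r$, from which the claimed $(\rho/r)^{n+2}$ decay follows in the standard way (differentiating the equation and applying the mean value / Caccioppoli argument to each component of $\nabla u_0$ also works and gives the decay directly).

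Once the frozen decay is available, the usual triangle-inequality bookkeeping
\[
\varphi_y(\rho) := \int_{B(y,\rho)} \abs{\nabla u - (\nabla u)_{y,\rho}}^2 \, dx,
\]
combined with the $L^2$ closeness above (to pass between $\nabla u$ and $\nabla u_0$ on both $B(y,\rho)$ and $B(y,r/8)$), produces the perturbed decay
\[
\varphi_y(\rho) \leq C \Bigl(\frac{\rho}{r}\Bigr)^{n+2} \varphi_y(r) + C r^{n+2\alpha}, \qquad 0 < \rho \leq r \leq r_0,
\]
uniformly in $y \in B(x_0, r_0)$. Since $\alpha < 1$ we have $n + 2\alpha < n+2$, so Lemma \ref{lemma-algebra} applies and gives $\varphi_y(\rho) \leq C \rho^{n+2\alpha}$ for all small $\rho$, uniformly in $y$. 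Proposition \ref{campanato} applied componentwise to $\nabla u$ then yields $\nabla u \in C^{\alpha}(B(x_0, r_0))$, i.e.\ $u \in C^{1+\alpha}$ near $x_0$. Since $x_0 \in U$ was arbitrary, $u \in C^{1+\alpha}(U)$.
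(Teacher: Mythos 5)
Your proposal is correct and follows essentially the same route as the paper: freeze the coefficients on small balls, use Lemma \ref{prop_two_equations_close} (with the nondegeneracy condition verified for $u$ via Proposition \ref{Aharmonic_regularity}) to transfer the lower gradient bound to the frozen solution and to obtain the $L^2$ closeness, derive the $(\rho/r)^{n+2}$ Campanato decay for the smooth nondegenerate frozen solution, and conclude with Lemma \ref{lemma-algebra} and Proposition \ref{campanato}. The only differences are cosmetic (your radii $r, r/2, r/4, r/8$ versus the paper's $8r, 4r, 2r, r$, and your appeal to classical Schauder theory for the autonomous equation where the paper cites \cite[Proposition 2.3]{LS_MRL} and \cite[Theorem 7.7]{Giusti}).
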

\begin{proof}
Assume that $\nabla u$ is nonvanishing in $U \subset B_1$. Our aim is to show that $u$ is $C^{1+\alpha}$ near any $x_0 \in U$. Fix $x_0 \in U$ and choose $R > 0$ so that $B(x_0, 8 R) \subset U$. We will show that for some small $r_0 > 0$ one has 
\begin{equation}
\label{to show}
\int_{B(z,r)}|\nabla u-(\nabla u)_{z,r}|^2 \, dy \leq C r^{n+2 \alpha} \norm{\nabla u}_{L^2(B(x_0,8R))}^2
\end{equation}
whenever $B(z,r) \subset B(x_0,R)$ and $r \leq r_0$. This implies  $u \in C^{1+ \alpha}(B(x_0,R))$ by Proposition \ref{campanato}.

By Proposition \ref{Aharmonic_regularity}, $u$ is $C^{1+\beta}$ regular in $B_1$ and thus there is a constant $c_0 > 0$ such that 
\begin{equation} \label{u_bound_br}
c_0 \leq \abs{\nabla u} \leq 1/c_0 \quad \text{in } B(x_0,8R).
\end{equation}
Fix $z$ and $r$ with $B(z,r) \subset B(x_0,R)$, and let $v = v_r \in W^{1,p}(B(z,8r))$ be the weak solution of the frozen coefficient equation
\[
\mdiv \,\mA(z,\nabla v) = 0 \quad \text{in }\, B(z,8r), \qquad v = u \,\, \text{on }\, \partial B(z,8r).
\]
By Lemma \ref{prop_two_equations_close}  we have (with $C$ independent of $z$ and $r$) 
\begin{align}
\norm{\nabla u - \nabla v}_{L^2(B(z,4r))} &\leq C  r^{\frac{n}{2}+\alpha}, \label{vertailu standardi} \\
\norm{\nabla u - \nabla v}_{L^{\infty}(B(z,2r))} &\leq C r^{\frac{2\alpha\beta}{n+2\beta}}. \label{c_estimate2_linfty}
\end{align}
The bounds \eqref{u_bound_br} and \eqref{c_estimate2_linfty} imply  that there is $r_0>0$ depending on $c_0$ such that 
\begin{equation}
\label{bound below}
|\nabla v| \geq c_0/2 \qquad  \text{in } \,  B(z,2r)
\end{equation}
when $r \leq r_0$. Now $v$ is the solution of an $\mA$-harmonic equation in $B(z,2r)$ with constant coefficients  and $\nabla v$ is nonvanishing by \eqref{bound below}. A standard argument (see for instance \cite[Proposition 2.3]{LS_MRL}) implies that $v$ is $C^{\infty}$ in $\overline{B(z,r)}$. Thus arguing as in \cite[Theorem 7.7]{Giusti} we have the estimate
\begin{equation}
\label{camp for v}
\int_{B(z,\rho)} |\nabla v-(\nabla v)_{z,\rho}|^2 \, dx \leq C \left( \frac{\rho}{r}\right)^{n+2}\int_{B(z,r)} |\nabla v-(\nabla v)_{z,r}|^2   \, dx
\end{equation}
for every $\rho \leq r \leq r_0$ for some small $r_0$.  We use \eqref{vertailu standardi}, \eqref{camp for v} and the estimate $\int_{B(z,\rho)} \abs{f - f_{z,\rho}}^2 \leq \int_{B(z,\rho)} \abs{f-\lambda}^2$ for any $\lambda \in \mR$ to conclude that 
\[
\begin{split}
\int_{B(z,\rho)} &|\nabla u-(\nabla u)_{z,\rho}|^2 \, dx \\
&\leq 2\int_{B(z,\rho)} |\nabla v-(\nabla v)_{z,\rho}|^2 \, dx+ 2 \int_{B(z,\rho)} |\nabla (u-v)|^2 \, dx\\
&\leq C \left( \frac{\rho}{r}\right)^{n+2}\int_{B(z,r)} |\nabla v-(\nabla v)_{z,r}|^2   \, dx + 2 \int_{B(z,r)} |\nabla (u-v)|^2 \, dx \\ 
&\leq  C \left( \frac{\rho}{r}\right)^{n+2}\int_{B(z,r)} |\nabla u-(\nabla u)_{z,r}|^2   \, dx + C \int_{B(z,r)} |\nabla (u-v)|^2 \, dx\\
&\leq  C \left( \frac{\rho}{r}\right)^{n+2}\int_{B(z,r)} |\nabla u-(\nabla u)_{z,r}|^2   \, dx + Cr^{n+2 \alpha}
\end{split}
\]
for every $\rho \leq r \leq r_0$. The estimate \eqref{to show} then follows from Lemma \ref{lemma-algebra}.
\end{proof}

We continue to study the regularity properties of $\mA$-harmonic functions and prove a Calder\'on-Zygmund type result which will be needed later in Section~\ref{sec_weyl}. We denote the partial derivatives of $x \mapsto \mA(x, \xi)$ by $\partial_{x_i} \mA(x, \xi)$ and of  $\xi \mapsto \mA(x, \xi)$ by $\partial_{\xi_i} \mA(x, \xi)$.
 Let us assume that  in addition to the structural assumptions \eqref{mA_assumptions1}--\eqref{mA_assumptions2}  the vector field $\mA$ satisfies
\begin{equation}
\label{mA_assumption sobo}
|\partial_{x_i} \mA(x, \xi)| \leq f(x) |\xi|^{p-1}, \qquad \text{for some }\, f \in L^q(\Omega),
\end{equation}
where $q \geq 2$ and
\begin{equation}
\label{mA_assumption sobo 2}
(x,\xi) \mapsto \partial_{\xi_i} \mA(x, \xi) \,\,  \qquad \text{is continuous}.
\end{equation}

\begin{prop} \label{Aharmonic sobolev regularity}
Suppose $u \in W^{1,p}(B_1)$ is a weak solution of 
\[
\mdiv \,\mA(x,\nabla u) = 0 
\]
and  $\mA$ satisfies \eqref{mA_assumptions1}--\eqref{mA_assumptions2} and \eqref{mA_assumption sobo}, \eqref{mA_assumption sobo 2}. 
If $\nabla u$ is nonvanishing in some open set $U \subset B_1$, then $u \in W_{\text{loc}}^{2,q}(U)$.
\end{prop}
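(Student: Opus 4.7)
The plan is to combine Proposition \ref{Aharmonic_regularity_nonvanishing} with Nirenberg's difference-quotient method, reducing the problem to a Calder\'on-Zygmund estimate for a linear divergence-form equation with continuous coefficients. Since Proposition \ref{Aharmonic_regularity_nonvanishing} already gives $u \in C^{1+\alpha}(U)$, on any open set $U'$ with $U' \csubset U$ there exist constants $0 < c_0 < C_0$ with $c_0 \leq \abs{\nabla u} \leq C_0$ throughout $U'$.

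Fix $s \in \{1,\ldots,n\}$ and, for small $h > 0$, consider the difference quotient $v_h(y) := (u(y+he_s)-u(y))/h$. Translating the test function in the weak $\mA$-harmonic equation and subtracting from the untranslated version yields
\[
\int [\mA(y+he_s,\nabla u(y+he_s)) - \mA(y,\nabla u(y))] \cdot \nabla \vp(y)\,dy = 0
\]
for all $\vp \in C_c^{\infty}(U')$ supported at distance at least $h$ from $\partial U'$. Using the fundamental theorem of calculus along the path $\gamma_t(y) := (y+the_s,\,\nabla u(y)+t(\nabla u(y+he_s)-\nabla u(y)))$ and dividing by $h$, I would rewrite this as a \emph{linear} divergence-form equation for $v_h$,
\[
-\mdiv(A_h \nabla v_h) = \mdiv F_h,
\]
with coefficient matrix and right-hand side
\[
A_h^{jk}(y) := \int_0^1 \partial_{\xi_k}\mA^j(\gamma_t(y))\,dt, \qquad F_h^j(y) := \int_0^1 \partial_{x_s}\mA^j(\gamma_t(y))\,dt.
\]

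Next I would verify the needed hypotheses uniformly in $h$. Differentiating the monotonicity inequality \eqref{mA_assumptions2} in $\zeta$ at $\zeta = \xi$ yields $\partial_{\xi_k}\mA^j(x,\xi) h^j h^k \geq \delta\,2^{p-2}\abs{\xi}^{p-2}\abs{h}^2$; combined with $c_0 \leq \abs{\nabla u} \leq C_0$ and the upper bound \eqref{mA_assumptionsoneandsome} this gives uniform ellipticity and boundedness of $A_h$. The continuity of $\partial_\xi\mA$ \eqref{mA_assumption sobo 2} together with the continuity of $\nabla u$ shows that $A_h$ has a uniform modulus of continuity on $U'$ (indeed $A_h \to \partial_\xi\mA(\cdot,\nabla u(\cdot))$ uniformly as $h \to 0$). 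Assumption \eqref{mA_assumption sobo} with $\abs{\nabla u} \leq C_0$ yields $\abs{F_h(y)} \leq C_0^{p-1}\int_0^1 f(y+the_s)\,dt$, so Minkowski's integral inequality gives $\norm{F_h}_{L^q(U')} \leq C_0^{p-1}\norm{f}_{L^q(U)}$, uniformly in $h$.

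Finally, the Calder\'on-Zygmund estimate for linear divergence-form equations with uniformly elliptic and continuous coefficients (a scalar instance of the results of Appendix \ref{sec_appendix_regularity}) yields, on any $K \csubset U'$,
\[
\norm{\nabla v_h}_{L^q(K)} \leq C\bigl(\norm{v_h}_{L^q(U')} + \norm{F_h}_{L^q(U')}\bigr),
\]
with $C$ independent of $h$. The right-hand side is bounded uniformly in $h$, so the standard difference-quotient characterization of Sobolev spaces gives $\partial_s\nabla u \in L^q(K)$; varying $s$ and $K$ concludes $u \in W^{2,q}_{\mathrm{loc}}(U)$. The main obstacle will be precisely this uniform-in-$h$ Calder\'on-Zygmund bound: $A_h$ is merely continuous rather than constant, so a freezing-of-coefficients argument (carried out in the Appendix) is required, and one must control the modulus of continuity of $A_h$ independently of $h$.
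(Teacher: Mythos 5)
Your proof is correct, and it takes a slightly but genuinely different route from the paper's. The paper proceeds in two stages: first it tests the translated and untranslated equations against $\varphi = \Delta_h u\,\zeta^2$ and uses the monotonicity \eqref{mA_assumptions2} together with \eqref{mA_assumption sobo} to obtain a Caccioppoli-type bound $\int |\nabla \Delta_h u|^2 \leq C$, hence $u \in W^{2,2}$; only then does it differentiate the equation to get the \emph{fixed} linear system $-\mdiv(\partial_{\xi_i}\mA(x,\nabla u)\nabla v) = \mdiv(\partial_{x_i}\mA(x,\nabla u))$ for $v = \partial_{x_i}u$ and applies Proposition \ref{prop_conealpha_regularity}(a) qualitatively, with no uniformity issue. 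You instead encode the difference quotient itself as a solution of a linear divergence-form equation with $h$-dependent coefficients $A_h$ and apply the Calder\'on--Zygmund estimate uniformly in $h$, reaching $W^{2,q}$ in one step and bypassing the intermediate $W^{2,2}$ stage (which the paper needs in order to justify differentiating the equation). The price is exactly the one you identify: Proposition \ref{prop_conealpha_regularity}(a) is stated qualitatively, so you must inspect its proof to confirm that the constant depends only on the ellipticity bounds, the $L^\infty$ bound and the modulus of continuity of the coefficients, all of which you control uniformly in $h$; this is true but must be said. Two small points to tighten: (i) ellipticity of $A_h$ requires the \emph{segment} $\nabla u(y) + t(\nabla u(y+he_s)-\nabla u(y))$, not just its endpoints, to stay away from $0$ (where $\partial_\xi\mA$ degenerates for $p\neq 2$); this holds for $h$ small by the uniform continuity of $\nabla u$ on $U'$, since then the whole segment lies within $c_0/2$ of $\nabla u(y)$, and should be stated; (ii) the lower bound you extract from \eqref{mA_assumptions2} by taking $\zeta = \xi - th$ and letting $t\to 0$ is fine (the paper records the same consequence), but you need it at the intermediate points of the segment, which is why (i) matters.
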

\begin{proof}
Assume that $\nabla u$ is nonvanishing in $B(x_0,2r) \subset B_1$. Without loss of generality we assume that $x_0 = 0$.  By the previous theorem we have that $u \in C^{1,\alpha}(B_{\frac{3r}{2}})$. 
We  show first that $u \in W^{2,2}(B_r)$. 

Let us fix a direction $e_i$ and choose a cutoff function $\zeta \in C_0^\infty(B_{\frac{4r}{3}})$ such that $0 \leq \zeta \leq 1$ and $\zeta \equiv 1$ in $\overline{B}_r$. For a small $h>0$ denote the difference quotient 
by $\Delta_h u(x) = \frac{1}{h}(u(x+he_i) -u(x))$. We use the test function $\varphi = \Delta_h u(x) \zeta^2$ and integrate against the equations  
\[
-\mdiv \,\mA(x,\nabla u(x)) = 0  \qquad \text{and} \qquad -\mdiv \,\mA(x+he_i,\nabla u(x+he_i)) = 0.
\]
By rearranging terms, this gives that 
\begin{equation}
\label{mA sobolemma 1}
\begin{split}
&\int_{B_{\frac{4r}{3}}} \left( \mA(x+ he_i,\nabla u(x+ he_i)) - \mA(x+ he_i,\nabla u(x))  \right) \cdot (\nabla \Delta_h u) \zeta^2 \, dx\\
&\leq  \int_{B_{\frac{4r}{3}}} \big|  \mA(x+ he_i,\nabla u(x)) - \mA(x,\nabla u(x)) \big| |\nabla \Delta_h u| \zeta^2 \, dx\\
&+ 2\int_{B_{\frac{4r}{3}}}  \big|\mA(x+ he_i,\nabla u(x+ he_i)) - \mA(x+ he_i,\nabla u(x))  \big| \, |\Delta_h u| \zeta |\nabla \zeta| \, dx\\
&+ 2\int_{B_{\frac{4r}{3}}}   \big|\mA(x+ he_i,\nabla u(x)) - \mA(x,\nabla u(x))  \big| \, |\Delta_h u| \zeta |\nabla \zeta| \, dx.
\end{split}
\end{equation}

We use the assumption \eqref{mA_assumptions2},  the $C^{1,\alpha}$ regularity of $u$ and  the assumption $|\nabla u| \geq c >0$ to  estimate  the left hand side of \eqref{mA sobolemma 1}
\begin{equation}
\label{mA sobolemma 2}
\begin{split}
\int_{B_{\frac{4r}{3}}} &\left( \mA(x+ he_i,\nabla u(x+ he_i)) - \mA(x+ he_i,\nabla u(x))  \right) \cdot (\nabla \Delta_h u) \zeta^2 \, dx\\
 &\geq c h\int_{B_{\frac{4r}{3}}}  |\nabla \Delta_h u |^2 \zeta^2 \, dx
\end{split}
\end{equation}
for some constant $c>0$. Next we use \eqref{mA_assumption sobo} and Young's inequality to estimate the first term on the right hand side of  \eqref{mA sobolemma 1}
\begin{equation}
\label{mA sobolemma 3}
\begin{split}
&\int_{B_{\frac{4r}{3}}} \big|  \mA(x+ he_i,\nabla u(x)) - \mA(x,\nabla u(x)) \big| |\nabla \Delta_h u| \zeta^2 \, dx\\
 &\leq \frac{ch}{4}\int_{B_{\frac{4r}{3}}}  |\nabla \Delta_h u |^2 \zeta^2 \, dx + \frac{C}{h} \int_{B_{\frac{4r}{3}}} \big|  \mA(x+ he_i,\nabla u(x)) - \mA(x,\nabla u(x)) \big|^2  \, dx\\
&\leq\frac{ch}{4}\int_{B_{\frac{4r}{3}}} |\nabla \Delta_h u |^2 \zeta^2 \, dx + \frac{C}{h} \int_{B_{\frac{4r}{3}}}\Big| \int_0^h \frac{\partial }{\partial t} \mA(x+ te_i,\nabla u(x)) \, dt \Big|^2  \, dx\\
&\leq\frac{ch}{4}\int_{B_{\frac{4r}{3}}} |\nabla \Delta_h u |^2 \zeta^2 \, dx + C \int_0^h \int_{B_{\frac{4r}{3}}}  \big| \partial_{x_i} \mA(x+ te_i,\nabla u(x)) \big|^2  \, dx dt \\
&\leq\frac{ch}{4}\int_{B_{\frac{4r}{3}}} |\nabla \Delta_h u |^2 \zeta^2 \, dx + Ch  \int_{B_{\frac{3r}{2}}} f^2  \, dx. 
\end{split}
\end{equation}
Similarly we have 
\begin{equation}
\label{mA sobolemma 4}
\begin{split}
\int_{B_{\frac{4r}{3}}}   &\big|\mA(x+ he_i,\nabla u(x)) - \mA(x,\nabla u(x))  \big| \, |\Delta_h u| \zeta |\nabla \zeta| \, dx\\
&\leq Ch  \int_{B_{\frac{3r}{2}}}   \left(|\nabla u|^2+ f^2\right)\, dx.
\end{split}
\end{equation}
Moreover by the assumption \eqref{mA_assumptionsoneandsome},  the $C^{1,\alpha}$ regularity of $u$ and  $|\nabla u| \geq c >0$ we have that
\begin{equation}
\label{mA sobolemma 5}
\begin{split}
\int_{B_{\frac{4r}{3}}}  &\big|\mA(x+ he_i,\nabla u(x+ he_i)) - \mA(x+ he_i,\nabla u(x))  \big| \, |\Delta_h u| \zeta |\nabla \zeta| \, dx\\
&\leq C \int_{B_{\frac{4r}{3}}}  | \nabla u(x+ he_i) - \nabla u(x)| \, |\Delta_h u| \zeta |\nabla \zeta| \, dx \\
&\leq\frac{ch}{4}\int_{B_{\frac{4r}{3}}} |\nabla \Delta_h u |^2 \zeta^2 \, dx  + Ch \int_{B_{\frac{3r}{2}}} |\nabla  u|^2 \, dx.
\end{split}
\end{equation}

We divide \eqref{mA sobolemma 1} by  $h$ and deduce from  \eqref{mA sobolemma 2}, \eqref{mA sobolemma 3}, \eqref{mA sobolemma 4} and \eqref{mA sobolemma 5} that 
\[
\int_{B_{r}} |\nabla \Delta_h u |^2 \, dx \leq C\int_{B_{\frac{3r}{2}}} \left(f^2\,  + |\nabla u|^2 \right)\, dx.
\]
By letting  $h \to 0$ we obtain that $u \in W^{2,2}(B_r)$.

We may thus differentiate the equation
\[
-\mdiv \,\mA(x,\nabla u(x)) = 0 
\]
with respect to $x_i$ and conclude that $v = \partial_{x_i} u$ is a solution of 
\begin{equation}
\label{mA_linearisoitu}
-\mdiv \left( \partial_{\xi_i} \mA(x,\nabla u(x)) \nabla v \right) = \mdiv \left( \partial_{x_i} \mA(x,\nabla u(x))  \right). 
\end{equation}
By the assumption \eqref{mA_assumptions2}  the linear operator on the left hand side of \eqref{mA_linearisoitu} is elliptic.  Moreover by \eqref{mA_assumption sobo 2} $x \mapsto \partial_{\xi_i} \mA(x,\nabla u(x)) $
is  continuous. On the other  hand  the vector field $x \mapsto  \partial_{x_i} \mA(x,\nabla u(x)) $ on the right hand side of \eqref{mA_linearisoitu}  is $L^q$ integrable and therefore by the classical Calder\'on-Zygmund estimate (also contained in Proposition \ref{prop_conealpha_regularity}(a)) we have that 
$v \in W^{1,q}$. This implies $u \in W^{2,q}(B_r)$.
\end{proof}

\begin{proof}[Proof of Theorem \ref{A-harm-coord}]
We may assume that $x_0 = 0$, $B_1 \subset \Omega$, and $S$ is the identity matrix (otherwise consider $(Sx)^j$ instead of $x^j$ below). The $\mA$-harmonic coordinates will be obtained as the map $U = (u^1, \ldots, u^n)$, where each $u^j = u^j_r$ solves for $r$ small the Dirichlet problem 
\[
\mdiv\,\mA(x,\nabla u^j) = 0 \   \   \text{in } B_r, \quad u^j|_{\partial B_r} = x^j.
\]
Let us fix $j \in \{1, \dots, n\}$. Then $v^j(x) = x^j$ is a solution of $\mdiv\,\mA(0,\nabla v^j) = 0$ in $B_r$
and 
\begin{equation*}
\abs{\nabla v^j} = 1 \text{ in } B_r.
\end{equation*}
We use Lemma  \ref{prop_two_equations_close} to conclude 
\begin{align*}
\norm{\nabla u^j - \nabla v^j}_{L^{\infty}(B_{r/4})} &\leq C r^{\frac{2\alpha\beta}{n+2\beta}}.
\end{align*}
Thus there is $r > 0$ such that $\abs{\nabla u^j-\nabla v^j} \leq \frac{1}{100}$ in $B_{r/4}$  and, in particular, $\nabla u^j$ is nonvanishing in $B_{r/4}$. Proposition \ref{Aharmonic_regularity_nonvanishing} shows that $u^j \in C^{1+\alpha}(B_{r/8})$.

Now each $u^j$ is $C^{1+\alpha}$ near $0$ and the Jacobian matrix of $U = (u^1, \ldots, u^n)$ is invertible at $0$. The inverse function theorem shows that $U$ is a $C^1$ diffeomorphism near $0$. Moreover $U$ is $C^{1+\alpha}$ and so is $U^{-1}$ since 
\begin{equation}\label{Inv_reg}
D(U^{-1})= (DU \circ U^{-1})^{-1}.
\end{equation}
The fact that any $C^1$ diffeomorphism whose coordinate functions are $\mA$-harmonic is a $C^{1+\alpha}$ diffeomorphism follows from Proposition \ref{Aharmonic_regularity_nonvanishing}.
\end{proof}

\begin{proof}[Proof of Theorem \ref{mainthm_pharmonic_coordinates}]
As discussed above, in any local coordinate system where the metric has $C^{\alpha}$ regularity, the $p$-harmonic equation is of the form $\mdiv\,\mA(x,\nabla u) = 0$ where $\mA$ is given by \eqref{A_def} and satisfies \eqref{mA_assumptions1}--\eqref{mA_assumptions2}. The existence and regularity of $p$-harmonic coordinates is then a consequence of Theorem \ref{A-harm-coord} for $s<1$. The case $s>1$ is contained in~\cite{LS_MRL}.

For $s=1$, we have $g\in W^{1,q}$ for all $q < \infty$ and thus by Proposition~\ref{Aharmonic sobolev regularity} the $p$-harmonic coordinates are $W^{2,q}$ regular. This regularity is enough for us to write the $p$-harmonic equation for each coordinate function $u = u^j$ in nondivergence form as 
\begin{equation*}
a^{jk} \partial_{jk} u=f
\end{equation*}
where the coefficients $a^{jk}$ and the function $f$ are defined by 
\begin{align*}
 a^{jk}(x) &=\p_{\xi_k}\mA^j(x,\nabla u(x)), \\
f(x) &=-\p_{x_j}\mA^j(x,\nabla u(x)).
\end{align*}
Now the linear equation $a^{jk} \partial_{jk} u = f$ is elliptic since $\nabla u$ is nonvanishing. Since $u\in W^{2,q}$ for all $q < \infty$, we have $\nabla u \in C^{\gamma}$ for any $\gamma < 1$ by the Sobolev embedding. Consequently $a^{jk}\in C^{\gamma}$ for $\gamma < 1$ and $f\in C^0$. Thus $u\in C^2_*$, see e.g.~\cite[Theorem 14.4.2]{Taylor3}. That the inverse of the $p$-harmonic coordinates are also of this regularity follows from the formula~\eqref{Inv_reg}. In deriving this conclusion, note that Zygmund spaces are closed under pointwise multiplication~\cite{Triebel}.

Suppose that the metric in original coordinates at $x_0$ is represented by the matrix $G_0$, and let $V$ be the corresponding $\mA$-harmonic coordinates. The metric in $p$-harmonic coordinates at $x_0$ is given by 
$$
DV(0)^{-t} G_0 DV(0)^{-1}.
$$
Choosing $V$ so that $DV(0)$ is very close to $G_0^{1/2}$, we can arrange that the metric in $p$-harmonic coordinates is arbitrarily close to identity at $x_0$.
\end{proof}

\begin{cor}\label{W1q}
Let $(M,g)$ be a Riemannian manifold with $g\in W^{1,q}$, $q>n$, in a local coordinate chart about some point $x_0 \in M$. There exists a $p$-harmonic coordinate system on a neighborhood of $x_0$ that is a local $W^{2,q}$ diffeomorphism. Moreover, the Riemannian metric is $W^{1,q}$ in these $p$-harmonic coordinates.
\end{cor}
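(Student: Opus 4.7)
My plan is to derive the corollary essentially by combining the existence statement of Theorem~\ref{mainthm_pharmonic_coordinates} with the higher regularity result of Proposition~\ref{Aharmonic sobolev regularity}. First I observe that since $q>n$, the Morrey embedding gives $g\in W^{1,q}\subset C^{\alpha}$ with $\alpha=1-n/q>0$, so the hypotheses of Theorem~\ref{mainthm_pharmonic_coordinates} are satisfied. Applying that theorem, I obtain a $p$-harmonic coordinate system $U=(u^1,\ldots,u^n)$ near $x_0$ that is a $C^{1+\alpha}$ diffeomorphism; in particular $\nabla u^j$ is nonvanishing on a neighborhood of $x_0$ for each $j$.

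Next I verify that the vector field $\mA$ in \eqref{A_def} satisfies the additional assumptions \eqref{mA_assumption sobo} and \eqref{mA_assumption sobo 2} needed to invoke Proposition~\ref{Aharmonic sobolev regularity}. The $x$-derivatives of $\mA(x,\xi)$ are expressible in terms of $\partial_{x_i} g^{jk}$ and $\partial_{x_i} \abs{g}^{1/2}$ times homogeneous polynomials in $\xi$ of degree $p-1$; since $g\in W^{1,q}$ these derivatives are dominated by $f\abs{\xi}^{p-1}$ with $f\in L^{q}$. The continuity of $\partial_{\xi_i}\mA$ in $(x,\xi)$ follows from the continuity of $g$ (noting that the $\xi$-derivatives exist classically away from $\xi=0$ and the formulas involve only continuous coefficients). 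Thus Proposition~\ref{Aharmonic sobolev regularity} applies to each coordinate function $u^j$ on a neighborhood where $\nabla u^j\neq 0$, giving $u^j\in W^{2,q}_{\text{loc}}$. Hence $U\in W^{2,q}$ near $x_0$.

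For the inverse map I use the identity $D(U^{-1})=(DU\circ U^{-1})^{-1}$ from \eqref{Inv_reg}. Since $U$ is a $C^{1+\alpha}$ diffeomorphism and $DU\in W^{1,q}$ with $\det DU$ bounded away from zero, the matrix inverse $(DU)^{-1}$ is a smooth function of the entries of $DU$, and because $W^{1,q}$ is a Banach algebra for $q>n$, we get $(DU)^{-1}\in W^{1,q}$. Composing with the bi-Lipschitz map $U^{-1}$ preserves $W^{1,q}$ by the change-of-variables formula (the Jacobian of $U^{-1}$ is bounded and the chain rule gives an $L^q$ gradient). Therefore $D(U^{-1})\in W^{1,q}$, i.e.\ $U^{-1}\in W^{2,q}$, and $U$ is a local $W^{2,q}$ diffeomorphism.

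Finally, for the regularity of the metric in the new coordinates, the transformation rule gives $\tilde g = (DU^{-1})^{t}(g\circ U^{-1})(DU^{-1})$. Each factor is $W^{1,q}$: $DU^{-1}$ by the previous step, and $g\circ U^{-1}$ since $g\in W^{1,q}$ and $U^{-1}$ is bi-Lipschitz $C^{1+\alpha}$. Using once more that $W^{1,q}$ is an algebra for $q>n$, the product $\tilde g\in W^{1,q}$, completing the proof. The only nontrivial step is the verification of the divergence-form hypotheses of Proposition~\ref{Aharmonic sobolev regularity}, together with the book-keeping ensuring $W^{1,q}$ is preserved under inversion and pullback; both rely essentially on the algebra property available exactly because $q>n$.
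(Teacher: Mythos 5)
Your proposal is correct and follows essentially the same route as the paper: Morrey embedding to get $C^\alpha$ and hence $C^{1+\alpha}$ $p$-harmonic coordinates via Theorem \ref{mainthm_pharmonic_coordinates}, verification of \eqref{mA_assumption sobo}--\eqref{mA_assumption sobo 2} to apply Proposition \ref{Aharmonic sobolev regularity} for $W^{2,q}$ regularity, and then the inversion formula \eqref{Inv_reg}, the algebra property of $W^{1,q}$ for $q>n$, and the transformation rule for the metric. The only difference is that you spell out the verification of the hypotheses of Proposition \ref{Aharmonic sobolev regularity}, which the paper merely asserts.
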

\begin{proof}
Since $W^{1,q}\subset C^\alpha$ for some $\alpha >0$, there exists a $C^{1+\alpha}$ regular $p$-harmonic coordinate system $(v^i)$ on a neighborhood of $x_0$ by Theorem~\ref{mainthm_pharmonic_coordinates}. Let us denote $V=(v^1,\ldots, v^n)$. For  $g\in W^{1,q}$, the function $\mathcal{A}$ defining $p$-harmonic functions satisfies the conditions \eqref{mA_assumption sobo} and \eqref{mA_assumption sobo 2}, and thus by Proposition~\ref{Aharmonic sobolev regularity}, we have that $V \in W^{2,q}$. By the formula
 \[
D(V^{-1})= (DV\circ V^{-1})^{-1}
\]
it is easy to see (by using integration by substitution and the standard cofactor formula for an inverse matrix) that the same holds for the inverse of $V$. Note that $W^{1,q}$ is an algebra, see Section~\ref{sec_weyl}. 

That the Riemannian metric is $W^{1,q}$ in $p$-harmonic coordinates follows from the coordinate transformation rule for the Riemannian metric
$$
\tilde{g}=D(V^{-1})^T g|_{V^ {-1}} D(V^ {-1}).
$$
Here $g$ and $\tilde{g}$ are the metrics in the original and in the $p$-harmonic coordinates respectively.
\end{proof}

\section{Regularity of conformal and 1-quasiregular mappings} \label{sec_regularity_conformal}

The first application of $n$-harmonic coordinates is to the regularity of $C^1$ conformal mappings, or $W^{1,n}$ $1$-quasiregular mappings, between Riemannian manifolds with $C^s$, $s > 0$, $s\neq 1$, metric tensors. These results extend the corresponding results in~\cite{Iwaniec_thesis, LS_MRL} that covered H\"older exponents $s>1$, and the earlier results~\cite{Ferrand, Manfredi}. 

The proofs are essentially the same as in~\cite{LS_MRL} and are only sketched. We however cover a slight lapse in the proof of~\cite[Theorem 3.1]{LS_MRL} concerning the integer values $s=2,3,4,\ldots$ of the regularity parameter $s$. We also comment on the case $s=1$ after the theorem.

\begin{thm} \label{thm_c1_conformal_regularity}
Let $(U,g)$ and $(V,h)$ be Riemannian manifolds such that $g,h \in C^s$, $s > 0$, $s\neq 1$. If $\phi: U \to V$ is a $C^1$ diffeomorphism that is conformal in the sense that $\phi^* h = cg$ for some continuous positive function $c$, then $\phi$ is a $C^{s+1}$ diffeomorphism from $U$ onto $V$.
\end{thm}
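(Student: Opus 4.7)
The plan is to exploit the conformal invariance of the $n$-Laplacian: since composition with a conformal map preserves $n$-harmonicity, one may transport $n$-harmonic coordinates from the target manifold $(V,h)$ to the source $(U,g)$ via $\phi$ and thereby reduce the regularity of $\phi$ to that of $p$-harmonic coordinates supplied by Theorem \ref{mainthm_pharmonic_coordinates}.

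First I would fix $x_0 \in U$ and apply Theorem \ref{mainthm_pharmonic_coordinates} with $p = n$ near $\phi(x_0) \in V$ to obtain $n$-harmonic coordinates $(y^1, \ldots, y^n)$ of class $C^{s+1}_*$ in a background smooth chart $\tilde{y}$ on $V$. Setting $x^j := y^j \circ \phi$, the conformal invariance of the $n$-Laplacian together with $\phi^* h = cg$ yields that each $x^j$ is $n$-harmonic on $(U, g)$. Because $\phi$ is a $C^1$ diffeomorphism and $(y^j)$ is a $C^1$ coordinate system, $(x^j)$ is a $C^1$ coordinate system near $x_0$, and the last clause of Theorem \ref{mainthm_pharmonic_coordinates} promotes it to $C^{s+1}_*$. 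Writing $\tilde{x}$ for a background smooth chart on $U$, the identity $y \circ \phi = x$ gives
\[
\tilde{y} \circ \phi \circ \tilde{x}^{-1} = (\tilde{y} \circ y^{-1}) \circ (x \circ \tilde{x}^{-1}).
\]
Both factors on the right are $C^{s+1}_*$ diffeomorphisms, and their inverses inherit this regularity via $D(F^{-1}) = (DF \circ F^{-1})^{-1}$ together with the algebra property of Zygmund spaces. Hence $\phi \in C^{s+1}_*$. For non-integer $s$ this coincides with $C^{s+1}$, completing the proof.

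The remaining case is integer $s \geq 2$, where $C^{s+1}_*$ strictly contains $C^{s+1}$; this is precisely the slight lapse in \cite{LS_MRL} being filled here. I would close the gap by a bootstrap. Apply the just-proved non-integer case with $s' = s - \eps$ for small $\eps > 0$ to obtain $\phi \in C^{s-\eps+1}$, which gives classical derivatives of $\phi$ through order $s$. Substituting this information into the quasilinear $n$-harmonic equation for $x^j$ and linearizing around the nonvanishing gradient produces a linear divergence-form elliptic equation whose coefficients depend algebraically on $g \in C^s$ and $\nabla x^j \in C^{s-\eps}$. Differentiating this equation $s - 1$ times in the classical sense yields an elliptic equation with $C^{1+\alpha}$ coefficients (any $\alpha < 1 - \eps$), so classical Schauder theory gives $\partial^{\gamma} x^j \in C^{2}$ for every multi-index $\gamma$ of order $s-1$, whence $x^j \in C^{s+1}$ and $\phi \in C^{s+1}$. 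The main obstacle is exactly this endpoint bookkeeping at integer $s$; the preliminary non-integer step provides just enough classical differentiability of the coefficients for the iterated Schauder argument to close at the integer target.
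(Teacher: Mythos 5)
Your treatment of the non-integer case is exactly the paper's argument: pull back $n$-harmonic coordinates $v$ from the target via $\phi$, use the conformal invariance of the $n$-Laplacian to see that $u=v\circ\phi$ is again an $n$-harmonic coordinate system, invoke Theorem \ref{mainthm_pharmonic_coordinates} for the $C^{s+1}_*$ regularity of $u$ and $v$, and conclude from $\phi=v^{-1}\circ u$ since $C^{s+1}=C^{s+1}_*$ is closed under composition and inversion for non-integer $s$. That part is fine.

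The integer case ($s\in\{2,3,\dots\}$) is where your argument has a genuine gap. Your proposed Schauder bootstrap on the equation for the coordinate functions cannot produce $x^j\in C^{s+1}$, because the only input to that equation is the metric $g$, which is merely $C^s$. Concretely: writing the $n$-harmonic equation in non-divergence form $a^{jk}\partial_{jk}x^j=f$, the right-hand side $f$ involves $\partial g$ and hence lies only in $C^{s-1}$; after differentiating $s-1$ times the new right-hand side involves $\partial^s g\in C^0$, and Schauder theory with continuous (or $C^{-1+\beta}$ for all $\beta<1$) data yields only $C^{1+\beta}$ for every $\beta<1$, i.e.\ $C^2_*$, never $C^2$. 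This is precisely the integer-exponent failure of Schauder estimates that makes Theorem \ref{mainthm_pharmonic_coordinates} assert only $C^{s+1}_*$ regularity for the coordinates in the first place; your bootstrap stalls at $\bigcap_{\eps>0}C^{s+1-\eps}$ and cannot reach $C^{s+1}$. The paper's resolution is structurally different: the extra regularity does not come from the coordinate functions at all, but from the conformality relation itself. One first shows that the conformal factor $c=e^{2f}$ is genuinely $C^s$ by applying the conformal transformation laws of the scalar curvature and the Ricci tensor to $\phi^*h=e^{2f}g$ (solving first for $\Delta f$ and then for $\partial_{ij}f$, each landing in $C^{s-2}$), and then bootstraps $\phi$ directly through the pointwise Christoffel transformation identity
\[
\partial_{i}\partial_{j}\phi^{m}=\Gamma^{k}_{ij}(cg)\,\partial_{k}\phi^{m}-\Gamma^{m}_{kl}(h)|_{\phi}\,\partial_{i}\phi^{k}\partial_{j}\phi^{l},
\]
whose right-hand side is $C^{s-1}$ once $c\in C^s$; since this is an exact formula for $\partial^2\phi$ rather than an elliptic equation, there is no loss at the integer exponent. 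You would need to replace your Schauder step with an argument of this kind that exploits the information carried by both metrics through the conformal relation.
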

\begin{proof}
Let first $g, h \in C^s$ with $s > 0$. Fix $x_0 \in U$, and let $v$ be $n$-harmonic coordinates near $\phi(x_0)$. Since $h \in C^s$, Theorem \ref{mainthm_pharmonic_coordinates} ensures that $v \in C^{s+1}_*$. Let $u = v \circ \phi$. Since $\phi$ is a $C^1$ conformal diffeomorphism, a straightforward calculation \cite[Theorem 3.1]{LS_MRL} shows that $u$ is an $n$-harmonic coordinate system near $x_0$. Now $g$ is $C^s$, which implies that $u$ is $C^{s+1}_*$ by Theorem \ref{mainthm_pharmonic_coordinates}. Thus $\phi = v^{-1}Ê\circ u$ near $x_0$ and $\phi^{-1} = u^{-1} \circ v$ near $\phi(x_0)$.

Assume now that $s > 0$ is not an integer. Then $u$, $v$ and their inverses are $C^{s+1}$, and thus $\phi$ and also $\phi^{-1}$ are $C^{s+1}$ (the spaces $C^{s+1} = C^{s+1}_*$ are closed under composition for $s > 0$ noninteger).

Assume next that $s \in \{2, 3, 4, \ldots \}$. We first show that the conformal factor $c$ in
\[
\phi^* h = cg
\]
is a $C^s$ function (this part of the proof is missing in~\cite[Theorem 3.1]{LS_MRL}). The argument above yields that $\phi \in C^{s+1-\eps}$ and $c\in C^{s-\eps}$ for any $\eps >0$.

Let us recall the transformation rules for the Ricci tensor and the scalar curvature under conformal scaling. If $\tilde{g}=e^{2f}g$, we have
\begin{align}\label{cnfcng}
R_{ij}(\tilde{g})&=R_{ij}(g)-(n-2)(\nabla_i\p_jf-(\p_if)(\p_jf))+(\Delta f-(n-2)|df|_g^2)g, \nonumber \\
R(\tilde{g})&=e^{-2f}(R(g)+2(n-1)\Delta f -(n-2)(n-1)|df|_g^2)
\end{align}
if $f$ and $g$ are sufficiently smooth, see e.g.~\cite{Besse}. Since $\phi$ is at least $C^{3-\eps}$ and $g,h$ are at least $C^2$, we have 
\[
e^{2f}:=c=g^{-1}\phi^*h\in C^{2-\eps} \text{ for any $\eps > 0$}.
\]
This, together with the assumption that $g,h\in C^2$, yield that the identities~\eqref{cnfcng} hold in the distributional sense as described later in Section~\ref{sec_weyl}. In addition to the identities above, the tensoriality identities
\begin{align*}
\phi^*R_{ij}(h)&=R_{ij}(\phi^*h), \\
 \phi^*R(h)&=R(\phi^*h)
\end{align*}
also hold in the distributional sense.

Let us now apply the scalar curvature to both sides of the equation
\[
\phi^*h=e^{2f}g.
\]
The identities above yield 
\[
\phi^* R(h) = e^{-2f}(R(g)+2(n-1)\Delta f -(n-2)(n-1)|df|_g^2).
\]
Since $g, h \in C^s$ and $\phi \in C^{s+1-\eps}$, $f \in C^{s-\eps}$, we may solve for $\Delta f$ which implies that $\Delta f \in C^{s-2}$. Now applying the Ricci tensor to $\phi^* h = e^{2f} g$, using the identities above and using that $\Delta f \in C^{s-2}$, gives that 
\begin{equation}\label{pp}
\p_{ij} f \in C^{s-2} \text{ for all $i,j=1,\ldots n$.}
\end{equation}
Thus $f$, and consequently $c$, is a $C^s$ function.

The rest of the proof of the theorem now follows by expressing the second derivatives of $\phi$ by the coordinate transformation formula 
\[
\partial_{i} \partial_{j} \phi^m=\Gamma_{ij}^k(cg) \partial_{k} \phi^m - \Gamma_{kl}^m(h)|_\phi \partial_{i} \phi^k \partial_{j} \phi^l
\]
of the Christoffel symbols (as in~\cite{Calabi, Taylor}). Since $c\in C^s$, the right hand side of this equation is in $C^{s-1}$ completing the proof.
\end{proof}

The reason we could not prove the theorem for $s=1$ is that in this case the relation \eqref{pp} for the second derivatives of the conformal factor would involve terms containing second distributional derivatives of $C^1$ functions. We could not verify that the conformal factor is in $C^1$ in this case.

The next result concerns the regularity of Riemannian $1$-quasiregular mappings and the regularity of conformal bi-Lipschitz mappings. We refer to~\cite{LS_MRL,dissertation} for details about 	Riemannian quasiregular mappings. 
The proof is the same as the proof of Theorem 4.4 in~\cite{LS_MRL}, except for the additional details for $s=2,3,4,\ldots$ given above. We omit the proof.

\begin{thm}\label{mainthm}
Let $(M,g)$ and $(N,h)$ be Riemannian manifolds, $n\geq 3$, with $g, h\in C^s$, $s>0$, $s\neq 1$. Let $\phi: M\rightarrow N$ be a non-constant mapping. Then the following are equivalent:
\begin{align*}
 &\phi  \mbox{ is a Riemannian $1$-quasiregular mapping}, \\ 
 &\phi  \mbox{ is locally bi-Lipschitz and } \phi^*h=c\,g \mbox{ a.e.,} \\ 
 &\phi  \mbox{ is a local $C^1$ diffeomorphism and } \phi^*h=c\,g, \\ 
 &\phi  \mbox{ is a local $C^{s+1}$ diffeomorphism and } \phi^*h=c\,g. 
\end{align*}
\end{thm}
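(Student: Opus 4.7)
The plan is to prove all four conditions equivalent via the cycle \eqref{1qc_anal} $\Rightarrow$ \eqref{weak_form} $\Rightarrow$ \eqref{1dif} $\Rightarrow$ \eqref{rdif} $\Rightarrow$ \eqref{1qc_anal}. Of these, \eqref{rdif} $\Rightarrow$ \eqref{1qc_anal} is immediate, since a local $C^{s+1}$ diffeomorphism satisfying $\phi^*h=cg$ lies in $W^{1,n}_{\mathrm{loc}}$ and has vanishing distortion. Likewise, \eqref{1dif} $\Rightarrow$ \eqref{rdif} is exactly Theorem~\ref{thm_c1_conformal_regularity} applied in local coordinates: the conformal factor is automatically continuous and positive when $\phi \in C^1$ and $g,h$ are continuous, since it equals $(\det\phi^*h/\det g)^{1/n}$. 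The substance of the argument lies in the remaining two implications, both of which rest on the same conformal-invariance principle: if $\phi^*h = cg$, then for any $v$ that is $n$-harmonic on an open subset of $N$, a direct change-of-variables computation in the weak form of the $n$-Laplace equation shows that $v \circ \phi$ is weakly $n$-harmonic on its preimage in $M$, provided the composition lies in a Sobolev class admissible as a test function.

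For \eqref{weak_form} $\Rightarrow$ \eqref{1dif}, fix $x_0 \in M$ and use Theorem~\ref{mainthm_pharmonic_coordinates} to choose $n$-harmonic coordinates $v=(v^1,\ldots,v^n)$ near $\phi(x_0)$, which are in particular $C^{1+\alpha}$ diffeomorphisms. Since $\phi$ is locally bi-Lipschitz, $u^i := v^i \circ \phi$ lies in $W^{1,\infty}_{\mathrm{loc}} \subset W^{1,n}_{\mathrm{loc}}$, and each $u^i$ is weakly $n$-harmonic on $M$ by the invariance principle above. The Jacobian of $u=(u^1,\ldots,u^n)$ is bounded below in absolute value a.e.\ because $v$ is a diffeomorphism and $\phi$ is bi-Lipschitz with nonvanishing Jacobian a.e., so $|\nabla u^i|$ is bounded below on a suitable neighborhood of $x_0$. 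Proposition~\ref{Aharmonic_regularity_nonvanishing} then yields $u \in C^{1+\alpha}$, the inverse function theorem shows $u$ is a local $C^{1+\alpha}$ diffeomorphism, and consequently $\phi = v^{-1}\circ u$ is a local $C^1$ (in fact $C^{1+\alpha}$) diffeomorphism near $x_0$.

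For \eqref{1qc_anal} $\Rightarrow$ \eqref{weak_form}, begin from the fact that a Riemannian $1$-quasiregular mapping is by definition a non-constant continuous $W^{1,n}_{\mathrm{loc}}$ mapping whose distortion equals $1$ a.e., equivalently $\phi^*h = c g$ a.e.\ for some measurable $c\geq 0$, and is automatically open and discrete by Reshetnyak's theorem. Choose $n$-harmonic coordinates $v$ near $\phi(x_0)$; composition with the Lipschitz map $v$ yields $u^i = v^i \circ \phi \in W^{1,n}_{\mathrm{loc}}$, and each $u^i$ is weakly $n$-harmonic on $M$. By Proposition~\ref{Aharmonic_regularity} each $u^i$ is a priori $C^{1+\beta}$. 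Openness and discreteness of $\phi$ together with the fact that $v$ is a diffeomorphism prevent $\nabla u$ from degenerating on an open set (a vanishing Jacobian would force $u$, and hence $\phi$, to fail to be discrete and open near $x_0$). Once a neighborhood of $x_0$ is chosen on which $|\nabla u^i|$ is bounded below, Proposition~\ref{Aharmonic_regularity_nonvanishing} upgrades the regularity to $C^{1+\alpha}$, and the inverse function theorem applied to $u = v\circ \phi$ gives that $\phi = v^{-1}\circ u$ is a local $C^{1+\alpha}$ diffeomorphism, which in particular is locally bi-Lipschitz with $\phi^*h = cg$ everywhere.

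The main obstacle is the step \eqref{1qc_anal} $\Rightarrow$ \eqref{weak_form}: one must ensure that the pulled-back coordinates $u = v \circ \phi$ form a local diffeomorphism, or equivalently that $\phi$ has no branch points. This requires combining the low-regularity $n$-harmonic coordinate machinery of Theorem~\ref{mainthm_pharmonic_coordinates} and Proposition~\ref{Aharmonic_regularity_nonvanishing} with Reshetnyak's openness and discreteness results for quasiregular mappings, and it is at this juncture that the present argument truly extends \cite{LS_MRL} from $s>1$ to arbitrary $s>0$, $s\neq 1$, the integer case $s=2,3,\ldots$ being handled exactly by the additional bootstrap in the proof of Theorem~\ref{thm_c1_conformal_regularity}.
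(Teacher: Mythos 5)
Your overall architecture --- the cycle \eqref{1qc_anal} $\Rightarrow$ \eqref{weak_form} $\Rightarrow$ \eqref{1dif} $\Rightarrow$ \eqref{rdif} $\Rightarrow$ \eqref{1qc_anal}, the morphism/conformal-invariance property of $n$-harmonic functions, $n$-harmonic coordinates in the target via Theorem \ref{mainthm_pharmonic_coordinates}, the regularity upgrade through Propositions \ref{Aharmonic_regularity} and \ref{Aharmonic_regularity_nonvanishing}, and Theorem \ref{thm_c1_conformal_regularity} for the final step --- is exactly the intended one; the paper itself omits the proof and defers to \cite[Theorem 4.4]{LS_MRL} together with the integer-$s$ bootstrap already supplied in the proof of Theorem \ref{thm_c1_conformal_regularity}. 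The implication \eqref{weak_form} $\Rightarrow$ \eqref{1dif} as you present it is sound: bi-Lipschitzness gives $\lvert \det Du\rvert \geq c$ and $\lVert Du \rVert \leq L$ a.e., hence $\lvert \nabla u^i \rvert \geq c/L^{n-1}$ a.e., and continuity of $\nabla u^i$ (from Proposition \ref{Aharmonic_regularity}) upgrades this to an everywhere bound on a neighborhood.

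There is, however, a genuine gap in \eqref{1qc_anal} $\Rightarrow$ \eqref{weak_form}, precisely at the point you yourself flag as the main obstacle. Your justification --- ``a vanishing Jacobian would force $u$, and hence $\phi$, to fail to be discrete and open near $x_0$'' --- is false as a general principle: branched quasiregular maps such as $z \mapsto z^2$ in the plane, or the winding maps in $\mR^n$, are open and discrete yet have nonempty branch sets where the Jacobian degenerates. Openness and discreteness only rule out $\nabla u \equiv 0$ on an open set, which is not enough to produce the neighborhood of $x_0$ on which $\lvert \nabla u^i\rvert$ is bounded below that Proposition \ref{Aharmonic_regularity_nonvanishing} and the inverse function theorem require; if $x_0$ itself were a branch point, no such neighborhood would exist. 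Note also that your argument nowhere uses the hypothesis $n \geq 3$, whereas the statement is false for $n = 2$ without an a priori local homeomorphism assumption (as the remark following the theorem indicates) --- a clear sign that the missing ingredient is nontrivial. What is actually needed is the exclusion of branch points for Riemannian $1$-quasiregular maps in dimension $n \geq 3$: in \cite{LS_MRL} this is extracted from the structure of the composed map $u = v \circ \phi$, which is $C^{1+\beta}$ and satisfies the exact conformality relation $\lVert Du \rVert^n = C J_u$ pointwise by continuity, combined with Liouville-type rigidity/degree-theoretic facts for quasiregular mappings in dimension $\geq 3$; simply invoking Reshetnyak's openness and discreteness does not suffice. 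A secondary, more routine point: the morphism property for a map that is merely $W^{1,n}_{\mathrm{loc}}$ and $1$-quasiregular (rather than bi-Lipschitz) requires the change-of-variables machinery for quasiregular maps (condition (N), the area formula with multiplicity), which you acknowledge only in passing.
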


Finally, we remark that the theorem holds also for $n=2$ if we a priori assume $\phi$ to be a local homeomorphism. This gives an extension of~\cite[Theorem 4.5]{LS_MRL} to H\"older exponents $s>0$, $s\neq 1$.

\section{Weyl tensor} \label{sec_weyl}

We now consider low regularity Riemannian metrics and establish an elliptic regularity result for the Weyl tensor. We first assume that 
$$
g_{jk} \in W^{1,2} \cap L^{\infty}
$$
in some system of local coordinates near a point. (In this section all function spaces are assumed to be of the local variety near a point, that is, we write $W^{1,2}$ instead of $W^{1,2}_{\mathrm{loc}}(U)$ etc.) This seems to be a minimal assumption for defining the Weyl tensor: the set $W^{1,2} \cap L^{\infty}$ is an algebra under pointwise multiplication \cite{KatoPonce}, \cite{BadrBernicotRuss}, and therefore $g^{jk}, \abs{g} \in W^{1,2} \cap L^{\infty}$.  We see that 
\begin{gather*}
\Gamma_{ab}^c = \frac{1}{2} g^{cr}(\partial_a g_{br} + \partial_b g_{ar} - \partial_r g_{ab}) \in L^2, \\
R_{abc}^{\phantom{abc}d} = \partial_a \Gamma_{bc}^d - \partial_b \Gamma_{ac}^d + \Gamma_{bc}^m \Gamma_{am}^d - \Gamma_{ac}^m \Gamma_{bm}^d \in W^{-1,2} + L^1, \\
R_{bc} = R_{abc}^{\phantom{abc}a} \in W^{-1,2} + L^1.
\end{gather*}
We write this schematically as 
\begin{align*}
\Gamma_{ab}^c &= O(g^{-1} \partial g), \\
R_{abc}^{\phantom{abc}d} &= O(\partial(g^{-1} \partial g) + (g^{-1} \partial g)^2), \\
R_{bc} &= O(\partial(g^{-1} \partial g) + (g^{-1} \partial g)^2)
\end{align*}
where $O(g^{-1} \partial g)$ denotes a finite linear combination of terms of the form $g^{pq} \partial_j g_{rs}$, etc. Now, since multiplication by an $W^{1,2} \cap L^{\infty}$ function maps $W^{1,q}$ to $W^{1,2}$ for any $q > n$, it also maps $W^{-1,2}$ to $W^{-1,q'}$, and we have 
\[
R_{abcd} = O \left(g \partial(g^{-1} \partial g) + g (g^{-1} \partial g)^2 \right) \in W^{-1,q'} + L^1.
\]

Since 
\[
P_{ab} = \frac{1}{n-2} \left[ R_{ab} - \frac{1}{2(n-1)} R g_{ab} \right]
\]
where $R = g^{rs} R_{rs}$ is the scalar curvature, we have 
\begin{align}
W_{abcd} &= R_{abcd} + P_{ac} g_{bd} - P_{bc} g_{ad} + P_{bd} g_{ac} - P_{ad} g_{bc} \notag \\
 &= (R_{abcd}+R_{ac}g_{bd}-R_{bc}g_{ad})+\l(\frac{1}{n-2}-1\r)(R_{ac}g_{bd}-R_{bc}g_{ad}) \notag \\
  &\qquad +\frac{1}{n-2}(R_{bd}g_{ac} -R_{ad}g_{bc})-\frac{R}{(n-1)(n-2)}(g_{ac}g_{bd}-g_{bc}g_{ad}) \notag \\
  &= O \left(g(1+g^{-1} g) \left[ \partial(g^{-1} \partial g) + (g^{-1} \partial g)^2 \right] \right) \in W^{-1,q'} + L^1. \label{weyl_schematic}
\end{align}
This shows that if $g_{jk} \in W^{1,2} \cap L^{\infty}$ in some system of local coordinates, then one can make sense of the components $W_{abcd}$ of the Weyl tensor as elements in $W^{-1,q'} + L^1$ in these coordinates for any $q > n$. It is easy to see that the identity $W(cg) = cW(g)$ remains true if $c, g_{jk} \in W^{1,2} \cap L^{\infty}$. 

We wish to understand the equation $W_{abcd}(g) = 0$ as a quasilinear divergence form system for the matrix elements $g_{jk}$. Below we write $D = -i\nabla$.

\begin{lemma} \label{lemma_weyl_divergenceform}
Let $g_{jk} \in W^{1,2} \cap L^{\infty}$ in some local coordinates. If $W_{abcd}(g) = 0$ in these coordinates and if $u \in W^{1,2} \cap L^{\infty}$ is the column vector consisting of the matrix elements $g_{jk}$, then $u$ is a distributional solution of 
\[
D_l(A^{lm}(u) D_m u) = B(u, Du)
\]
where $B$ is given by 
\[
B_a(u, Du) = B_a^{lm}(u) D_l u \cdot D_m u, \qquad 1 \leq a \leq M,
\]
and each $A^{lm}$ (resp.\ $B_a^{lm}$) is an $M \times N$ (resp.\ $N \times N$) matrix function whose entries are rational functions of the components of $u$. Moreover, each $A^{lm}$ and $B_a^{lm}$ is smooth in the set of vectors $u$ corresponding to positive definite symmetric matrices. One can choose $M = n^4$ and $N = \frac{n(n+1)}{2}$.
\end{lemma}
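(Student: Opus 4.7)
The plan is to unfold the schematic formula \eqref{weyl_schematic} into a fully explicit expression, separating the second-derivative contributions (which will form the divergence-form principal part) from the remaining terms that are quadratic in first derivatives (which will become the right-hand side $B(u, Du)$). The argument is entirely algebraic: the only claim to verify is that the rearrangement can be carried out using only rational functions of the entries of $g$.

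First I would expand
\[
R_{abc}^{\phantom{abc}d} = \partial_a \Gamma_{bc}^d - \partial_b \Gamma_{ac}^d + \Gamma_{bc}^m \Gamma_{am}^d - \Gamma_{ac}^m \Gamma_{bm}^d,
\]
with $\Gamma_{bc}^d = \tfrac{1}{2}g^{dr}(\partial_b g_{cr} + \partial_c g_{br} - \partial_r g_{bc})$. All second-derivative contributions sit inside $\partial_a \Gamma_{bc}^d$ and $\partial_b \Gamma_{ac}^d$, and each Christoffel symbol is linear in $Du$ with coefficients which, by Cramer's rule applied to $g^{-1}$, are rational in the components of $u$ with denominator $\mathrm{det}(g_{jk})$. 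Hence each $\partial_\bullet \Gamma$ takes the form $\partial_l(F^{lm}(u) D_m u)$ for row vectors $F^{lm}$ of rational functions of $u$, while the $\Gamma \cdot \Gamma$ products are automatically quadratic in $Du$ with rational coefficients. Lowering the last index via $R_{abcd} = g_{de} R_{abc}^{\phantom{abc}e}$ produces an extra $g_{de}$ factor which I commute inside the outer derivative at the price of a Leibniz correction $(\partial_l g_{de})\, F^{lm}(u) D_m u$; this correction is again a rational-coefficient quadratic in $Du$ and is absorbed into the eventual $B$ term.

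The Weyl tensor combines $R_{abcd}$ with the Schouten tensor $P_{ab}$ and metric factors; the Schouten tensor is a linear combination of Ricci ($R_{bc} = g^{ad} R_{abcd}$) and scalar ($R = g^{rs} R_{rs}$) curvatures, and each such contraction only multiplies the coefficients already obtained by further rational functions of $u$. Indexing the resulting equations $W_{abcd}(g)=0$ over the $M = n^4$ tuples $(a,b,c,d)$ and packaging the $N = n(n+1)/2$ independent entries of $g$ into $u$, I obtain
\[
D_l(A^{lm}(u) D_m u) = B(u, Du), \qquad B_a(u, Du) = B_a^{lm}(u) D_l u \cdot D_m u,
\]
with $A^{lm}$ an $M \times N$ matrix and each $B_a^{lm}$ an $N \times N$ matrix whose entries are rational in $u$. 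Positive definiteness of $g$ makes the common denominator $\mathrm{det}(g_{jk})$ nonvanishing, so $A^{lm}$ and $B_a^{lm}$ are smooth on the relevant open set of vectors.

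Finally I would check that the reformulation holds in the distributional sense. As recalled in the paragraph preceding the lemma, $W^{1,2}\cap L^{\infty}$ is an algebra, and consequently any rational function of $u$ whose denominator is bounded away from zero remains in $W^{1,2}\cap L^{\infty}$. Thus $A^{lm}(u) D_m u \in L^2$ and $B_a^{lm}(u) D_l u \cdot D_m u \in L^1$, so both sides of the claimed equation are genuine distributions, and the identity $W_{abcd}(g)=0$ (already interpreted distributionally above) rewrites exactly as the stated divergence-form system. The main obstacle is purely clerical: one has to keep every second-derivative term inside a divergence and track the Leibniz corrections so that they all feed into $B$. No additional analytic input is needed beyond the algebra property of $W^{1,2}\cap L^{\infty}$ already exploited in the setup.
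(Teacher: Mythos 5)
Your proposal is correct and follows essentially the same route as the paper: rewrite the schematic expression \eqref{weyl_schematic} via the Leibniz rule so that all second-derivative terms sit inside a divergence, absorb the Leibniz corrections and the $\Gamma\cdot\Gamma$ terms into the quadratic right-hand side $B(u,Du)$, and obtain rationality of the coefficients from Cramer's rule for $(g^{jk})$. The paper's proof is just a terser version of the same argument, so no further comment is needed.
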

\begin{proof}
Follows from \eqref{weyl_schematic} after using the Leibniz rule: 
\begin{align*}
W_{abcd} &= O ( \partial(g(1+g^{-1} g) g^{-1} \partial g) - \partial(g(1+g^{-1} g)) g^{-1} \partial g \\
 &\qquad \quad + g(1+g^{-1} g)(g^{-1} \partial g)^2 ).
\end{align*}
The fact that $A^{lm}$ and $B_a^{lm}$ are rational functions follows by using Cramer's rule for the inverse matrix $(g^{jk})$.
\end{proof}

The main point, as observed in \cite{LS_Bach}, is that the equation $W_{abcd}(g) = 0$ becomes elliptic if it is written in $n$-harmonic coordinates and the determinant is normalized to one. This is expressed by the next result.

\begin{lemma} \label{lemma_weyl_elliptic}
Let $g_{jk} \in W^{1,q} \cap C^{\alpha}$ in some local coordinates, where $q \geq 2$ and $0 < \alpha < 1$. If $g_{ab}$ is the representation of $g$ in any $n$-harmonic coordinate system, and if 
\[
\hat{g}_{ab} = \abs{g}^{-1/n} g_{ab},
\]
then $\hat{g}_{ab} \in W^{1,q} \cap C^{\alpha}$. If $\hat{u}$ is the column vector consisting of the matrix elements $\hat{g}_{ab}$, and if $\hat{A}^{lm}(x) = A^{lm}(\hat{u}(x))$, then the linear operator 
\[
v \mapsto D_l( \hat{A}^{lm} D_m v)
\]
is overdetermined elliptic (i.e.\ its principal symbol is injective).
\end{lemma}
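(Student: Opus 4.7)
The plan splits into a regularity check for $\hat g_{ab}$ and a pointwise linear-algebra check for the principal symbol; the latter is the heart of the lemma.

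First, the regularity of $\hat g_{ab}$. Both $W^{1,q} \cap L^{\infty}$ (for $q \ge 2$) and $C^{\alpha}$ are algebras under pointwise multiplication, so $W^{1,q} \cap C^{\alpha}$ is an algebra. The determinant $|g|$ is a polynomial in the entries $g_{jk}$, hence lies in $W^{1,q} \cap C^{\alpha}$. Since $g$ is positive definite and continuous, $|g|$ is locally bounded away from zero, so the smooth function $t \mapsto t^{-1/n}$ on $[c,\infty)$ for small $c>0$ yields $|g|^{-1/n} \in W^{1,q} \cap C^{\alpha}$ via the chain rule $\partial_k |g|^{-1/n} = -\tfrac{1}{n} |g|^{-1/n-1} \partial_k |g|$ together with the standard Lipschitz-composition argument for the H\"older part. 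Multiplying by $g_{ab}$ and using the algebra property gives $\hat g_{ab} \in W^{1,q} \cap C^{\alpha}$.

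Second, the ellipticity. By construction in Lemma \ref{lemma_weyl_divergenceform}, $A^{lm}(u)$ collects the coefficients of the second derivatives of $u$ after rewriting $W(g)=0$ in divergence form; the reshuffling used in that lemma only applies the Leibniz rule to algebraic factors of $g$ and $g^{-1}$ and hence does not alter the principal (second-order) part. Consequently the principal symbol $\xi \mapsto \hat A^{lm}(x) \xi_l \xi_m$ of the linear operator $v \mapsto D_l(\hat A^{lm} D_m v)$ coincides, up to sign, with the principal symbol at $(x,\xi)$ of the linearization $h \mapsto W'(\hat g)[h]$ of the Weyl map at the background metric $\hat g$. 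Thus I am reduced to the pointwise claim: for each $x$ and each $\xi \ne 0$, the linear map from symmetric $(0,2)$-tensors $h$ to Weyl-symmetry $(0,4)$-tensors given by $h \mapsto \sigma(W'(\hat g))(\xi) h$ is injective.

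To verify this I would invoke the computation of \cite{LS_Bach}. Two structural facts on $\hat g$ are available: the coordinates are $n$-harmonic for $g$, and hence, by the conformal invariance of the $n$-harmonic gauge exploited in \cite{LS_MRL}, also for $\hat g$; and the conformal normalization gives $|\hat g| = 1$. Under precisely these two gauge conditions, the injectivity of the Weyl principal symbol is what is established in \cite{LS_Bach}, which concludes the proof. The main obstacle to worry about is whether that computation tacitly uses more regularity on $\hat g$ than we have; however, since injectivity of the principal symbol is a pointwise linear-algebra assertion depending only on $\hat g(x)$ and $\xi$, it transfers verbatim to the present $W^{1,q} \cap C^{\alpha}$ setting.
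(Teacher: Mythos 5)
Your treatment of the ellipticity claim follows the paper's route: reduce to the pointwise injectivity of the principal symbol of the linearized Weyl operator at $\hat g$, observe that the divergence-form rewriting in Lemma \ref{lemma_weyl_divergenceform} does not change the second-order part, note that the coordinates are $n$-harmonic for $\hat g$ by conformal invariance of the gauge and that $\abs{\hat g}=1$, and then invoke \cite{LS_Bach}. The paper spells out what you black-box: it records the symbol versions of the two Bianchi-type identities $\xi^d\sigma_\xi(M_{abcd})=0$, $\xi^a\sigma_\xi(N_{abcd})=0$, decomposes $W$ in terms of $M$ and $N$, contracts $\sigma_\xi(W_{abcd})h=0$ with $\xi^a\xi^d$ to land on the symbol of the Bach tensor, and only then applies the gauge condition $\Gamma^k(g)=\frac{n-2}{2}g^{ka}\partial_a\log g^{kk}$ and \cite[Lemma 2.3]{LS_Bach}. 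Your observation that symbol injectivity is a pointwise algebraic statement, so the low regularity of $\hat g$ is harmless, is exactly the justification the paper gives for transferring the smooth-case identities; this part of your argument is sound.

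There is, however, a gap in the regularity part. You compute with $\abs{g}^{-1/n}$ and the algebra property as if $g_{ab}\in W^{1,q}\cap C^{\alpha}$ in the $n$-harmonic coordinates were already known, but that is itself part of the assertion to be proved: the hypothesis only gives $g_{jk}\in W^{1,q}\cap C^{\alpha}$ in the \emph{original} coordinates. To transfer this regularity you must first show that the $n$-harmonic coordinate change (and its inverse) is $W^{2,q}\cap C^{1+\alpha}$ — this is where Theorem \ref{mainthm_pharmonic_coordinates} and, crucially, Proposition \ref{Aharmonic sobolev regularity} (the $W^{2,q}$ regularity of $\mA$-harmonic functions under the Sobolev-type hypothesis \eqref{mA_assumption sobo}) enter — and then apply the transformation rule $\tilde g = D(V^{-1})^T\, g|_{V^{-1}}\, D(V^{-1})$ together with the algebra property of $W^{1,q}\cap C^{\alpha}$, as in Corollary \ref{W1q}. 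Once $g_{ab}\in W^{1,q}\cap C^{\alpha}$ in the new coordinates is in hand, your argument for $\abs{g}^{-1/n}g_{ab}$ (polynomial determinant, lower bound by positive definiteness, composition with a smooth function) is fine and matches the paper's one-line appeal to the algebra structure.
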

\begin{proof}
Since $g_{jk} \in W^{1,q} \cap C^{\alpha}$, by Theorem \ref{mainthm_pharmonic_coordinates} and Proposition \ref{Aharmonic sobolev regularity} we know that any $n$-harmonic coordinate system is $W^{2,q} \cap C^{1+\alpha}$. Now $W^{1,q} \cap C^{\alpha}$ is an algebra under pointwise multiplication, which shows that $g_{ab}$ and $\hat{g}_{ab}$ are in $W^{1,q} \cap C^{\alpha}$.

To prove ellipticity, we need to show that the matrix valued symbol $\hat{A}^{lm} \xi_l \xi_m$ is injective for any $\xi \in \mR^n \setminus \{0\}$. In order to simplify notation we will omit all hats in the rest of this proof and write $g_{ab}$ instead of $\hat{g}_{ab}$ etc. Assuming for the moment that $g$ is $C^{\infty}$, we note the following consequences of Bianchi identities, 
\[
\nabla^d M_{abcd} = 0, \qquad \nabla^a N_{abcd} = 0
\]
where 
\[
M_{abcd} = R_{abcd}+R_{ac}g_{bd}-R_{bc}g_{ad}, \qquad N_{abcd} = R_{ab}g_{cd}-\frac{1}{2}Rg_{ab}g_{cd}.
\]
The left hand sides of these Bianchi type identities may be viewed as third order nonlinear operators acting on $g$. After linearization we obtain the following symbol versions of the previous identities, 
\begin{equation} \label{bianchi_principalsymbol_identities}
\xi^d \sigma_\xi( M_{abcd}) = 0, \qquad \xi^a \sigma_\xi(N_{abcd}) = 0.
\end{equation}
Here $\sigma_\xi(P) = \sigma_{\xi}(P)(g)h$ is the principal symbol of the linearization of an operator $P$, linearized at $g = g_{ab}$ and acting on some $h = h_{ab}$. Now \eqref{bianchi_principalsymbol_identities} are algebraic equations for the principal symbols $\sigma_\xi( M_{abcd})$ and $\sigma_\xi(N_{abcd})$ when $g \in C^{\infty}$, but the since these principal symbols can also be computed for $W^{1,q} \cap C^{\alpha}$ metrics and their algebraic expressions remain identical, it follows that \eqref{bianchi_principalsymbol_identities} is valid for $g_{ab} \in W^{1,q} \cap C^{\alpha}$.
 
By \eqref{weyl_schematic} we have 
\begin{multline*}
 W_{abcd} = M_{abcd} - \frac{n-3}{n-2} N_{acbd} +\frac{1}{n-2}(N_{bdac}-N_{adbc}) \\
  +\frac{n-3}{n-2}R_{bc}g_{ad}-\frac{n-3}{2(n-2)}Rg_{ac}g_{bd} + \frac{1}{2(n-2)}(Rg_{bd}g_{ac}-Rg_{ad}g_{bc}) \\
  -\frac{R}{(n-1)(n-2)}(g_{ac}g_{bd}-g_{bc}g_{ad}).
\end{multline*}
Let us now assume that $\sigma_\xi(W_{abcd})h=0$, where $h=(h_{ij})$. Contracting this equation by $\xi^a \xi^d$ and using \eqref{bianchi_principalsymbol_identities} yields
\begin{multline*}
0=\frac{2(n-2)}{3-n}\xi^a\xi^d\sigma_\xi(W_{abcd})h =-2|\xi|^2\sigma_\xi(R_{bc})h+\frac{n-2}{n-1}\xi_b\xi_c\sigma_\xi(R)h \\
+\frac{1}{n-1}|\xi|^2 g_{bc} \sigma_\xi(R)h. \\
\end{multline*}
The symbol on the right hand side is now (a scalar multiple of) the symbol of the Bach tensor, see \cite[Lemma 3.2]{LS_Bach}. The $W^{1,q} \cap C^{\alpha}$ regularity is also enough for the $n$-harmonic coordinate gauge condition in \cite{LS_Bach} for the contracted Christoffel symbols (with $g$ having determinant one) to be written as
 \[
 \Gamma^k(g)=\frac{n-2}{2}g^{ka}\p_a \mbox{log}\,g^{kk}.
 \]
The required ellipticity follows now from \cite[Lemma 2.3]{LS_Bach}.
\end{proof}

\begin{proof}[Proof of Theorem~\ref{mainthm_weyl_flatness}]
The assumption that $g \in W^{1,n} \cap C^{\alpha}$ in some local coordinates together with Theorem \ref{mainthm_pharmonic_coordinates} implies that there exists an $n$-harmonic coordinate system with $C^{1+\alpha}$ regularity. By Lemma \ref{lemma_weyl_elliptic} we know that in any $n$-harmonic coordinates, $g_{ab}$ and $\hat{g}_{ab} = \abs{g}^{-1/n} g_{ab}$ are in $W^{1,n} \cap C^{\alpha}$. We will prove that 
\begin{equation} \label{gabhat_smooth}
\hat{g}_{ab} \in C^{\infty}.
\end{equation}
Since the Weyl tensor of $\hat{g}_{ab}$ vanishes, we get that $\hat{g}_{ab} = c_0 \delta_{ab}$ where $c_0 \in C^{\infty}$. Thus $g_{ab} = \abs{g}^{1/n} c_0 \delta_{ab}$ where $c = \abs{g}^{1/n} c_0$ is a positive function in $W^{1,n} \cap C^{\alpha}$ as required.

It remains to prove \eqref{gabhat_smooth}. In the rest of the proof, we will omit hats and write $g_{ab}$ and $u$ instead of $\hat{g}_{ab}$ and $\hat{u}$ etc. Since the Weyl tensor of $g$ vanishes, Lemmas \ref{lemma_weyl_divergenceform}--\ref{lemma_weyl_elliptic} imply that $u$ solves the overdetermined elliptic system 
\[
D_l(A^{lm}(u) D_m u) = B(u, Du)
\]
where $u \in W^{1,n} \cap C^{\alpha}$ is the column vector consisting of the matrix elements of $g_{ab}$. We claim that for any $t \geq n$, one has 
\begin{equation} \label{wont_claim}
\left\{ \begin{array}{c} D_l(A^{lm}(u) D_m u) = B(u, Du) \\[2pt] u \in W^{1,t} \cap C^{\alpha} \end{array} \right. \implies u \in W^{1,t+\delta} \cap C^{\alpha}
\end{equation}
where $\delta = \frac{\alpha}{1-\alpha} \frac{n}{4}$. If \eqref{wont_claim} is known, then starting from $u \in W^{1,n} \cap C^{\alpha}$ and iterating \eqref{wont_claim} gives $u \in W^{1,q}$ for any $q < \infty$. Then by Sobolev embedding $u \in C^{\beta}$ for any $\beta < 1$, and $u$ solves 
\[
D_l(A^{lm}(u) D_m u) = B(u, Du)
\]
where $A^{lm}(u) \in C^{\beta}$ and $B(u,Du) \in L^q$ for all $\beta < 1$ and $q < \infty$. The Schauder estimates in Proposition \ref{prop_conealpha_regularity}(c) give that $u \in C^{1+\beta}$ for all $\beta < 1$, and iterating the higher order Schauder estimates in Proposition \ref{prop_conealpha_regularity}(d) gives that $u \in C^{\infty}$. This shows \eqref{gabhat_smooth}.

To prove \eqref{wont_claim}, note that if $u \in W^{1,t} \cap C^{\alpha}$ and $t \geq n$, then $x \mapsto A^{lm}(u(x))$ and $x \mapsto B_a^{lm}(u(x))$ are in $W^{1,t} \cap C^{\alpha}$ (this set is an algebra under pointwise multiplication). Thus, if we write $f(x) = B(u(x), Du(x))$, we see that $u$ solves 
\[
D_l(A^{lm}(u) D_m u) = f
\]
with $A^{lm}(u) \in W^{1,t} \cap C^{\alpha}$ and $f \in L^{t/2}$. The Calder\'on-Zygmund estimate in Proposition \ref{prop_conealpha_regularity}(b) implies that $u \in W^{2,t/2}$. But we also know that $u \in C^{\alpha}$, and $C^{\alpha} \subset W^{\eps,q}$ whenever $\eps < \alpha$ and $q < \infty$. Under these conditions we have 
\[
u \in W^{2,t/2} \cap W^{\eps,q},
\]
and by complex interpolation $u \in [W^{2,t/2}, W^{\eps,q}]_{\theta}$ for $0 < \theta < 1$. We choose $\theta$ so that 
\[
(1-\theta) \cdot 2 + \theta \cdot \eps = 1,
\]
which gives $\theta = \frac{1}{2-\eps}$. Thus $u \in W^{1,p}$ where 
\[
\frac{1}{p} = (1-\theta) \frac{2}{t} + \theta \frac{1}{q} = \frac{1-\eps}{2-\eps} \frac{2}{t} + \theta \frac{1}{q}.
\]
Since $q < \infty$ was arbitrary, we get $u \in W^{1,p}$ whenever $p < \frac{2-\eps}{1-\eps} \frac{t}{2}$ and $\eps < \alpha$. In particular this gives \eqref{wont_claim}.
\end{proof}

\appendix

\section{Regularity for overdetermined elliptic systems} \label{sec_appendix_regularity}

Let $\Omega \subset \mR^n$, $n \geq 2$, be a bounded open set. Let us consider an $M \times N$ linear system of PDEs $P(x,D) u = f$ in $\Omega$, where $D = -i\nabla$ and 
\begin{equation} \label{p_condition_1}
P(x,D) u = D_l(A^{lm} D_m u)
\end{equation}
and the coefficients satisfy 
\begin{equation} \label{p_condition_2}
A^{lm} \in C^0_{\mathrm{loc}}(\Omega, \mR^{M \times N}).
\end{equation}
Let $p(x,\xi) = A^{lm}(x) \xi_l \xi_m$ be the principal symbol of $P$. We assume that $P$ is overdetermined elliptic in the sense that for any $x \in \Omega$ and $\xi \in \mR^n \setminus \{0\}$, the matrix $p(x,\xi)$ is injective:
\begin{equation} \label{p_condition_3}
p(x,\xi) \zeta = 0, \ \  \zeta \in \mR^N \implies \zeta = 0.
\end{equation}
If this holds, then necessarily $M \geq N$.

We will prove the following Calder\'on-Zygmund and Schauder regularity results for overdetermined elliptic systems in divergence form.

\begin{prop} \label{prop_conealpha_regularity}
Let $u \in W^{1,q_0}_{\mathrm{loc}}(\Omega, \mR^N)$ solve $P(x,D)u = f$ in the sense of distributions in $\Omega$, where $P$ satisfies \eqref{p_condition_1}--\eqref{p_condition_3} and $q_0 > 1$.
\begin{enumerate}
\item[(a)]
If $1 < q < \infty$ and 
\[
A^{lm} \in C^0_{\mathrm{loc}}, \qquad f \in W^{-1,q}_{\mathrm{loc}},
\]
then $u \in W^{1,q}_{\mathrm{loc}}$.
\item[(b)]
If $1 < q < \infty$ and 
\[
A^{lm} \in C^0_{\mathrm{loc}} \cap W^{1,t}_{\mathrm{loc}}, \qquad f \in L^{q}_{\mathrm{loc}},
\]
where 
\[
\left\{ \begin{array}{cl} t = \max\{n,q\}, & q \neq n, \\ t > n, & q = n, \end{array} \right. 
\]
then $u \in W^{2,q}_{\mathrm{loc}}$.
\item[(c)] 
If $0 < \alpha < 1$ and 
\[
A^{lm} \in C^{\alpha}_{\mathrm{loc}}, \qquad f \in C^{-1+\alpha}_{\mathrm{loc}},
\]
then $u \in C^{1+\alpha}_{\mathrm{loc}}$.
\item[(d)] 
If $k \geq 0$ is an integer, $0 < \alpha < 1$, and 
\[
A^{lm} \in C^{k+\alpha}_{\mathrm{loc}}, \qquad f \in C^{k-1+\alpha}_{\mathrm{loc}},
\]
then $u \in C^{k+1+\alpha}_{\mathrm{loc}}$.
\end{enumerate}
\end{prop}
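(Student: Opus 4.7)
The plan is to establish all four parts by freezing coefficients, reducing each statement to a constant-coefficient overdetermined elliptic system in $\mR^n$, and handling the constant-coefficient case by constructing a Fourier multiplier left parametrix whose mapping properties on $L^q$, Sobolev, and Zygmund spaces follow from classical Mikhlin--H\"ormander multiplier theorems.

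First I would treat the constant-coefficient operator $P_0(D) = A_0^{lm} D_l D_m$ with injective principal symbol $p_0(\xi) = A_0^{lm} \xi_l \xi_m$. Since $p_0(\xi)^* p_0(\xi)$ is a positive-definite $N\times N$ matrix homogeneous of degree $4$ in $\xi \neq 0$, the expression
\[
q(\xi) := (p_0(\xi)^* p_0(\xi))^{-1} p_0(\xi)^*
\]
is a smooth $N \times M$ matrix-valued symbol, homogeneous of degree $-2$, satisfying $q(\xi) p_0(\xi) = I_N$. Cutting off near the origin gives a left parametrix $Q$ for $P_0(D)$ with $Q P_0 = I - S$ for a smoothing operator $S$. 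Symbols of the form $\xi_{j_1}\cdots \xi_{j_i} q(\xi) \xi_{l_1}\cdots \xi_{l_k}$ with $i + k \leq 2$ are homogeneous of nonpositive degree and satisfy the Mikhlin condition, which yields the constant-coefficient a priori estimates
\[
\|u\|_{W^{1,q}} \leq C\|P_0 u\|_{W^{-1,q}}, \qquad \|u\|_{W^{2,q}} \leq C\|P_0 u\|_{L^q},
\]
as well as the Schauder version $\|u\|_{C^{k+1+\alpha}_*} \leq C(\|P_0 u\|_{C^{k-1+\alpha}_*} + \|u\|_{L^\infty})$ for compactly supported $u$, the last following from the Littlewood--Paley characterization of Zygmund spaces.

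Next, to handle variable coefficients I would freeze at a point. Fix $x_0 \in \Omega$, choose a small radius $r > 0$, and pick a cutoff $\chi \in C_c^\infty(B_r(x_0))$ with $\chi = 1$ on $B_{r/2}(x_0)$. Setting $P_0 = P(x_0,D)$, a direct computation gives
\[
P_0(\chi u) = \chi f + [P(x,D),\chi]u + D_l\bigl((A^{lm}(x_0) - A^{lm}) D_m(\chi u)\bigr).
\]
The first term lies in the source space by assumption, the commutator is first-order in $u$ and thus controlled by the a priori $W^{1,q_0}$ regularity together with Sobolev embedding, and by continuity of $A^{lm}$ the coefficients of the last term have $L^\infty$-norm bounded by a modulus of continuity $\omega(r) \to 0$. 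Combined with the constant-coefficient estimate, choosing $r$ sufficiently small lets one absorb the last term into the left-hand side; a standard mollification argument handles the fact that $\chi u$ is only in $W^{1,q_0}$ a priori. This yields the local estimate underlying part (a), and an identical construction with H\"older/Zygmund norms replacing Sobolev norms gives part (c).

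Part (b) is obtained by differentiating the equation: applying $D_i$ gives
\[
D_l(A^{lm} D_m (D_i u)) = D_i f - D_l((D_i A^{lm}) D_m u),
\]
a system of the same form for $D_i u$, and the hypothesis on $t$ together with Sobolev embedding ensures that $W^{1,t}\cap C^0$ coefficients multiply $W^{1,q}$ functions into $W^{1,q}$, so the right-hand side lies in $W^{-1,q}$. A bootstrap starting from the $W^{1,q_0}$ assumption then yields $D_i u \in W^{1,q}$ by part (a). Part (d) follows from part (c) by induction on $k$, differentiating the equation $k$ times. The main technical obstacle throughout is the overdetermined feature: one must verify that the symbol $q(\xi)$ and its derivatives admit bounds uniform on the unit sphere depending only on the ellipticity modulus of $P$; this reduces by homogeneity and compactness to a positive lower bound for the smallest singular value of $p_0(\xi)$ on $|\xi|=1$, which is precisely the overdetermined ellipticity condition \eqref{p_condition_3}.
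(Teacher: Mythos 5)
Your overall architecture --- the left parametrix $e(\xi) = (1-\psi(\xi))\,(p^*p)^{-1}p^*$ with Mikhlin/Littlewood--Paley mapping properties, freezing at $x_0$, cutting off, and absorbing the error term $D_l\bigl((A^{lm}(x_0)-A^{lm})D_m(\chi u)\bigr)$ by shrinking $r$ --- is exactly the paper's proof of (a), (c) and the induction step of (d). Two caveats on (a): ``absorbing into the left-hand side'' presupposes that $\norm{\chi u}_{W^{1,q}}$ is already finite, and the clean fix (which the paper uses) is to observe that $T = I + E_{x_0}Q_{x_0}$ is invertible by Neumann series \emph{simultaneously} on $W^{1,p}$ for all $p$ in an interval containing both $q_0$ and $q$, so the unique fixed point $v = T^{-1}G$, a priori in $W^{1,q_0}$, automatically lies in the better space once $G$ does; your ``standard mollification argument'' should be replaced by this. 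Also one pass is not enough: the commutator terms only give $G \in W^{1,\min\{q_0^*,q\}}$, so one must bootstrap $q_0 \to q_0^* \to q_0^{**} \to \cdots$ finitely many times.

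The genuine gap is in (b). You differentiate the equation and apply part (a) to $D_i u$, but part (a) requires its solution to lie a priori in $W^{1,q_0}_{\mathrm{loc}}$ for some $q_0>1$; for $v = D_i u$ this means $u \in W^{2,q_0}_{\mathrm{loc}}$, which is precisely the conclusion of (b) and is not available at that stage (after applying (a) to $u$, one only knows $D_i u \in L^q_{\mathrm{loc}}$). Moreover the differentiated identity $D_l(A^{lm}D_m(D_i u)) = D_i f - D_l((D_iA^{lm})D_m u)$ is only formal until $u$ is known to have two derivatives. As written the argument is circular. One must either run a difference-quotient version with bounds uniform in $h$ (which requires the quantitative estimate behind (a), not just the qualitative statement), or --- as the paper does --- prove directly that $T$ is invertible on $W^{2,q}$, using that multiplication by a $C^0 \cap W^{1,t}$ function (with $t$ as in the hypothesis) is bounded on $W^{1,q}$, and that $G = E_{x_0}F + \psi(D)\chi_2 v \in W^{2,q}$; then $v = T^{-1}G \in W^{2,q}$ with no differentiation of the equation. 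The same issue touches (d): there one may legitimately differentiate, but only after first invoking (b) to secure $u \in W^{2,q}_{\mathrm{loc}}$, a step you should make explicit.
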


\begin{remark}
As examples of functions in $W^{-1,q}_{\mathrm{loc}}$ and $C^{-1+\alpha}_{\mathrm{loc}}$, we mention that if $1 < q < \infty$, one has 
\[
f \in L^s_{\mathrm{loc}} \text{ where } \left\{ \begin{array}{cl} s=1, & q < \frac{n}{n-1}, \\ s>1, & q = \frac{n}{n-1}, \\ s=\frac{nq}{n+q},& q > \frac{n}{n-1}, \end{array} \right. \ \  f^j \in L^q_{\mathrm{loc}} \implies f + D_j f^j \in W^{-1,q}_{\mathrm{loc}},
\]
and if $0 < \alpha < 1$, one has 
\[
f \in L^{\frac{n}{1-\alpha}}_{\mathrm{loc}}, \quad f^j \in C^{\alpha}_{\mathrm{loc}} \implies f + D_j f^j \in C^{-1+\alpha}_{\mathrm{loc}}.
\]
\end{remark}

\begin{remark}
It is well known that for any $q > 2$ there is a uniformly elliptic scalar equation $D_j(a^{jk} D_k u) = 0$ where $a^{jk}$ are $L^{\infty}$, such that there is a nontrivial $W^{1,2}$ solution that is not in $W^{1,q}$ \cite{Meyers}. Thus (a) above does not hold if $A^{lm} \in L^{\infty}$, but (a) may remain true if one assumes $A^{lm} \in L^{\infty} \cap VMO$ (see \cite{DiFazio} for the scalar case).
\end{remark}

We will need some standard facts about function spaces, see \cite{BerghLofstrom}, \cite{Taylor3} for more details. If $s \in \mR$ and $1 < p < \infty$, then $W^{s,p} = W^{s,p}(\mR^n)$ is the Banach space with norm 
\[
\norm{f}_{W^{s,p}(\mR^n)} = \norm{\br{D}^s f}_{L^p(\mR^n)}.
\]
Here, $\br{\xi} = (1+\abs{\xi}^2)^{1/2}$ and $\br{D}^s$ is the Fourier multiplier corresponding to $\br{\xi}^s$. In general, if $m(\xi)$ is a $C^{\infty}$ function which is polynomially bounded in $\mR^n$ together with its derivatives, the corresponding Fourier multiplier $m(D)$ is the map on $\mS'(\mR^n)$ (the space of tempered distributions) defined by 
\[
m(D)f = \mathscr{F}^{-1}\{ m(\xi) \hat{f}(\xi) \}.
\]
If $k \in \{ 0, 1, 2, \ldots\}$ and $0 < \alpha < 1$, then $C^{k+\alpha} = C^{k+\alpha}(\mR^n)$ is the Banach space with norm 
\[
\norm{f}_{C^{k+\alpha}(\mR^n)} = \sum_{\abs{\gamma} \leq k} \norm{D^{\gamma} f}_{L^{\infty}(\mR^n)} + \sum_{\abs{\gamma} = k} \sup_{x \neq y} \frac{\abs{D^{\gamma} f(x)-D^{\gamma} f(y)}}{\abs{x-y}^{\alpha}}.
\]
One has $C^{k+\alpha} = C^{k+\alpha}_{*} = B^{k+\alpha}_{\infty \infty}$ where $C^s_*$ are the Zygmund spaces and $B^s_{pq}$ the Besov spaces in $\mR^n$. We write $C^{-1+\alpha} = C^{-1+\alpha}_*$ for $0 < \alpha < 1$. The operator $\br{D}^t$ is an isomorphism $W^{s,p} \to W^{s-t,p}$ and $C^s_{*} \to C^{s-t}_{*}$, and $D_l$ is a bounded map $W^{s,p} \to W^{s-1,p}$ and $C^s_* \to C^{s-1}_*$ for any $s, t \in \mR$ and $1 < p < \infty$ \cite[Section 13]{Taylor3}. We will also use the Sobolev embeddings 
\begin{equation*}
W^{1,p} \subset \left\{ \begin{array}{cc} L^{p^*}, & 1 \leq p < n, \\ \bigcap_{q < \infty} L^q_{\mathrm{loc}}, & p = n, \\ C^{1-n/p}, & n < p < \infty, \end{array} \right.
\end{equation*}
where $p^* = \frac{np}{n-p}$ for $1 \leq p < n$.

The first step in the proof of Proposition \ref{prop_conealpha_regularity} is a parametrix construction for solutions of $P_{x_0} v = f$ in $\mR^n$, where $P_{x_0}$ the frozen coefficient operator 
\[
P_{x_0} = A^{lm}(x_0) D_{lm}, \qquad x_0 \in \Omega.
\]
We fix a cutoff function $\psi \in C^{\infty}_c(\mR^n)$ with $\psi = 1$ for $\abs{\xi} \leq 1/2$ and $\psi = 0$ for $\abs{\xi} \geq 1$.

\begin{lemma} \label{lemma_v_representation_formula}
Fix $x_0 \in \Omega$. There is an operator $E_{x_0} = e_{x_0}(D)$ such that for any $v \in \mS'(\mR^n, \mR^N)$ one has 
\begin{equation} \label{v_representation_formula}
v = E_{x_0} P_{x_0} v + \psi(D) v
\end{equation}
where $e_{x_0}: \mR^n \to \mR^{N \times M}$ is the bounded smooth matrix valued function 
\[
e_{x_0}(\xi) = (1-\psi(\xi)) (p(x_0,\xi)^* p(x_0,\xi))^{-1} p(x_0,\xi)^*.
\]
The operator $E_{x_0}$ is a bounded map $W^{s,p}(\mR^n, \mR^M) \to W^{s+2,p}(\mR^n,\mR^N)$ and $C^s_{*}(\mR^n, \mR^M) \to C^{s+2}_{*}(\mR^n,\mR^N)$ whenever $s \in \mR$ and $1 < p < \infty$.
\end{lemma}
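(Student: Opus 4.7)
The plan is to observe that \eqref{v_representation_formula} is a purely algebraic identity at the level of Fourier symbols, and that $E_{x_0}$ is a constant-coefficient (matrix-valued) pseudodifferential operator of order $-2$, so its mapping properties reduce to standard multiplier theorems. First I would verify the representation formula directly on the Fourier side. One has $\widehat{P_{x_0} v}(\xi) = p(x_0,\xi)\hat{v}(\xi)$, where $p(x_0,\xi) = A^{lm}(x_0)\xi_l\xi_m$ is an $M \times N$ matrix. By the overdetermined ellipticity hypothesis \eqref{p_condition_3}, $p(x_0,\xi)$ has trivial kernel for every $\xi \neq 0$, so the symmetric $N \times N$ matrix $p(x_0,\xi)^{*}p(x_0,\xi)$ is positive definite and hence invertible off the origin. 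Consequently $e_{x_0}(\xi)\,p(x_0,\xi) = (1-\psi(\xi))I_N$ wherever the right-hand side is nonzero, and adding $\psi(\xi)\hat{v}(\xi)$ reconstructs $\hat v(\xi)$. The role of the cutoff $(1-\psi)$ is solely to regularize the singularity of $(p^{*}p)^{-1}$ at $\xi = 0$; from $\psi = 1$ near the origin it follows that $e_{x_0} \in C^\infty(\mR^n,\mR^{N\times M})$.

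The second step is to establish classical symbol estimates of order $-2$ for $e_{x_0}$. Since $p(x_0,\xi)$ is homogeneous of degree two in $\xi$, the combination $(p^{*}p)^{-1} p^{*}$ is homogeneous of degree $-2$ for $\xi \neq 0$, and by compactness of the unit sphere together with injectivity of $p(x_0,\cdot)$ thereon there exists $c > 0$ with $p(x_0,\xi)^{*}p(x_0,\xi) \geq c|\xi|^4 I_N$ for $|\xi| \geq 1$. A routine induction using homogeneity of partial derivatives then yields
\[
|\partial_\xi^\gamma e_{x_0}(\xi)| \leq C_\gamma \br{\xi}^{-2-|\gamma|} \qquad \text{for every multi-index } \gamma,
\]
with constants $C_\gamma$ depending on $A^{lm}(x_0)$ and an upper bound for the ellipticity constant.

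Finally, I would derive the boundedness on Sobolev and Zygmund scales by a multiplier argument. Write $E_{x_0} = \br{D}^{-2} m(D)$ with $m(\xi) := \br{\xi}^2 e_{x_0}(\xi)$, which by the previous step satisfies the Mikhlin--H\"ormander condition $|\partial_\xi^\gamma m(\xi)| \leq C_\gamma \br{\xi}^{-|\gamma|}$ uniformly in $\xi$. Componentwise, the (matrix-valued) Mikhlin multiplier theorem shows that $m(D)$ is bounded on $L^p(\mR^n,\mR^M)$ for $1 < p < \infty$, while the Littlewood--Paley characterization of Besov spaces yields boundedness on $C^s_{*} = B^s_{\infty\infty}$ for every $s \in \mR$; both facts are collected in \cite[Section 13]{Taylor3}. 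Combined with the standard fact that $\br{D}^{-2}$ is an isomorphism $W^{s,p} \to W^{s+2,p}$ and $C^s_{*} \to C^{s+2}_{*}$, this gives the claimed mapping properties for $E_{x_0}$. There is no serious obstacle; the only conceptual point beyond the square case is that overdetermined ellipticity supplies the left inverse $(p^{*}p)^{-1} p^{*}$ in place of a genuine matrix inverse, and once the symbol has the structure above the rest is routine Littlewood--Paley theory.
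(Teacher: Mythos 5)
Your proposal is correct and follows essentially the same route as the paper: the Fourier-side verification of the identity, the homogeneity/ellipticity argument showing $e_{x_0}$ is a symbol of order $-2$, and the multiplier argument for the mapping properties are all as in the paper's proof (which explicitly offers the decomposition $m_{x_0}=\br{\xi}^2 e_{x_0}$ with the Mihlin theorem as an alternative to citing pseudodifferential calculus).
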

\begin{proof}
The operator $P_{x_0}$ has full symbol $p(x_0,\xi) = A^{lm}(x_0) \xi_l \xi_m$. Then 
\[
p(x_0,\xi)^* p(x_0,\xi) = \sum_{j,k,l,m=1}^n A^{jk}(x_0)^* A^{lm}(x_0) \xi_j \xi_k \xi_l \xi_m.
\]
The ellipticity condition \eqref{p_condition_3} implies that 
\[
\det{p(x_0,\xi)^* p(x_0,\xi)} \neq 0, \quad \xi \in \mR^n \setminus \{0\}.
\]
Since $p(x_0,\xi)^* p(x_0,\xi)$ is homogeneous of degree $4$ in $\xi$, the function $e_{x_0}$ defined in the statement of the lemma is smooth in $\xi$ and homogeneous of degree $-2$ for $\abs{\xi}$ large.

If $v \in \mS'(\mR^n, \mR^N)$, the identity 
\[
\hat{v}(\xi) = (1-\psi(\xi)) \hat{v}(\xi) + \psi(\xi) \hat{v}(\xi) = e_{x_0}(\xi) p(x_0,\xi) \hat{v}(\xi) + \psi(\xi) \hat{v}(\xi)
\]
implies $v = E_{x_0} P_{x_0} v + \psi(D) v$ by taking inverse Fourier transforms. Now $e_{x_0}(\xi)$ is a matrix valued classical symbol in the class $S^{-2}_{1,0}$, and the mapping properties of the corresponding pseudodifferential operator \cite[Section 13]{Taylor3} imply the mapping properties for $E_{x_0}$. (Alternatively, one could apply the vector valued Mihlin multiplier theorem \cite[Theorem 6.1.6]{BerghLofstrom} and its version on Besov spaces to $m_{x_0} = \br{\xi}^2 e_{x_0}$.)
\end{proof}

Next we give a local representation formula for solutions of the equation $P(x,D)u = f$. To prove the Schauder estimates it is convenient to do a rescaling: if $x_0 \in \Omega$ and $r > 0$ is small, we write 
\[
\tilde{A}^{lm}(y) = A^{lm}(x_0+ry), \quad \tilde{u}(y) = u(x_0+ry), \quad \tilde{f}(y) = f(x_0+ry).
\]
We also fix a cutoff function $\chi \in C^{\infty}_c(\mR^n)$ with $\chi = 1$ for $\abs{x} \leq 1/2$ and $\mathrm{supp}(\chi) \subset B(0,3/4)$, and we let $\chi_2(y) = \chi(y/2)$.

\begin{lemma} \label{lemma_local_representation_formula}
Let $u \in W^{1,q_0}_{\mathrm{loc}}(\Omega, \mR^N)$ solve $P(x,D)u = f $ in $\Omega$, where $q_0 > 1$. Fix a point $x_0 \in \Omega$, and assume that $B(x_0,2r) \subset \Omega$. Then $v = \chi \tilde{u}$ satisfies the equation 
\[
v + E_{x_0} Q_{x_0} v = E_{x_0} F + \psi(D) \chi_{2} v \qquad \text{in $\mR^n$},
\]
where 
\[
Q_{x_0} w = D_l (\chi_2(\tilde{A}^{lm}-\tilde{A}^{lm}(0)) D_m w)
\]
and 
\begin{align*}
F = r^2 \chi \tilde{f} + \tilde{A}^{lm} D_l \chi D_m \tilde{u} + D_l(\tilde{A}^{lm} \tilde{u} D_m \chi) .
\end{align*}
\end{lemma}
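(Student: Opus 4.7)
The plan is to derive the formula in four direct steps: rescale the equation to $B(0,2)$, insert the cutoff $\chi$ via Leibniz, freeze the top-order coefficients at $0$, and finally apply the parametrix identity of Lemma~\ref{lemma_v_representation_formula}. The only subtlety to track is that all objects extend consistently by zero from $B(0,2)$ to all of $\mR^n$, which will be automatic from the fact that $\supp(v) \subset B(0,3/4)$.

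First I would observe that the change of variables $y = (x - x_0)/r$ satisfies $D_{x_m} = r^{-1} D_{y_m}$, so that $P(x,D)u = f$ translates into
\[
D_l(\tilde{A}^{lm} D_m \tilde{u}) = r^2 \tilde{f} \quad \text{in } B(0,2).
\]
Applying the Leibniz rule to $v = \chi \tilde{u}$ and using this rescaled equation on the first term yields
\[
D_l(\tilde{A}^{lm} D_m v) = \chi \, D_l(\tilde{A}^{lm} D_m \tilde{u}) + \tilde{A}^{lm} (D_l \chi) D_m \tilde{u} + D_l(\tilde{A}^{lm} \tilde{u} \, D_m \chi) = F.
\]
Since $v$ is compactly supported in $B(0,3/4) \subset B(0,2)$, each side extends by zero to a distributional identity on all of $\mR^n$.

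Next I would split $\tilde{A}^{lm} = \tilde{A}^{lm}(0) + (\tilde{A}^{lm} - \tilde{A}^{lm}(0))$. The constant part produces $P_{x_0} v$. For the remainder, since $\supp(D_m v) \subset B(0,3/4)$ and $\chi_2 \equiv 1$ on $B(0,1)$, inserting $\chi_2$ into the product does not change it, so we arrive at
\[
P_{x_0} v + Q_{x_0} v = F \quad \text{on } \mR^n.
\]

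Finally, applying the bounded operator $E_{x_0}$ to this identity and invoking the parametrix identity $E_{x_0} P_{x_0} v = v - \psi(D) v$ from Lemma~\ref{lemma_v_representation_formula} gives
\[
v + E_{x_0} Q_{x_0} v = E_{x_0} F + \psi(D) v.
\]
Because $\chi_2 v = v$ pointwise, $\psi(D) v = \psi(D)(\chi_2 v)$, which is precisely the identity in the statement. No analytic obstacle arises here; bookkeeping of supports so the cutoffs and the distributional products are mutually compatible is the only thing requiring attention.
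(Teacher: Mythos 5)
Your proposal is correct and follows exactly the paper's route: rescale to $B(0,2)$, apply the Leibniz rule to $v=\chi\tilde u$, split $\tilde A^{lm}$ into its value at $0$ plus the remainder (with the harmless insertion of $\chi_2$ justified by $\supp(v)\subset B(0,3/4)$), and then invoke the parametrix identity of Lemma~\ref{lemma_v_representation_formula}. The only difference is that you spell out the support bookkeeping for $\chi_2$ and the extension by zero to $\mR^n$, which the paper leaves implicit.
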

\begin{proof}
Note that $\tilde{u}$ satisfies the equation 
\[
D_l (\tilde{A}^{lm} D_m \tilde{u}) = r^2 \tilde{f} \quad \text{in } B(0,2).
\]
If $v = \chi \tilde{u}$, we compute 
\begin{align*}
D_l (\tilde{A}^{lm} D_m v) &= D_l (\tilde{A}^{lm} D_m (\chi \tilde{u})) = D_l (\tilde{A}^{lm} \chi D_m \tilde{u} + \tilde{A}^{lm} \tilde{u} D_m \chi) \\
 &= \chi(r^2 \tilde{f}) + \tilde{A}^{lm} D_l \chi D_m \tilde{u} + D_l(\tilde{A}^{lm} \tilde{u} D_m \chi).
\end{align*}
Note that the left hand side operator frozen at $0$ is just $P_{x_0}$. Thus 
\[
D_l (\tilde{A}^{lm} D_m v) = P_{x_0} v + Q_{x_0} v
\]
where $Q_{x_0}$ is as stated. It follows that 
\[
P_{x_0} v + Q_{x_0} v = F \quad \text{in $\mR^n$}.
\]
The result follows from Lemma \ref{lemma_v_representation_formula}.
\end{proof}

\begin{proof}[Proof of Proposition \ref{prop_conealpha_regularity}]
Let $u \in W^{1,q_0}_{\mathrm{loc}}(\Omega, \mR^N)$ solve $P(x,D)u = f $ in $\Omega$, where $q_0 > 1$. Fix $x_0 \in \Omega$, and assume that $B(x_0,2r) \subset \Omega$. By Lemma \ref{lemma_local_representation_formula}, the function $v = \chi \tilde{u}$ satisfies 
\[
T(v) = G \quad \text{in $\mR^n$},
\]
where 
\begin{align*}
T(w) &= w + E_{x_0} Q_{x_0} w, \\
G &= E_{x_0} F + \psi(D) \chi_{2} v.
\end{align*}

Assume now the conditions in (a). We may assume $q_0 < n$, so $W^{1,q_0} \subset L^{q_0^*}$ by Sobolev embedding. We may also assume $q_0 < q$ (otherwise there is nothing to prove). Fix $p_0 > 2$ so that $q_0, q \in [p_0', p_0]$. We claim that if $r$ is chosen small enough, then 
\begin{gather}
\smash{\raisebox{-\dimexpr.5\normalbaselineskip+.5\jot}{$%
            \left\{ \begin{array}{@{}c@{}}\\[\jot]\\[\jot]\end{array}\right.$}}
\text{$T$ is invertible on $W^{1,p}(\mR^n, \mR^N)$ for $p \in [p_0', p_0]$, and }\label{tboundedinvertible} \\
\text{$G \in W^{1,p}$ for $p \in [q_0, \min\{q_0^*,q\}]$.} \label{gwoneq}
\end{gather}
Assuming these two claims, it follows that $v = T^{-1}(G) \in W^{1,\min\{q_0^*,q\}}$. If $q \leq q_0^*$ we have $v \in W^{1,q}$ and we are done. Otherwise we get $v \in W^{1,q_0^*}$, and repeating the argument with $q_0$ replaced by $q_0^*$ we see that $G$ and thus also $v$ are in $W^{1,\min\{q_0^{**},q\}}$. Iterating this procedure finitely many times gives $v \in W^{1,q}$, which proves $u \in W^{1,q}_{\mathrm{loc}}$ upon varying $x_0$. To see \eqref{tboundedinvertible}, note that 
\[
\norm{E_{x_0} w}_{W^{1,p}} \leq C \norm{w}_{W^{-1,p}}, \qquad w \in W^{-1,p}, \ \ p \in [p_0',p_0].
\]
This follows by complex interpolation from the cases $p=p_0$ and $p=p_0'$ given by Lemma \ref{lemma_v_representation_formula}. Consequently 
\begin{align*}
\norm{E_{x_0} Q_{x_0} w}_{W^{1,p}} &\leq C \norm{D_l (\chi_2(\tilde{A}^{lm}-\tilde{A}^{lm}(0)) D_m w)}_{W^{-1,p}} \\
 &\leq C \sum_{l,m} \norm{\tilde{A}^{lm}-\tilde{A}^{lm}(0)}_{L^{\infty}(B(0,2))} \norm{w}_{W^{1,p}}.
\end{align*}
Since $A^{lm}$ are continuous, we can choose $r$ small enough so that 
\[
\norm{E_{x_0} Q_{x_0} w}_{W^{1,p}} \leq \frac{1}{2} \norm{w}_{W^{1,p}}, \qquad p \in [p_0',p_0].
\]
This implies \eqref{tboundedinvertible} by Neumann series. To see \eqref{gwoneq}, note that our assumptions and Sobolev embedding imply 
\[
F \in W^{-1,q} + L^{q_0} + W^{-1,q_0^*}.
\]
Since all the terms are compactly supported, we get 
\[
F \in W^{-1,p}, \qquad p \in (1, \min\{q_0^*,q\}].
\]
The mapping properties of $E_{x_0}$ in Lemma \ref{lemma_v_representation_formula} and the fact that $\psi(D) \chi_{2} v$ is Schwartz imply \eqref{gwoneq}.

Assume next the conditions in (b). We claim that for $r > 0$ small, 
\begin{gather}
\smash{\raisebox{-\dimexpr.5\normalbaselineskip+.5\jot}{$%
            \left\{ \begin{array}{@{}c@{}}\\[\jot]\\[\jot]\end{array}\right.$}}
\text{$T$ is invertible on $W^{2,q}(\mR^n, \mR^N)$, and }\label{tboundedinvertible_a2} \\
\text{$G \in W^{2,q}$.} \label{gwoneq_a2}
\end{gather}
Since also the conditions in (a) hold, we have $v = T^{-1}(G)$ and thus (b) follows immediately from these claims upon varying $x_0$. To prove \eqref{tboundedinvertible_a2}, note that Lemma \ref{lemma_v_representation_formula} gives 
\[
\norm{E_{x_0} w}_{W^{2,q}} \leq C \norm{w}_{L^q}, \qquad w \in L^q.
\]
Combining this with the pointwise multiplier property 
\[
\norm{au}_{W^{1,q}} \leq \norm{au}_{L^q} + \norm{\nabla(au)}_{L^q} \leq C (\norm{a}_{L^{\infty}} + \norm{\nabla a}_{L^t}) \norm{u}_{W^{1,q}}
\]
by using the assumption on $t$ (if $p=n$ this holds for $a$ supported in $B(0,2)$), we get 
\begin{align*}
\norm{E_{x_0} Q_{x_0} w}_{W^{2,q}} &\leq C \sum_{l=1}^n \norm{\chi_2(\tilde{A}^{lm}-\tilde{A}^{lm}(0)) D_m w}_{W^{1,q}} \\
 &\leq C o_{r\to 0}(1) \norm{w}_{W^{2,q}}
\end{align*}
using the fact that $A^{lm} \in C^0_{\mathrm{loc}} \cap W^{1,t}_{\mathrm{loc}}$. This shows \eqref{tboundedinvertible_a2} by Neumann series. To prove \eqref{gwoneq_a2}, consider first the case $1 < q < n$. Then $L^q \subset W^{-1,q^*}$, so $u \in W^{1,q^*}_{\mathrm{loc}}$ by (a). Since $L^n, L^{q^*} \subset L^q$, we get $F \in L^q$ and therefore $G \in W^{2,q}$ by the mapping properties of $E_{x_0}$. In the case $n \leq q < \infty$ we get $L^q_{\mathrm{loc}} \subset W^{-1,p}_{\mathrm{loc}}$ for all $p < \infty$, thus $u \in W^{1,p}_{\mathrm{loc}}$ for all $p < \infty$ by (a), and $u \in L^{\infty}$ by Sobolev embedding. The assumption $A^{lm} \in C^0_{\mathrm{loc}} \cap W^{1,t}_{\mathrm{loc}}$ gives $F \in L^q$ and $G \in W^{2,q}$. This proves \eqref{gwoneq_a2} and also (b).

Assume next the conditions in (c). It is enough to prove that for $r > 0$ small, 
\begin{gather}
\smash{\raisebox{-\dimexpr.5\normalbaselineskip+.5\jot}{$%
            \left\{ \begin{array}{@{}c@{}}\\[\jot]\\[\jot]\end{array}\right.$}}
\text{$T$ is invertible on $C^{1+\alpha}(\mR^n, \mR^N)$, and }\label{tboundedinvertible_c} \\
\text{$G \in C^{1+\alpha}$.} \label{gwoneq_c}
\end{gather}
The main point is that $\abs{A^{lm}(y)-A^{lm}(z)} \leq M \abs{y-z}^{\alpha}$, so we have 
\[
\norm{\chi_2 (\tilde{A}^{lm} - \tilde{A}^{lm}(0))}_{C^{\alpha}} \leq C r^{\alpha}
\]
and consequently 
\begin{align*}
\norm{E_{x_0} Q_{x_0} w}_{C^{1+\alpha}} &\leq C \sum_{l=1}^n\norm{\chi_2(\tilde{A}^{lm}-\tilde{A}^{lm}(0)) D_m w}_{C^{\alpha}} \\
 &\leq C r^{\alpha} \norm{w}_{C^{1+\alpha}}.
\end{align*}
This proves \eqref{tboundedinvertible_c} by Neumann series if $r > 0$ is small enough. To prove \eqref{gwoneq_c} one first notes that $f \in W^{-1,q}_{\mathrm{loc}}$ for all $q$, hence $u \in W^{1,q}_{\mathrm{loc}}$ for all $q$ by (a) and $u \in C^{\beta}_{\mathrm{loc}}$ for $0 < \beta < 1$ by Sobolev embedding. Thus $F \in C^{-1+\alpha}$, so that $G \in C^{1+\alpha}$ by the mapping properties of $E_{x_0}$.

Finally, assume the conditions in (d). We do the proof by induction on $k$. If $k=0$ the result follows from (c). Assume the result holds smoothness indices $\leq k-1$ where $k \geq 1$, and assume that 
\[
A^{lm} \in C^{k+\alpha}_{\mathrm{loc}}, \qquad f \in C^{k-1+\alpha}_{\mathrm{loc}}.
\]
By induction hypothesis, $u \in C^{k+\beta}_{\mathrm{loc}}$ for $\beta < 1$. Also, since $k \geq 1$, part (b) implies that $u \in W^{2,q}_{\mathrm{loc}}$ for any $q < \infty$. Hence it is possible to differentiate the equation once to get 
\[
D_l(A^{lm} D_m(D_j u)) = D_j f - D_l ( (D_j A^{lm}) D_m u ).
\]
The right hand side is in $C^{k-2+\alpha}_{\mathrm{loc}}$, hence the induction hypothesis gives $D_j u \in C^{k+\alpha}_{\mathrm{loc}}$ for all $j$. This proves that $u \in C^{k+1+\alpha}_{\mathrm{loc}}$.
\end{proof}

\bibliographystyle{alpha}

\begin{thebibliography}{HKM93}
\bibitem[Au88]{Aubin} T. Aubin, \textit{Some nonlinear problems in {R}iemannian geometry}. Springer Monographs in Mathematics, Springer-Verlag, 1998.
\bibitem[BBR12]{BadrBernicotRuss} N. Badr, F. Bernicot, E. Russ, \textit{{Algebra properties for Sobolev spaces -- applications to semilinear PDE's on manifolds}}, J. d'Analyse Math. \textbf{118} (2012), 509--544.
\bibitem[BL76]{BerghLofstrom} J.\ Bergh, J.\ L\"ofstr\"om, \textit{{Interpolation spaces}}. Springer-Verlag, Berlin, 1976.
\bibitem[Be87]{Besse} A. L. Besse, \textit{Einstein manifolds}. Springer-Verlag, 1987.
\bibitem[CH70]{Calabi} E. Calabi and P. Hartman, \textit{On the smoothness of isometries}, Duke Math. J., \textbf{37} (1970), 741--750.
\bibitem[CL15]{CapognaLeDonne} L. Capogna, E. Le Donne, \textit{{Smoothness of subRiemannian isometries}}, Amer. J. Math. (to appear).
\bibitem[CK04]{ChowKnopf} B. Chow, D. Knopf, \textit{{The Ricci flow: an introduction}}. AMS, 2004.
\bibitem[DK81]{DeTurck_Kazdan} D. DeTurck and J. Kazdan, \textit{Some regularity theorems in Riemannian geometry}, Ann. Scient. Ec. Norm. Sup., 4 serie, \textbf{14} (1981), 249--260.
\bibitem[DiB83]{DiBenedetto} E. DiBenedetto, \textit{{$C^{1+\alpha }$} local regularity of weak solutions of degenerate elliptic equations}, Nonlinear Anal., \textbf{7}, (1983), 827--850.
\bibitem[DiF96]{DiFazio} G.\ Di Fazio, 
\textit{{$L^p$ estimates for divergence form elliptic equations with discontinuous coefficients}}, Boll. Un. Mat. Ital. A {\bf 10} (1996), 409--420.
\bibitem[Ei16]{Einstein} A.\ Einstein, \textit{{N\"aherungsweise Integration der Feldgleichungen der Gravitation}}, S.-B. Preuss. Akad. Wiss. (1916), 688--696.
\bibitem[Gi03]{Giusti} E. Giusti, \textit{Direct methods in the calculus of variations.} World Scientific Publishing Co., Inc., River Edge, NJ, 2003.
\bibitem[GL91]{GrahamLee} C.R.\ Graham. J.M.\ Lee, \textit{{Einstein metrics with prescribed conformal infinity on the ball}}, Adv. Math. \textbf{87} (1991), 186--225.
\bibitem[Gi83]{Giaquinta} M. Giaquinta, \textit{{Multiple integrals in the calculus of variations and nonlinear elliptic systems}}. Annals of Mathematics Studies \textbf{105}, Princeton University Press, Princeton, New Jersey, 1983.
\bibitem[GM12]{GM} M. Giaquinta, L. Martinazzi, \textit{{An introduction to the regularity theory for elliptic systems, harmonic maps and minimal graphs}}. Publications of the Scuola Normale Superiore, 2nd edition, 2012.
\bibitem[HKM93]{HKM} J. Heinonen, T. Kilpel\"ainen, O. Martio, \textit{Nonlinear potential theory of degenerate elliptic equations}. Oxford Science Publications, Clarendon Press, Oxford, 1993.
\bibitem[Iw82]{Iwaniec_thesis} T. Iwaniec, \textit{Regularity theorems for solutions of partial differential equations for quasiconformal mappings in several dimensions}. Dissertationes Math. (Rozprawy Mat.), \textbf{198}, 1982.
\bibitem[IM01]{Iwaniec} T. Iwaniec, G. Martin, \textit{Geometric function theory and non-linear analysis}. Oxford University Press, 2001.
\bibitem[KP88]{KatoPonce} T. Kato, G. Ponce, \textit{Commutator estimates and the Euler and Navier-Stokes equations}, Comm. Pure Appl. Math. \textbf{41} (1988), 891--907.
\bibitem[Le76]{Ferrand} J. Lelong-Ferrand, \textit{Geometrical interpretation of scalar curvature and regularity of conformal homeomorphisms}, Differential Geometry and Relativity (M. Caheu and M. Flato, eds.), D. Reidel, Dordrecht, 1976, pp. 91--105.
\bibitem[Li13]{dissertation} T. Liimatainen, Doctoral dissertation. Aalto University, ISBN 978-952-60-5033-1, 2013.
\bibitem[LS14]{LS_MRL} T. Liimatainen, M. Salo, \textit{$n$-harmonic coordinates and the regularity of conformal mappings},  Math. Res. Lett. \textbf{21} (2014), 341--361.
\bibitem[LS15]{LS_Bach} T. Liimatainen, M. Salo, \textit{Local gauge conditions for ellipticity in conformal geometry} (2013), arXiv:1310.3666.
\bibitem[Lin06]{Lindqvist}
P. Lindqvist, \emph{{Notes on the $p$-Laplace equation}}, Rep. Univ. Jyv\"askyl\"a Dept. Math. Stat. \textbf{102} (2006), http://www.math.ntnu.no/{{\raise.17ex\hbox{$\scriptstyle\sim$}}}lqvist/p-laplace.pdf.
\bibitem[Ma86]{Manfredi} J. Manfredi, \textit{Regularity of the gradient for a class of nonlinear possibly degenerate elliptic equations}. PhD thesis, Washington University, 1986.
\bibitem[Ma14]{Martin} G.\ Martin, \textit{{The theory of quasiconformal mappings in higher dimension, I}}, in A. Papadopoulos (ed.), Handbook of Teichm{\"u}ller theory, vol. IV, pp. 619--677, European Mathematical Society, 2014.
\bibitem[Me63]{Meyers} N.G.\ Meyers, \textit{{An $L^p$-estimate for the gradient of solutions of second order elliptic divergence equations}}, Ann. Scuola Norm. Sup. Pisa (3) {\bf 17} (1963), 189--206.
\bibitem[Se64]{Serrin} J. Serrin, \textit{Local behavior of solutions of quasi-linear equations}, Acta Math., \textbf{111} (1964), 247--302.
\bibitem[Ta06]{Taylor} M.E. Taylor, \textit{Existence and regularity of isometries}, Trans. Amer. Math. Soc. \textbf{358} (2006), 2414--2423.
\bibitem[Ta96]{Taylor3} M.E. Taylor, \textit{Partial differential equations III}. Springer-Verlag, 1996.
\bibitem[Ta00]{TaylorToolsForPDE} M.E. Taylor, \textit{Tools for {PDE}}. American Mathematical Society, 2000.
\bibitem[Tr83]{Triebel} H. Triebel, \textit{Theory of function spaces}. Birkh\"auser, Basel, 1983.

\end{thebibliography}

\end{document}